\newcommand{\e}{\varepsilon}
\newcommand{\p}{\varphi}
\newcommand{\C}{\mathcal{C}}
\newcommand{\N}{\mathcal{N}}
\renewcommand{\S}{\mathcal{S}}
\newcommand{\F}{\mathbb{F}}
\newcommand{\FF}{\overline{\F}}
\renewcommand{\:}{\colon}
\renewcommand{\mod}[1]{\mathrm{ \ } (\mathrm{mod\ } #1)}
\newcommand{\im}{\operatorname{im}}
\newcommand{\<}{\langle}
\renewcommand{\>}{\rangle}
\renewcommand{\leq}{\leqslant}
\newcommand{\leqn}{\trianglelefteqslant}
\renewcommand{\geq}{\geqslant}
\newcommand{\soc}{\operatorname{soc}}
\newcommand{\Aut}{\operatorname{Aut}}
\newcommand{\Out}{\operatorname{Out}}
\newcommand{\Sym}{\operatorname{Sym}}
\newcommand{\Frat}{\operatorname{Frat}}
\newcommand{\End}{\operatorname{End}}
\newcommand{\fix}{\operatorname{fix}}
\newcommand{\AGL}{\operatorname{AGL}}
\newcommand{\GL}{\operatorname{GL}}
\newcommand{\PSL}{\operatorname{PSL}}
\newcommand{\PGL}{\operatorname{PGL}}
\newcommand{\Sp}{\operatorname{Sp}}
\newcommand{\PSp}{\operatorname{PSp}}
\newcommand{\PSU}{\operatorname{PSU}}
\newcommand{\POm}{\operatorname{P}\!\Omega}
\newcommand{\Spin}{\operatorname{Spin}}
\newtheoremstyle{shdefinition}{6pt}{3pt}{}{}{\bfseries\boldmath}{.}{0.3em}{} 
\newtheoremstyle{shplain}{6pt}{3pt}{\itshape}{}{\bfseries\boldmath}{.}{0.3em}{} 
\theoremstyle{shdefinition}
\newtheorem{definition}{Definition}[section]
\newtheorem{shdefinition}{Definition}
\newtheorem{remark}[definition]{Remark}
\newtheorem{shremark}[shdefinition]{Remark}
\newtheorem{example}[definition]{Example}
\newtheorem*{notation*}{Notation}
\newtheorem*{acknowledgements*}{Acknowledgements}
\theoremstyle{shplain}
\newtheorem{theorem}[definition]{Theorem}
\newtheorem{shtheorem}[shdefinition]{Theorem}
\newtheorem*{theorem*}{Theorem}
\newtheorem{shconjecture}[shdefinition]{Conjecture}
\newtheorem{corollary}[definition]{Corollary}
\newtheorem{shcorollary}[shdefinition]{Corollary}
\newtheorem{proposition}[definition]{Proposition}
\newtheorem{lemma}[definition]{Lemma}
\newtheorem{hypothesis}[definition]{Hypothesis}
\newtheorem*{shtheoremstar}{Theorem~\ref{thm:main}*}
\titlespacing*{\section}{0pt}{\baselineskip}{0pt}
\titlespacing*{\subsection}{0pt}{0.66\baselineskip}{0pt}
\setlist{leftmargin=0.8cm,topsep=0pt,itemsep=-2pt}
\setlist[enumerate]{label=\rm{(\roman*)}}
\numberwithin{equation}{section}
\renewenvironment{thebibliography}[1]
{ \begin{oldthebibliography}{#1}
  \setlength{\parskip}{0pt}
  \setlength{\itemsep}{2pt plus 0.3ex}
  \bgroup\footnotesize }
{ \egroup \end{oldthebibliography} }
\renewenvironment{proof}[1][\proofname]{\par
  \pushQED{\qed}%
  \normalfont
  \topsep2pt \partopsep1pt 
  \trivlist
  \item[\hskip\labelsep
        \itshape
    #1\@addpunct{.}]\ignorespaces
}{%
  \popQED\endtrivlist\@endpefalse
  \addvspace{6pt plus 6pt}
}
\g@addto@macro\normalsize{%
  \setlength\abovedisplayskip{0.4\baselineskip plus 0.4\baselineskip}
  \setlength\belowdisplayskip{0.4\baselineskip plus 0.4\baselineskip}
  \setlength\abovedisplayshortskip{-0.3\baselineskip}
  \setlength\belowdisplayshortskip{0.4\baselineskip plus 0.4\baselineskip}
}
\def\blfootnote{\gdef\@thefnmark{}\@footnotetext} \makeatother
\begin{document}

\begin{center} 
{\LARGE \textbf{Derangements in intransitive groups}} \\[11pt]
{\Large David Ellis \& Scott Harper}                  \\[22pt]
\end{center}

\begin{center}
\begin{minipage}{0.866\textwidth}
\small
Let $G$ be a nontrivial permutation group of degree $n$. If $G$ is transitive, then a theorem of Jordan states that $G$ has a derangement. Equivalently, a finite group is never the union of conjugates of a proper subgroup. If $G$ is intransitive, then $G$ may fail to have a derangement, and this can happen even if $G$ has only two orbits, both of which have size $(1/2+o(1))n$. However, we conjecture that if $G$ has two orbits of size exactly $n/2$ then $G$ does have a derangement, and we prove this conjecture when $G$ acts primitively on at least one of the orbits. Equivalently, we conjecture that a finite group is never the union of conjugates of two proper subgroups of the same order, and we prove this conjecture when at least one of the subgroups is maximal. (Feldman also implicitly raised this conjecture on StackExchange.) We also prove the conjecture for soluble groups, almost simple groups and groups of order at most 50000, and we reduce the conjecture to perfect groups. Along the way, we prove a linear variant on Isbell's conjecture regarding derangements of prime-power order, and we highlight connections with intersecting families of permutations and roots of polynomials modulo primes. 
\end{minipage}
\end{center}

\section{Introduction} \label{s:intro}

Let $G \leq \Sym(n)$ be a nontrivial permutation group. By an 1872 theorem of Jordan \cite{ref:Jordan72}, if $G$ is transitive then it contains a \emph{derangement}, i.e. a fixed-point-free permutation. Equivalently, a finite group is never the union of the conjugates of a proper subgroup. As highlighted by Serre \cite{ref:Serre03}, this elementary group theoretic fact has consequences for number theory, topology and character theory. There are now numerous generalisations of this classical result, and we refer the reader to the introductory chapter of \cite{ref:BurnessGiudici16} for an overview.

The hypothesis that $G$ is finite is necessary for Jordan's theorem. Indeed, every element of $G = \GL_n(\mathbb{C})$ is conjugate to an upper triangular matrix, so $G$ is the union of conjugates of the subgroup of $G$ consisting of all upper triangular matrices. More generally, a connected linear algebraic group over an algebraically closed field is the union of conjugates of a Borel subgroup. Every transitive action of $G$ has a derangement if and only if $G$ has an invariable generating set, and much recent work has focussed on determining which infinite groups do, beginning with Kantor, Lubotzky and Shalev in \cite{ref:KantorLubotzkyShalev15}.

The hypothesis that $G$ is transitive is also necessary. For a trivial example, every element of $G = \Sym(n-1) \leq \Sym(n)$ fixes the point $n$, so $G$ has no derangements. Moreover, when $G \leq \Sym(n)$ has no derangements, the size of the smallest orbit of $G$ can be arbitrarily close to $\frac{n}{2}$ (see Remark~\ref{rem:coverings}(ii) below), but we conjecture that it can never equal $\frac{n}{2}$.

\begin{shconjecture} \label{conj:main}
Let $G \leq \Sym(n)$ have two orbits of size $\frac{n}{2} > 1$. Then $G$ contains a derangement.
\end{shconjecture}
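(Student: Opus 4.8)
The plan is to pass to the equivalent group-theoretic statement. Write $m=\tfrac12|\Omega|$, let $H_i\leq G$ be a point stabiliser of the action on the $i$-th orbit $\Omega_i$, so that $|G:H_1|=|G:H_2|=m$ and hence $|H_1|=|H_2|$, and note that the no-derangement hypothesis says precisely $G=\bigcup_{g\in G}H_1^{\,g}\cup\bigcup_{g\in G}H_2^{\,g}$; the task is to show this is impossible. First I would reduce to the faithful case by factoring out $\mathrm{Core}_G(H_1)\cap\mathrm{Core}_G(H_2)$, the kernel of the $\Omega$-action, which preserves all the hypotheses; then dispose of the case $\Omega_1\cong\Omega_2$ as $G$-sets using Jordan's theorem; and settle $m\leq 2$ by hand. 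For the case announced in the abstract one assumes in addition that $H_1$ is maximal, i.e.\ that $G$ is primitive on $\Omega_1$.

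Next I would set up the partition $G=\mathrm{Der}_1\sqcup\mathrm{Der}_2\sqcup C$, where $\mathrm{Der}_i$ is the set of derangements of the action on $\Omega_i$ and $C$ is the set of elements with a fixed point in each orbit; all three pieces are unions of conjugacy classes, closed under coprime powers. Two elementary facts drive the reduction: $|\mathrm{Der}_i|\geq|G|/m$ by the Cameron--Cohen bound, and every element of prime order $p$ with $p\nmid m$ lies in $C$ (its orbits on $\Omega_i$ have size $1$ or $p$, and $p\nmid|\Omega_i|$ then forces a fixed point). So the obstruction concentrates on primes dividing $m$, and the goal becomes: exhibit a single element of prime-power order that is a derangement on $\Omega_1$ \emph{and} on $\Omega_2$ simultaneously, contradicting $C\neq\varnothing$. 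These bounds, together with the covering estimate $|\bigcup_g H_i^{\,g}|\leq m(|H_i|-1)+1$, eliminate some degenerate configurations, but, as Example~\ref{ex:main} already warns, cannot finish the problem.

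For the primitive case I would run the O'Nan--Scott dichotomy on $(G,\Omega_1)$, using in each type the available control of the fixed-point sets of $p$-elements: CFSG-based fixity and minimal-degree bounds together with the classification of maximal subgroups in the almost simple case, and explicit structure in the affine, product and diagonal cases. For a well-chosen prime $p\mid m$ one produces a $p$-element fixed-point-free on $\Omega_1$, and then, using $|H_2|=|H_1|$ (so the $\Omega_2$-action is ``just as large'') together with a linear variant of Isbell's conjecture on derangements of $p$-power order, forces the same elements to be fixed-point-free on $\Omega_2$ as well. A complementary route worth trying is induction: restricting $H_1$ to $\Omega_2$, its orbits have size $\geq2$, and if there are exactly two of equal size one is back to a smaller instance of the conjecture, with other orbit patterns handled by the counting above. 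The affine groups behind Example~\ref{ex:main} are the extremal configurations, and the argument must explain why insisting the two orbits have \emph{equal} length is precisely what breaks them.

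The hard part will be the $\Omega_2$-side: nothing is assumed about that action beyond transitivity and $|H_2|=|H_1|$, so all leverage there must come from that index equality and the identity $|\mathrm{Der}_1|+|\mathrm{Der}_2|+|C|=|G|$, and transporting a $p$-power-order derangement from $\Omega_1$ to $\Omega_2$ is exactly where the Fein--Kantor--Schacher theorem (prime-power derangements for a \emph{single} transitive action) falls short and the new linear Isbell-type input is required. I also expect the product-type and twisted-diagonal O'Nan--Scott cases, and a few almost simple groups of exceptionally small fixity, to need separate and somewhat delicate treatment.
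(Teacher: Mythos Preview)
The statement you are attempting is Conjecture~\ref{conj:main}, which the paper leaves \emph{open}: there is no proof of it in the paper to compare against. The paper establishes only the special cases recorded in Corollary~\ref{cor:main}, Corollary~\ref{cor:prime_power} and Remark~\ref{rem:main}. Your proposal is likewise not a proof but a plan with explicitly acknowledged gaps (``the hard part will be the $\Omega_2$-side''; the product and diagonal types ``need separate and somewhat delicate treatment''), so the full statement remains unproved by either you or the paper.

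Restricting to the primitive case, your outline broadly matches the paper (O'Nan--Scott plus the linear Isbell-type theorem for affine type), but you miss a key reduction. You keep $|H_1|=|H_2|$ throughout and treat the second action as an arbitrary transitive one, which is exactly why the $\Omega_2$-side stays intractable. The paper instead replaces the non-primitive stabiliser by a maximal overgroup, trading $|H_1|=|H_2|$ for the weaker condition that $|G:M_1|$ divides $|G:M_2|$; a short coset-averaging argument (Lemma~\ref{lem:technical}) then disposes of the case where one kernel is transitive on the other orbit, reducing to Theorem~\ref{thm:main}* in which \emph{both} actions are primitive. Without reaching that doubly-primitive situation there is no handle on $\Omega_2$. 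Two further discrepancies: the almost simple case is settled not by fixity or minimal-degree bounds but by the Bubboloni--Spiga--Weigel classification of weak normal $2$-coverings of simple groups, and your $\mathrm{Der}_1\sqcup\mathrm{Der}_2\sqcup C$ partition together with the Cameron--Cohen bound plays no role in the paper's arguments. Finally, as Remark~\ref{rem:reduction}\ref{rem:reduction_warning} warns, two subgroups of equal order need not sit inside maximal subgroups of equal order, so there is no reduction of the full conjecture to the primitive case, and nothing in your plan addresses the genuinely open part.
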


Conjecture~\ref{conj:main} asserts that no finite group is the union of conjugates of two proper subgroups of the same order (see Remark~\ref{rem:coverings}). Feldman \cite{ref:Feldman14} also asked this on StackExchange.

We now state our main theorem, which implies an important special case of Conjecture~\ref{conj:main}.

\begin{shtheorem} \label{thm:main}
Let $G \leq \Sym(n)$ have exactly two orbits $\Omega_1$ and $\Omega_2$, both of which are nontrivial. Assume that $|\Omega_1|$ divides $|\Omega_2|$ and that $G$ acts primitively on $\Omega_2$. Then $G$ contains a derangement.
\end{shtheorem}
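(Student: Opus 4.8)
Suppose for contradiction that $G \leq \Sym(\Omega_1 \cup \Omega_2)$ has no derangement, with $G$ transitive on each $\Omega_i$, with $G$ primitive on $\Omega_2$, and with $|\Omega_1| = m$ dividing $|\Omega_2| = n$. I'll pick a point $\alpha \in \Omega_1$ with stabiliser $H = G_\alpha$, so $|G:H| = m$, and a point $\beta \in \Omega_2$ with stabiliser $K = G_\beta$, so $|G:K| = n$ and $K$ is maximal in $G$. The no-derangement hypothesis says precisely that $G = \bigcup_{g} H^g \cup \bigcup_g K^g$: every element of $G$ is conjugate into $H$ or into $K$. The goal is to derive a contradiction, presumably showing $H = G$ or $K = G$.

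**The counting / permutation-character argument.** The natural tool is the permutation character. For a transitive action of degree $d$ with point stabiliser $L$, the number of elements of $G$ lying in some conjugate of $L$ is at most $|G|(1 - 1/d) + |\,\text{(something)}\,|$; more precisely, by inclusion-exclusion on the fixed-point-free elements, the proportion of $G$ conjugate into $L$ is $1 - \delta_L$ where $\delta_L > 0$ is the proportion of derangements (positive by Jordan). Writing $\pi_1, \pi_2$ for the permutation characters of $G$ on $\Omega_1, \Omega_2$, the no-derangement condition forces $\{g : \pi_1(g) = 0\} \subseteq \{g : \pi_2(g) \neq 0\}$. I would first quantify this: the number of $g$ with $\pi_2(g) = 0$ is at least $|G|/n$ by a theorem of Cameron–Cohen type (a primitive group of degree $n$ has at least $|G|/n$ derangements, in fact the fraction of derangements is at least $1/n$ with equality characterised), while the number of $g$ with $\pi_1(g) = 0$ is at most $|G|(1 - 1/m)$. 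Since every $\pi_2$-derangement must be a $\pi_1$-non-derangement, and using $m \mid n$, I'd try to force $m = n$ and then both actions primitive (or at least compare stabilisers), reducing to the case $|\Omega_1| = |\Omega_2|$.

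**Exploiting primitivity and the divisibility $m \mid n$.** With $K$ maximal, the key structural input is: the sets $H^g$ for $g \in G$ cover $G \setminus \bigcup_g K^g$. Consider the action of $G$ on $\Omega_1 \times \Omega_2$ — or rather, I would look at how $K$ meets the conjugates of $H$. Since $K$ is maximal, either $K \leq H^g$ for some $g$ (impossible as $|K| = |G|/n \leq |G|/m = |H|$ would need $n \geq m$, fine, but $K \leq H^g$ with $K$ maximal forces $H^g = G$ or $H^g = K$, i.e. $m = 1$ or $m = n$), or $\langle K, H^g\rangle = G$ for all $g$. In the latter case a Frobenius-type / Neumann-type counting on the double cosets $K \backslash G / H$ should yield that the union $\bigcup_g (H^g \cap K)$ is a proper subset of $K$ of controlled size, so $K$ itself contains elements lying in no conjugate of $H$ — these are $\pi_1$-derangements inside $K$, which must then be $\pi_2$-non-derangements, i.e. conjugate into $K$; iterating or localising this inside $K$ (which is a smaller group, enabling induction on $|G|$) should be the engine. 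The divisibility hypothesis $m \mid n$ is what I expect makes the arithmetic close: it should rule out the "near-miss" covering and, combined with the bound that a transitive group of degree $m$ has more than $|G|/m \cdot$ (derangement fraction) derangements, pin things down.

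**Main obstacle.** The hard part will be handling the case where $G$ is primitive on $\Omega_2$ but $\Omega_1$ is small or the action on $\Omega_1$ is far from primitive — then $H$ is large, $\bigcup_g H^g$ covers a large fraction of $G$, and the crude counting bound "$\text{derangements on }\Omega_2 \geq |G|/n$" may not beat "$\text{non-derangements on }\Omega_1 \geq |G|/m$" when $m$ is small relative to $n$. Here I anticipate needing the finer structure of primitive groups — the O'Nan–Scott theorem and the sharpened derangement counts of Fulman–Guralnick (proportion of derangements in a primitive group is bounded below by an absolute constant, away from a short list of exceptions) — to show that a primitive $G$ of degree $n$ has enough derangements, of enough distinct cycle types, that they cannot all be swallowed by the conjugates of a single subgroup of index $m \mid n$ unless $m \in \{1, n\}$. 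The examples in the paper (with $G = \AGL_1(p)$ and orbit sizes $p, p-1$) show the divisibility hypothesis is doing real work, so the proof must genuinely use $m \mid n$ and not merely $m \leq n$; I'd expect the tight case to be $m = n$ with both actions primitive, which is likely where the bulk of the casework — invoking the classification — resides.
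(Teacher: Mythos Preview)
Your proposal is not a proof but an outline of a counting strategy that you yourself flag as insufficient in the final paragraph, and the gap is real. The Cameron--Cohen bound (proportion of derangements $\geq 1/n$) and even the Fulman--Guralnick absolute constant cannot rule out that \emph{all} derangements on $\Omega_2$ lie in $\bigcup_g H^g$: when $m$ is small compared to $n$, the conjugates of $H$ may cover almost all of $G$, and there is no counting reason why the $\Omega_2$-derangements should escape. Your double-coset\,/\,induction sketch inside $K$ never uses the divisibility $m \mid n$ in a concrete way, and the paper's own Example~\ref{ex:example} (a group $2^3.A_4$ covered by conjugates of subgroups of orders $24$ and $8$) shows that divisibility plus counting is genuinely not enough: primitivity on $\Omega_2$ must be used structurally, not merely through derangement density.

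The paper's route is entirely different. It first \emph{enlarges} the action on $\Omega_1$ by replacing $H$ with a maximal overgroup $M_1$, then passes to the quotient by the kernel on $\Delta_1 \cup \Delta_2$. A short lemma (Lemma~\ref{lem:technical}), powered by the Fein--Kantor--Schacher theorem on derangements of prime-power order together with the observation that $|\Delta_1| \mid |\Delta_2|$ forces no prime divisor of $|\Delta_1|$ to divide $|\Delta_2|-1$, disposes of the case where the kernel on $\Delta_1$ is transitive on $\Delta_2$; primitivity then makes this kernel trivial, reducing to $G$ faithful and primitive on \emph{both} sets. That reduced statement (Theorem~\ref{thm:main}*) is where the real work lies, and it is done by an O'Nan--Scott case split: the almost simple case via the Bubboloni--Spiga--Weigel classification of weak normal $2$-coverings; the affine case via a substantial new ingredient (Theorem~\ref{thm:isbell}, a linear Isbell-type result proved through a Harper--Liebeck representation-theoretic theorem and detailed Lie-type analysis); and the remaining product/diagonal types by exhibiting explicit derangements in the socle. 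None of this is reachable from permutation-character inequalities.
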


Equivalently, Theorem~\ref{thm:main} asserts that a finite group is not the union of conjugates of a proper subgroup $H_1$ and a maximal subgroup $H_2$ such that $|H_2|$ divides $|H_1|$. The assumption that $G$ is primitive on $\Omega_2$ (equivalently, that $H_2$ is maximal) is necessary, see Remark~\ref{rem:reduction}\ref{rem:reduction_example}. 

We now state some corollaries of Theorem~\ref{thm:main}. The first, which is an immediate consequence of Theorem~\ref{thm:main}, establishes Conjecture~\ref{conj:main} in an important special case.

\begin{shcorollary} \label{cor:main}
Let $G \leq \Sym(n)$ have two orbits of size $\frac{n}{2} > 1$. Assume that $G$ acts primitively on at least one of its orbits. Then $G$ contains a derangement.
\end{shcorollary}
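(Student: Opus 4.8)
The plan is simply to invoke Theorem~\ref{thm:main}, after checking that its hypotheses hold. Set $n = |\Omega|$ and let $\Omega_1, \Omega_2$ denote the two orbits of $G$, so that $|\Omega_1| = |\Omega_2| = n/2$. Since $\Omega = \Omega_1 \cup \Omega_2$ is a disjoint union and $|\Omega_1| + |\Omega_2| = n = |\Omega|$, these are all the orbits of $G$, so $G$ has exactly two orbits; each is nontrivial because $|\Omega_i| = n/2 > 1$ by hypothesis; and, trivially, $|\Omega_1|$ divides $|\Omega_2|$ as the two are equal. By assumption $G$ acts primitively on at least one of the two orbits, and we are free to relabel so that this orbit is $\Omega_2$, the divisibility being unaffected by the relabelling since $|\Omega_1| = |\Omega_2|$. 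Now every hypothesis of Theorem~\ref{thm:main} is satisfied, so $G$ contains a derangement on $\Omega$.

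Thus the corollary is exactly the special case of Theorem~\ref{thm:main} in which the two orbits have equal size, and all of the genuine content is carried by Theorem~\ref{thm:main} itself. There is essentially no obstacle. The only point meriting a moment's care is that Theorem~\ref{thm:main} is not symmetric in $\Omega_1$ and $\Omega_2$ — it demands primitivity on $\Omega_2$ and the divisibility $|\Omega_1| \mid |\Omega_2|$ — so one must be a little careful to match the orbit on which $G$ acts primitively with the $\Omega_2$ slot of the theorem; the equality of orbit sizes is what makes this costless.

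Equivalently, on the group-theoretic side (cf. Remark~\ref{rem:coverings}), this is the assertion that a finite group is never the union of conjugates of two proper subgroups of equal order, at least one of which is maximal: taking the maximal one to be $H_2$ and the other to be $H_1$, the equality $|H_1| = |H_2|$ supplies the divisibility condition, and the translated form of Theorem~\ref{thm:main} applies verbatim.
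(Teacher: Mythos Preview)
Your proof is correct and matches the paper's own treatment: the paper states that Corollary~\ref{cor:main} is ``an immediate consequence of Theorem~\ref{thm:main}'' and gives no further argument, which is exactly what you do after verifying the hypotheses and relabelling so that the primitive orbit is $\Omega_2$.
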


We give the short proof of our second corollary at the end of Section~\ref{s:proofs}.

\begin{shcorollary} \label{cor:prime_power}
Let $G \leq \Sym(n)$ have exactly two orbits $\Omega_1$ and $\Omega_2$, both of which are nontrivial. Assume that $|\Omega_1|$ and $|\Omega_2|$ are powers of the same prime. Then $G$ contains a derangement. In particular, Conjecture~\ref{conj:main} is true when $\frac{n}{2}$ is a prime power.
\end{shcorollary}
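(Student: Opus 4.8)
The plan is to deduce this from Theorem~\ref{thm:main} by a reduction argument. Say $|\Omega_1| = p^a$ and $|\Omega_2| = p^b$ with $a \le b$; then $|\Omega_1|$ divides $|\Omega_2|$, so if $G$ acts primitively on $\Omega_2$ we are done immediately by Theorem~\ref{thm:main}. The content is therefore to handle the case where $G$ acts imprimitively on $\Omega_2$ (and also on $\Omega_1$). The natural move is to pass to a quotient action on a block system. Let $\Delta$ be a maximal block system for the $G$-action on $\Omega_2$, with blocks of size $p^c$ for some $1 \le c \le b$, so that $G$ acts primitively on the set $\Omega_2/\Delta$ of $p^{b-c}$ blocks. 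I would then study the action of $G$ on $\Omega_1 \cup (\Omega_2/\Delta)$: a derangement here is an element fixing no point of $\Omega_1$ and no block in $\Delta$, which is in particular a derangement on $\Omega_1 \cup \Omega_2$, so it suffices to find a derangement for this smaller action.

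First I would dispose of the degenerate possibilities. If $G$ acts trivially on $\Omega_2/\Delta$ (i.e.\ $b - c = 0$, meaning $\Omega_2$ itself is a single block), that cannot happen since the blocks are proper; if the kernel of the action on $\Omega_2/\Delta$ together with $\Omega_1$ already forces no derangements, one argues directly. The key reduction step is: replacing $\Omega_2$ by $\Omega_2/\Delta$ keeps both orbits nontrivial (as $p^{b-c} \ge p$) and keeps the first orbit size dividing the second provided $a \le b - c$. If instead $a > b - c$, then one instead passes to a block system on $\Omega_1$ as well, or swaps the roles of the two orbits — here the crucial point is that $\Omega_1/\Delta'$ and $\Omega_2/\Delta$ can be chosen so that one of the two resulting orbit sizes divides the other, because both are powers of $p$ and one may iterate, coarsening whichever orbit is currently larger until the divisibility holds with the primitively-acted-on orbit being the larger one. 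Once we reach a configuration with two nontrivial orbits that are $p$-power sizes, one dividing the other, and $G$ primitive on the larger, Theorem~\ref{thm:main} applies and yields a derangement, which pulls back to a derangement of the original action.

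The main obstacle I anticipate is the bookkeeping in this iterative coarsening: one must ensure that at the stage where $G$ finally acts primitively on one orbit, it is the \emph{larger} orbit (whose size is divisible by the other), since Theorem~\ref{thm:main} requires $|\Omega_1|$ to divide $|\Omega_2|$ with primitivity on $\Omega_2$. Because every orbit size is a power of $p$, divisibility between the two current orbit sizes is totally ordered, so one can always coarsen the orbit with the \emph{smaller} size further (toward a primitive quotient) without destroying the divisibility relation — and if coarsening the smaller orbit would overshoot, one instead coarsens the larger orbit first. I would also need the elementary fact that a block system on one orbit of an intransitive group, combined with the trivial (point) block system on the other orbit, gives a genuine $G$-invariant partition of $\Omega$ whose quotient action has the claimed orbits; this is routine. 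A mild subtlety is that when the two orbit sizes are equal ($a = b$) and $G$ is imprimitive on both, one coarsens one of them and then has strict divisibility, so the argument still goes through.
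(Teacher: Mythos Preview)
Your approach is essentially the same as the paper's, but you have made it considerably more complicated than necessary, and the ``iterative coarsening'' in your final paragraph is muddled in a way that obscures the argument.

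The paper's proof does exactly one step: for each $i\in\{1,2\}$, choose a maximal subgroup $M_i$ of $G$ containing the point stabiliser $H_i$, and replace $\Omega_i$ by $\Delta_i=G/M_i$. Then $G$ acts primitively on \emph{both} $\Delta_1$ and $\Delta_2$, each $|\Delta_i|$ divides $|\Omega_i|$ and so is a power of $p$, and both are nontrivial since $H_i$ is proper. Because powers of $p$ are totally ordered by divisibility, after relabelling we have $|\Delta_1|\mid|\Delta_2|$, and Theorem~\ref{thm:main} applies directly to $\Delta_1\cup\Delta_2$. A derangement there pulls back to a derangement on $\Omega$.

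Your version coarsens one orbit at a time and then worries about whether the primitive orbit ends up being the larger one, leading you to an iteration whose termination you only sketch. In particular, your sentence ``one can always coarsen the orbit with the \emph{smaller} size further \dots\ without destroying the divisibility relation'' is beside the point: coarsening the smaller orbit preserves divisibility, yes, but Theorem~\ref{thm:main} needs primitivity on the \emph{larger} orbit, so this move does not advance you. The fix is simply to coarsen both orbits to primitive quotients simultaneously; no iteration or case analysis is needed.
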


Our next main theorem gives even more evidence towards Conjecture~\ref{conj:main}.

\begin{shtheorem} \label{thm:summary}
Conjecture~\ref{conj:main} is true when any of the following hold:
\begin{enumerate}
\item $G$ is soluble
\item $G$ is almost simple
\item $|G| \leq 50000$.
\end{enumerate}
\end{shtheorem}

We also have the following reduction theorem.

\begin{shtheorem} \label{thm:reduction}
To prove Conjecture~\ref{conj:main} it suffices to assume that $G$ is perfect.
\end{shtheorem}

Since this paper first appeared on the arxiv, Lee, Popiel and Verret \cite{ref:LeePopielVerret} proved that Conjecture~\ref{conj:main} also holds when $\frac{n}{2}$ is a product of two primes and when $\frac{n}{2} \leq 30$. 

Our main results and conjecture, which regard permutation groups, have interpretations in abstract group theory, graph theory, extremal combinatorics and algebraic number theory, as we explain in the following four remarks.

\begin{shremark} \label{rem:coverings}
Let $G \leq \Sym(n)$, let $\Omega_1$, \dots, $\Omega_k$ be the orbits of $G$ and let $H_i$ be the stabiliser of a point of $\Omega_i$ for each $1 \leq i \leq k$. Then $x \in G$ is a derangement if and only if $x \not\in \cup_{1 \leq i \leq k} \cup_{g \in G} H_i^g$, so $G$ has no derangements if and only if $G$ is the union of conjugates of $H_1$, \dots, $H_k$, in which case, $\{H_1, \dots, H_k\}$ is called a \emph{normal ($k$-)covering} of $G$. This observation gives the following.
\begin{enumerate}
\item Jordan's theorem is equivalent to the fact that a finite group is not the union of the conjugates of a proper subgroup.
\item A finite group can be the union of conjugates of two proper subgroups, and these subgroups can have arbitrarily close orders. For example, the affine group $\AGL_1(p) = \F_p{:}\F_p^\times$ is the union of the normal subgroup $\F_p$ and the conjugates of the complement $\F_p^\times$. There are also examples where both subgroups are core-free: if $m \geq 1$ and $q = 2^f$, then the symplectic group $\Sp_{2m}(q)$ is the union of conjugates of $H_1 = \mathrm{O}^+_{2m}(q)$ and $H_2 = \mathrm{O}^-_{2m}(q)$ (see \cite{ref:Dye79}), and here $|H_1|/|H_2| = (q^m-1)/(q^m+1)$.
\item Conjecture~\ref{conj:main} asserts that a finite group is not the union of conjugates of two proper subgroups of the same order, and Corollary~\ref{cor:main} proves this when at least one of the subgroups is maximal. This feels like the hardest case, but see the warning in Remark~\ref{rem:reduction}\ref{rem:reduction_warning}.
\item Conjecture~\ref{conj:main} vastly generalises the theorem that a finite group $G$ does not have a normal covering $\{H,H\alpha\}$ for a proper subgroup $H$ and an automorphism $\alpha$ of $G$. An argument of Jehne reduces this theorem to simple groups (see the proof of \cite[Theorem~5]{ref:Jehne77}) and simple groups were handled by Saxl \cite{ref:Saxl88} using the Classification of Finite Simple Groups. This result was the final part of the proof of the following theorem in algebraic number theory. Let $K/k$ be a quadratic extension of a number field $k$. Then $K/k$ is Kronecker equivalent to an extension $K'/k$ if and only if $K$ is $k$-isomorphic to $K'$. For further explanation, see the introduction to \cite{ref:Saxl88} and the survey \cite{ref:Klingen98}.
\end{enumerate}
\end{shremark}

\begin{shremark} \label{rem:graphs}
Permutation groups $G \leq \Sym(n)$ with two orbits of size $\frac{n}{2}$ arise naturally in graph theory. Let $\Gamma$ be a regular graph such that $\Aut(\Gamma)$ is transitive on edges. Then either $\Aut(\Gamma)$ is transitive on vertices, or $\Aut(\Gamma)$ has exactly two, equal-sized, orbits on vertices. In particular, Conjecture~\ref{conj:main} asserts that $\Aut(\Gamma)$ has a derangement. While this is open in general, it is known when $\Gamma$ is $3$- or $4$-regular by \cite[Corollary~1.4]{ref:GiudiciPotocnikVerret14} and when $\Gamma$ has fewer than $64$ vertices by a computational verification by Conder (such graphs are classified in \cite{ref:ConderVerret19}).
\end{shremark}

\begin{shremark} \label{rem:intersecting}
A set $S \subseteq \Sym(\Omega)$ is \emph{intersecting} if for all $x,y \in S$ there is $\omega \in \Omega$ with $\omega x = \omega y$. By analogy with the Erd\H{o}s--Ko--Rado theorem on intersecting families of subsets \cite{ref:ErdosKoRado61}, Frankl and Deza \cite{ref:FranklDeza77} proved that if $S \subseteq \Sym(\Omega)$ is intersecting, then $|S| \leq (|\Omega|-1)!$, and Cameron and Ku \cite{ref:CameronKu03} proved that equality holds if and only if $S$ consists of all permutations sending $\omega \in \Omega$ to $\omega' \in \Omega$. In particular, all extremal examples of intersecting \emph{subgroups} are point stabilisers, so have an orbit of size $1$. Do all intersecting subgroups of permutations have a small orbit, or perhaps even an orbit of size at most an absolute constant $C$? Nakajima \cite{ref:Nakajima} asked this with a view towards an application to constraint satisfaction problems if the question had an affirmative answer. A subgroup $G \leq \Sym(\Omega)$ is intersecting if and only if it has no derangements, so Remark~\ref{rem:coverings}(ii) negatively answers this question, but we conjecture that there are no examples whose smallest orbit has size $\frac{1}{2}|\Omega|$.
\end{shremark}

\begin{shremark} \label{rem:polynomials}
If $f \in \mathbb{Z}[X]$ has a root in $\mathbb{Z}$, then $f$ has a root modulo every positive integer~$m$, but the converse is false \cite{ref:BerendBilu96}. Let $f \in \mathbb{Z}[X]$ be a monic polynomial with no roots in $\mathbb{Z}$, and write $f = f_1 \cdots f_k$ where $f_i \in \mathbb{Q}[X]$ is irreducible with root $\alpha_i \in \overline{\mathbb{Q}}$. Then $f$ has a root modulo all but finitely many primes if and only if $\mathrm{Gal}(L/\mathbb{Q}) = \bigcup_{1 \leq i \leq k} \mathrm{Gal}(L/\mathbb{Q}(\alpha_i))$ where $L$ is the splitting field of $f$ \cite[Proposition~2.2]{ref:ElsholtzKlahnTechnau22}. Hence, Jordan's theorem implies that $f$ has no roots modulo infinitely many primes if $f$ is irreducible (see also \cite[Section~4]{ref:Serre03}). Moreover, Conjecture~\ref{conj:main} asserts that this also holds when $f = f_1f_2$ where $f_1$ and $f_2$ have the same degree, and Corollary~\ref{cor:main} proves this when $\mathbb{Q}(\alpha_1)/\mathbb{Q}$ has no proper intermediate subfields.
\end{shremark}

\begin{shremark} \label{rem:reduction}
Let us comment on the hypotheses in our main results.
\begin{enumerate}
\item Conjecture~\ref{conj:main} has no immediate reduction to primitive actions, as two subgroups of equal order need not be contained in maximal subgroups of equal order (for example, the subgroups $S_2 \times S_3$ and $A_4$ of $S_5$). This explains the additional hypothesis in Corollary~\ref{cor:main}.\label{rem:reduction_warning}
\item The hypothesis that $G$ acts primitively on $\Omega_2$ is necessary in Theorem~\ref{thm:main}, as a finite group can be the union of conjugates of proper subgroups $H_1$ and $H_2$ such that $|H_2|$ divides $|H_1|$. Indeed, Example~\ref{ex:example} gives a group $G = 2^3.A_4$ that is the union of conjugates of a maximal subgroup $H_1 \cong 2^3.C_3$ of order $24$ and a subgroup $H_2 \cong C_2 \times C_4$ of order $8$. \label{rem:reduction_example}
\end{enumerate}
\end{shremark}

Let us now outline our proofs. We first reduce Theorem~\ref{thm:main} to the case where $G$ acts faithfully and primitively on $\Omega_1$, and, using the O'Nan--Scott Theorem, we divide our analysis according to the possibilities for $G \leq \Sym(\Omega_1)$. However, even when $G \leq \Sym(\Omega_1)$ is of a fixed O'Nan--Scott type, the action of $G$ on $\Omega_2$ is an arbitrary (not necessarily faithful) primitive action, and our arguments are novel since we must keep track of how the actions of $G$ on $\Omega_1$ and $\Omega_2$ interact. We then reduce further to the case where $G \leq \Sym(\Omega_1)$ is almost simple or affine. For almost simple groups, we make use of recent work by Bubboloni, Spiga and Weigel \cite{ref:BubboloniSpigaWeigel} on normal $2$-coverings.

Affine groups require the most work, and we need a very recent representation theoretic result of Harper and Liebeck \cite{ref:HarperLiebeck25}, which generalises a result of Feit and Tits \cite{ref:FeitTits78}. (Indeed this paper was the original motivation for the work in \cite{ref:HarperLiebeck25}.) In order to apply this result, we need to carry out a series of nontrivial reductions. This is technically challenging since each of these reductions are in tension with each other and therefore need to be carried out in a certain order and often several times (see Section~\ref{ss:isbell_reduction}).

In addressing the affine groups, we also establish results which may be of independent interest. First, we obtain bounds on the smallest degrees of permutation and linear representations of simple groups $G$ in terms of the prime factorisation of $|G|$, see Section~\ref{ss:prelims_bounds}. Second, we establish a linear variant  of Isbell's Conjecture, as we now explain.

To answer a number theoretic question regarding relative Brauer groups, Fein, Kantor and Schacher \cite{ref:FeinKantorSchacher81} extended Jordan's theorem by proving that every nontrivial finite transitive permutation group has a derangement whose order is a power of a prime $p$. In contrast to Jordan's original theorem, which is an elementary counting argument, this generalisation uses the CFSG. Which prime $p$ works? Isbell's conjecture asserts that if $|\Omega| = p^ab$ for $a$ sufficiently large compared to $b$, then $G$ has a derangement whose order is a power of $p$. Motivated by a connection to fair $n$-player games, Isbell \cite{ref:Isbell60} made the conjecture for $p=2$, and the general conjecture was formulated by Cameron, Frankl and Kantor \cite[Section~1]{ref:CameronFranklKantor89}. We prove the following linear variant of Isbell's Conjecture.

\begin{shtheorem} \label{thm:isbell}
Let $G$ be a finite group acting primitively on $\Omega$. Let $p$ be a prime number and let $\rho\: G \to \GL_d(p)$ be a faithful irreducible representation. Assume that $p^d$ divides $|\Omega|$. Then $G$ contains a derangement $g$ such that $g\rho$ fixes a nonzero vector in $\F_p^d$.
\end{shtheorem}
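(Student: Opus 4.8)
The plan is to argue by contradiction: suppose that no derangement on $\Omega$ has $\rho$-image fixing a nonzero vector. Write $V = \F_p^d$ for the module afforded by $\rho$ and $M$ for the stabiliser in $G$ of a point of $\Omega$. The assumption says exactly that every element of $G$ fixing a nonzero vector of $V$ lies in $\bigcup_{g \in G} M^g$; equivalently, $G$ has the same derangements on $\Omega$ and on $\Omega \sqcup (V \setminus \{0\})$. Two easy observations set the scene. First, a nontrivial $p$-group acting on a nonzero $\F_p$-module fixes a nonzero vector, so every element of every Sylow $p$-subgroup of $G$ fixes a nonzero vector of $V$; hence our assumption forces $G$ to have no derangement of $p$-power order on $\Omega$, and this already shows that $\rho$ must be used essentially (the nonexistence of such a derangement is precisely what Isbell's conjecture forbids). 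Second, $V^{O_p(G)}$ is a nonzero $G$-submodule of the irreducible module $V$ whenever $O_p(G) \neq 1$, so faithfulness of $\rho$ gives $O_p(G) = 1$; in particular $G$ is not of affine type, and a similar argument (the kernel of the action of $G$ on $\Omega$ centralises $\soc(G)$) lets us reduce to $G$ acting faithfully on $\Omega$. Also $|G|_p \leq |\GL_d(p)|_p = p^{d(d-1)/2}$, and as $p^d \mid |\Omega|$ divides $|G|$ this forces $d \geq 3$.

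The main step is to invoke the representation-theoretic theorem of Harper and Liebeck \cite{ref:HarperLiebeck}, extending a theorem of Feit and Tits \cite{ref:FeitTits78}, together with the bounds on minimal degrees of faithful linear and permutation representations of simple groups established in Section~\ref{ss:prelims_bounds}. Because $G$ is primitive on $\Omega$ with $|\Omega| \geq p^d$ — so the permutation degree vastly exceeds $\dim V$, while $|\Omega|$ carries the large $p$-power $p^d$ — this confines $G$ to a very short list: either $\soc(G)$ is an alternating group $A_m$ (with $\Omega$ an arbitrary primitive $G$-set, and $V$ essentially the fully deleted natural permutation module, so that $d \geq m-2$), or $\soc(G)$ is a group of Lie type in the defining characteristic $p$, or $(G,\Omega,\rho)$ belongs to a short explicit list of small configurations that we settle by hand or by computer.

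It remains to dispatch these cases, and here the divisibility $p^d \mid |\Omega|$ does a lot of work. In the Lie type case, if $M$ is a parabolic subgroup then $|\Omega| = |G:M| = \sum_w q^{\ell(w)}$, a sum over minimal coset representatives with a single term equal to $1$ and all others divisible by $q$, so $|\Omega| \equiv 1 \pmod p$, contradicting $p \mid |\Omega|$; hence $M$ is non-parabolic, $M$ is then strongly constrained by Aschbacher's theorem, and one combines the rep-degree bounds of Section~\ref{ss:prelims_bounds} with standard fixed-point-ratio estimates to find a derangement whose $\rho$-image has eigenvalue $1$ — for instance a unipotent derangement, whose $\rho$-image (an element of $p$-power order in characteristic $p$) automatically has all eigenvalues equal to $1$. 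In the alternating case, $p^d \mid |\Omega|$ together with $d \geq m-2$ forces $m$ to be tiny when $p$ is odd and to have very few base-$p$ digits when $p = 2$, so the surviving configurations are highly restricted; moreover a direct computation with the fully deleted permutation module shows that $g\rho$ fails to fix a nonzero vector of $V$ only when $g$ is an $m$-cycle (plus at most one or two further classes when $p \mid m$). Since $\soc(G) = A_m$ is simple and transitive on $\Omega$, the theorem of Fulman and Guralnick (resolving the Boston--Shalev conjecture) shows that a positive proportion (bounded below by an absolute constant) of the elements of $G$ are derangements on $\Omega$ — far outnumbering the elements lying in those few excluded classes — so some derangement fixes a nonzero vector of $V$, and we are done.

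The hardest part, I expect, is twofold: confirming that the Harper--Liebeck theorem applies in exactly the generality required (controlling the interaction between primitivity on $\Omega$, the arithmetic hypothesis $p^d \mid |\Omega|$, and the O'Nan--Scott type of $G$, and pinning down the finite list of small exceptions), and then the non-parabolic Lie type subcase, where that arithmetic hypothesis must be reconciled with rather precise information about derangements and unipotent elements across the various classical and exceptional families.
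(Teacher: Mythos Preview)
Your sketch has the right endgame but a genuine gap in the reduction to the almost simple case. You write that ``a similar argument \dots lets us reduce to $G$ acting faithfully on $\Omega$'', but this is not available: the kernel $N$ of the $\Omega$-action is a normal subgroup of $G$ that need not be central (or soluble, or small), and you cannot quotient by it without destroying the \emph{faithful} representation $\rho$. The paper's proof makes this the crux of the argument. One first disposes of the O'Nan--Scott types other than (AS) and (PA) via \cite{ref:BurnessGiudiciWilson11} (your $O_p(G)=1$ observation only rules out type (HA)), and then, for (AS)/(PA), packages the situation as a carefully stated Hypothesis keeping track of the extension $G \to G/N$ together with $\rho$. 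The reduction lemma (Lemma~\ref{lem:isbell_reduction}) is substantial: it passes to a minimal supplement of $N$, restricts $\rho$, and iterates to force $N\leq\Frat(G)$, $Q=T^k$, and $\rho$ primitive. Only \emph{then} does the Harper--Liebeck theorem apply to yield $N=Z(G)$, after which a separate argument (comparing $v_p(|T|)$ with $|\GL_t(p^e)|_p$) reduces $k$ to $1$. Your proposal collapses all of this into a single appeal to \cite{ref:HarperLiebeck}, but that theorem has hypotheses (primitivity of the projective representation, $H\gamma<T^k$ for all proper $H$) that are not free; producing them is most of the work.

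A second, smaller issue: the alternating case does not survive. Once one has reduced to $G$ a perfect central extension of a simple $T$ with $R_p(T)\leq d < v_p(|T|)$, Proposition~\ref{prop:p-part}(iii) forces $T$ to be of Lie type in characteristic $p$ (the exceptional set $\mathcal{E}$ is eliminated by a short check on $v_2(|M|)$). So your Fulman--Guralnick argument for $A_m$ is unnecessary, and the base case is handled entirely by Proposition~\ref{prop:isbell_simple}, which does not rely solely on unipotent derangements: for the spin modules and for $E_6$, $E_7$ it instead uses tailored semisimple elements whose fixed space on $V$ is controlled by Lemma~\ref{lem:fixed_space} or by an explicit root-system computation.
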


\begin{shremark} \label{rem:isbell}
To see the connection to Isbell's Conjecture, note that if $g$ is a $p$-element, then $g\rho$ fixes a nonzero vector of $\F_p^d$. However, there is not always a derangement that is a $p$-element. For example, let $d = 2m \geq 6$, let $G = \GL_m(p^2)$, let $H = \GL_m(p)$ and consider the field extension embedding $\rho\: G \to \GL_d(p)$. Then $p^d$ divides $|G:H|$, but if $g \in G$ has $p$-power order, then $g$ is unipotent, so (via Jordan normal form) is conjugate to an element of $H$. Therefore, no $p$-element is a derangement in the primitive action of $G$ on $G/H$. 
\end{shremark}

Theorems~\ref{thm:summary} and~\ref{thm:reduction} are proved together in Section~\ref{s:reduction} with Theorem~\ref{thm:summary}(iii) involving computation with the perfect groups of order at most 50000 in \textsc{Magma} \cite{ref:Magma}.

\begin{notation*}
Our notation for the finite simple groups follows \cite{ref:KleidmanLiebeck}. In particular, we write $\PSL^+_n(q) = \PSL_n(q)$ and $\PSL^-_n(q) = \PSU_n(q)$ and also $E_6^+(q) = E_6(q)$ and $E_6^-(q) = {}^2E_6(q)$. We write $(a,b)$ for the greatest common divisor of positive integers $a$ and $b$.
\end{notation*}

\begin{acknowledgements*}
We thank Marston Conder for his contribution to Remark~\ref{rem:graphs}, Peter M\"uller for drawing our attention to \cite{ref:Klingen98}, Pablo Spiga for generously sharing his proof of Corollary~\ref{cor:soluble} and Gabriel Verret who, in response to an earlier version of this paper, pointed us to \cite{ref:GiudiciPotocnikVerret14}, which we now mention in Remark~\ref{rem:graphs}, and also to Feldman's StackExchange question \cite{ref:Feldman14}. We also thank the anonymous referee. The second author is an EPSRC Postdoctoral Fellow (EP/X011879/2). In order to meet institutional and research funder open access requirements, any accepted manuscript arising shall be open access under a Creative Commons Attribution (CC BY) reuse licence with zero embargo. No data was produced.
\end{acknowledgements*}

\section{Preliminaries} \label{s:prelims}

\subsection{Properties of primitive groups} \label{ss:prelims_primitive}

We begin with some preliminaries on primitive permutation groups. The O'Nan--Scott Theorem provides a case division for the primitive permutation groups, and the version we use is the main theorem of \cite{ref:LiebeckPraegerSaxl88} but with the labels used by Praeger \cite{ref:Praeger97}. The cases arising in the O'Nan--Scott Theorem are summarised in Table~\ref{tab:o'nan-scott}. We record some of the key properties we require in the following remark, and we refer the reader to \cite{ref:LiebeckPraegerSaxl88} for further information. 

\begin{table}[t]
\centering
\begin{tabular}{ll}
\hline
type & description \\
\hline
(HA) & affine: $G = p^k {:} H \leq \AGL_k(p)$ with $H \leq \GL_k(p)$ irreducible \\
(AS) & almost simple: $T \leq G \leq \Aut(T)$ \\
(SD) & diagonal-type: $T^k \leq G \leq T^k.(\Out(T) \times P)$ with $P \leq S_k$ primitive \\
(HS) & diagonal-type: $T^2 \leq G \leq T^2.\Out(T)$ \\
(PA) & product-type: $G \leq H \wr P$ with $H$ of type (AS) and $P \leq S_l$ transitive \\
(CD) & product-type: $G \leq H \wr P$ with $H$ of type (SD) and $P \leq S_l$ transitive \\
(HC) & product-type: $G \leq H \wr P$ with $H$ of type (HS) and $P \leq S_l$ transitive \\
(TW) & twisted wreath action \\
\hline
\end{tabular}
\caption{The primitive permutation groups (here $T$ denotes a nonabelian simple group).} \label{tab:o'nan-scott}
\end{table}

\begin{remark} \label{rem:o'nan-scott}
Recall that the \emph{socle} of a finite group $G$, denoted $\soc(G)$, is the product of the minimal normal subgroups of $G$. Let $G$ be a nontrivial finite primitive permutation group on a set $\Omega$. Then $\soc(G) = T^k$ where $T$ is a simple group and $k$ is a positive integer.
\begin{enumerate}
\item First assume that $T$ is abelian. This gives case (HA). In this case, $G$ is an \emph{affine group}, i.e. $G = V{:}H \leq \AGL(V)$ where $V$ is an elementary abelian group of order $p^k$ and $H \leq \GL(V)$ is an irreducible linear group. The action is given by the natural affine action of $\AGL(V)$ on $V$, viewed as the vector space $\F_p^k$, and $H$ is the stabiliser of the zero vector. Here $V$ is the unique minimal normal subgroup of $G$ and $V$ is regular. \label{rem:o'nan-scott_affine}
\item Next assume that $T$ is nonabelian and $k = 1$. This gives case (AS). In this case, $G$ is an \emph{almost simple group}, i.e. $T \leq G \leq \Aut(T)$, where $T$ is a nonabelian simple group. Here, $T$ is the unique minimal normal subgroup of $G$. \label{rem:o'nan-scott_as}
\item In the remaining cases, $T$ is nonabelian and $k \geq 2$. Therefore, $T^k \leq G \leq \Aut(T^k) = \Aut(T) \wr S_k$. A full description of the groups is given in \cite{ref:LiebeckPraegerSaxl88}. We will just restrict ourselves to discussing the minimal normal subgroups in this case. For (SD), (PA), (CD) and (TW), $\soc(G)$ is the unique minimal normal subgroup, and $\soc(G)$ is regular if and only if $G$ has type (TW). For (HS) and (HC), $G$ has exactly two minimal normal subgroups and these are isomorphic and regular. (For (HS), we always have $k=2$.) \label{rem:o'nan-scott_rest}
\item Parts~(i)--(iii) imply that if $G$ has no regular normal subgroups, then $\soc(G)$ is the unique minimal normal subgroup of $G$ and $\soc(G) \cong T^k$ where $T$ is nonabelian. \label{rem:o'nan-scott_regular}
\item From the description of the groups in \cite{ref:LiebeckPraegerSaxl88}, we see that the degree $|\Omega|$ is given as follows
\[
\begin{array}{ccccccccc}
\hline
\text{type} & \mathrm{(HA)} & \mathrm{(AS)} & \mathrm{(SD)} & \mathrm{(HS)} & \mathrm{(PA)}  & \mathrm{(CD)} & \mathrm{(HC)} & \mathrm{(TW)} \\
|\Omega|    & |T|^k         & |T:T_\omega|  & |T|^{k-1}     & |T|           & |T:T_\omega|^k & |T|^{k-l}     & |T|^{k/2}     & |T|^k         \\
\hline
\end{array}
\]
where $|T|$ is prime if and only if $G$ has type (HA), $l$ is a proper divisor of $k$ if $G$ has type (CD) and $\omega \in \Omega$ if $G$ has type (AS) or (PA). \label{rem:o'nan-scott_degree}
\end{enumerate}
\end{remark}

\subsection{An example} \label{ss:prelims_example}

In this short section, we give the details of the example referred to in Remark~\ref{rem:reduction}\ref{rem:reduction_example}.

\begin{example} \label{ex:example}
Let $G$ be the group with presentation
\[
\< x, y, z, t \mid x^4 = y^4 = z^2 = t^3 = [x,z] = [y,z] = 1, \ [x,y] = z, \ x^t = y, \ y^t = (xy)^{-1} \>,
\]
and consider the subgroups $H_1 = \< x^2, y^2, z, t\>$ and $H_2 = \< x, y^2 \>$. Then $G$ has order $96$, $H_1$ is a maximal subgroup of order $24$, $H_2$ is a subgroup of order $8$ and $G$ is the union of conjugates of $H_1$ and $H_2$. These claims are easily verified in \textsc{Magma}, but we will sketch the key ideas.

Let $N = \< x^2, y^2, z \> \cong 2^3$. Note that $N$ is normal in $G$ and $G/N \cong A_4$. Moreover, $N \leq H_1$ and $H_1/N = C_3$, which is a Sylow $3$-subgroup of $G/N$. In particular, $H_1$ is a maximal subgroup of $G$ and $|H_1| = 24$. Note that $H_2 = \<x\> \times \<y^2\> \cong C_4 \times C_2$, so $|H_2| = 8$.

We will now explain why $G$ is the union of conjugates of $H_1$ and $H_2$. Let $g \in G$. We claim that a suitable conjugate of $g$ is contained in $H_1$ or $H_2$. If $g \in N$, then certainly $g \in H_1$. If $Ng \in G/N$ has order $3$, then a $G/N$-conjugate of $Ng$ is contained in $H_1/N$, so a conjugate of $g$ is contained in $H_1$. It remains to assume that $Ng \in G/N$ has order $2$. By conjugating in $G/N \cong A_4$, we may assume that $Ng = Nx$. Said otherwise, $g = x^\pm y^{2i} z^j$ for some $i,j \in \{0,1\}$. Since $x^y = xz$, by conjugating $g$ by $y$ if necessary, we may assume that $g = x^\pm y^{2i}$, which is an element of $H_2$. This proves the claim.
\end{example}

\subsection{Derangements and primes} \label{ss:prelims_derangements}

In this section, we make some preliminary observations about derangements. We begin with a small generalisation of Jordan's original result, which was first noted in \cite[Section~3]{ref:Arvind17}.

\begin{lemma} \label{lem:jordan_coset}
Let $\Omega$ be finite, let $G \leq \Sym(\Omega)$ be transitive and let $h \in \Sym(\Omega)$. Then the average number of fixed points of elements in the coset $Gh$ is $1$.
\end{lemma}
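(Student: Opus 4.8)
The plan is a double-counting argument in the spirit of Burnside's lemma. Since $g \mapsto gh$ is a bijection from $G$ onto the coset $Gh$, the desired average equals
$\frac{1}{|G|}\sum_{g \in G} |\fix(gh)|$, so it suffices to show that $\sum_{g \in G} |\fix(gh)| = |G|$. First I would expand $|\fix(gh)| = \sum_{\omega \in \Omega} \mathbf{1}[\omega(gh) = \omega]$ and interchange the two (finite) sums, obtaining $\sum_{g \in G}|\fix(gh)| = \sum_{\omega \in \Omega} \bigl|\{g \in G : \omega g h = \omega\}\bigr|$.

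Next I would evaluate the inner count for a fixed $\omega \in \Omega$. The condition $\omega g h = \omega$ is equivalent to $\omega g = \omega h^{-1}$. Because $G$ is transitive on $\Omega$, the point $\omega h^{-1}$ lies in the $G$-orbit of $\omega$, so there is some $g_0 \in G$ with $\omega g_0 = \omega h^{-1}$; then for $g \in G$ we have $\omega g = \omega h^{-1}$ if and only if $gg_0^{-1}$ fixes $\omega$, i.e. the solution set is the right coset $G_\omega g_0$ of the point stabiliser. Hence $\bigl|\{g \in G : \omega g h = \omega\}\bigr| = |G_\omega| = |G|/|\Omega|$ by orbit–stabiliser, independently of $\omega$. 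Summing over the $|\Omega|$ choices of $\omega$ gives $\sum_{g \in G}|\fix(gh)| = |\Omega| \cdot |G|/|\Omega| = |G|$, and dividing by $|G|$ yields the claim.

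There is essentially no obstacle: the only step that uses a hypothesis is the nonemptiness of the fibre $\{g : \omega g = \omega h^{-1}\}$, which is exactly where transitivity of $G$ enters (note that $h$ itself need not lie in $G$, nor normalise it). Without transitivity the statement is false in general, since $\omega h^{-1}$ may fall outside the $G$-orbit of $\omega$, contributing $0$ rather than $|G|/|\Omega|$ to the count.
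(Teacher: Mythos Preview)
Your proof is correct and follows essentially the same double-counting argument as the paper: both swap the order of summation in $\sum_{g\in G}|\fix(gh)|$ and use that the set of $g\in G$ with $\omega g = \omega h^{-1}$ is a coset of $G_\omega$, of size $|G|/|\Omega|$ by orbit--stabiliser. If anything, you are slightly more careful than the paper in making explicit that transitivity is needed to guarantee this fibre is nonempty.
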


\begin{proof}
The key observation is that for $g \in G$ and $\omega \in \Omega$, we have $\omega gh = \omega  \iff g \in G_\omega h^{-1}$. Now observe that
\[
\sum_{g \in G} \fix(gh) = |\{ (g,\omega ) \in G \times \Omega \mid \omega gh = \omega  \}| = \sum_{\omega  \in \Omega} |G_\omega | = |G|
\]
using the fact that $|G_\omega | = |G|/|\Omega|$ since $G$ is transitive.
\end{proof}

\begin{remark} \label{rem:jordan_coset}
Consider Lemma~\ref{lem:jordan_coset} when $h$ is trivial. The average number of fixed points of $G \leq \Sym(\Omega)$ is $1$. If $|\Omega| > 1$, then the identity element has more than one fixed point, so $G$ must contain an element with no fixed points, which returns Jordan's original theorem.
\end{remark}

For the next lemma, we use a famed theorem \cite{ref:FeinKantorSchacher81}, which depends on the CFSG.

\begin{theorem*}[Fein, Kantor \& Schacher, 1981]
Let $\Omega$ be finite. Let $1 < G \leq \Sym(\Omega)$ be transitive. Then $G$ contains a derangement of prime-power order.
\end{theorem*}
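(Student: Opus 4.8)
The plan is to follow the original argument of Fein, Kantor and Schacher \cite{ref:FeinKantorSchacher81}, which relies on the Classification of Finite Simple Groups. I would first reduce to primitive groups: as $|\Omega|>1$, a point stabiliser $G_\omega$ is proper in $G$, hence contained in a maximal subgroup $M$; then $G$ acts primitively and nontrivially on $G/M$, and since $G_\omega^x\leq M^x$ for all $x\in G$, a derangement of prime-power order for that action is one for the action on $\Omega$. So assume $G$ is primitive. If $G$ has a nontrivial regular normal subgroup $R$, then any element of $R$ of prime order (which exists by Cauchy's theorem) is a derangement of prime order; by the O'Nan--Scott theorem (Remark~\ref{rem:o'nan-scott}) this settles types (HA), (TW), (HS) and (HC), and also (AS) when $\soc(G)$ is regular. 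Hence we may assume $N:=\soc(G)=T^k$ for a nonabelian simple group $T$, that $N$ is the unique minimal normal subgroup, and that $G$ has type (AS), (SD), (PA) or (CD).

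Next I would dispatch the diagonal and product types directly, by producing a derangement of prime-power order inside $N$ (which is then also a derangement of $G$). Using the descriptions in \cite{ref:LiebeckPraegerSaxl88}: in type (SD), $N$ acts on $N/\Delta$ for a diagonal subgroup $\Delta\cong T$, and $(g_1,\dots,g_k)\in N$ fixes a point precisely when $g_1,\dots,g_k$ are all $T$-conjugate to a common element, so $(t,1,\dots,1)$ is a derangement of prime-power order for any $t\in T\setminus\{1\}$ of prime-power order; in types (PA) and (CD), $N$ acts componentwise on a Cartesian power $\Delta^l$, with each factor carrying the socle action of $T$ (type PA) or a diagonal action as above (type CD), and a tuple is a derangement exactly when one of its components is, so we place a single prime-power-order derangement of the factor action in one coordinate and $1$ elsewhere — in type (PA) invoking the almost simple case below for the action of $T$ on $\Delta$, and in type (CD) the computation just made.

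Everything is thus reduced to the following, which is the crux: if $T\leq G\leq\Aut(T)$ with $T$ nonabelian simple and $M$ is a maximal subgroup of $G$ with $T\not\leq M$, then $T$ contains an element of prime-power order lying in no $G$-conjugate of $M$ (equivalently, as $M^g\cap T=(M\cap T)^g$, in no $G$-conjugate of $M\cap T$). Proving this, by a case analysis over the possibilities for $T$, is the main obstacle and is where the Classification is essential. For $T=A_n$ one chooses an element of prime-power order of the right cycle shape — a $p$-cycle or a product of $p$-cycles with $p$ prime — using Bertrand's postulate (or sharper prime-gap bounds) to furnish $p$ of the required size, and then appeals to classical facts on which maximal subgroups of $S_n$ can contain it, in particular Jordan's theorem on primitive groups containing a cycle of prime length, with a few small $n$ checked directly. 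For $T$ sporadic one reads off the known maximal subgroups. For $T$ of Lie type over $\F_q$, the key device is a primitive prime divisor $p$ of $q^e-1$ (Zsigmondy's theorem) for a well-chosen $e$, giving an element of order $p$, typically regular semisimple, lying in a suitable cyclic maximal torus; such an element is confined to a short list of maximal subgroups by Aschbacher's subgroup theorem (classical $T$; see \cite{ref:KleidmanLiebeck}) or the known maximal overgroup structure (exceptional $T$), and from that list one selects an element avoiding all $G$-conjugates of $M$. The finitely many $(q,e)$ admitting no primitive prime divisor are handled separately.

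I expect the Lie type step to be the hardest part: controlling the maximal overgroups of a primitive-prime-divisor element uniformly across all classical and exceptional families, and coping with the Zsigmondy exceptions together with the small-rank and small-field anomalies they entail.
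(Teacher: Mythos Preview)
The paper does not give its own proof of this statement. It is quoted as a black box from \cite{ref:FeinKantorSchacher81} in Section~\ref{ss:prelims_derangements}, with the remark that it ``depends on the CFSG'', and is then invoked in the proof of Lemma~\ref{lem:technical} (and again, via \cite{ref:BurnessGiudiciWilson11}, in the proof of Theorem~\ref{thm:isbell}). There is therefore nothing in the paper to compare your attempt to.

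That said, your outline is a faithful sketch of the original argument: reduce to primitive actions by passing to a maximal overgroup of a point stabiliser; eliminate the O'Nan--Scott types with a regular normal subgroup by taking any element of prime order therein; reduce the diagonal and product types to the almost simple case by working inside the socle; and finally handle almost simple groups by a CFSG case analysis, using primitive prime divisors for groups of Lie type. One small wrinkle worth flagging in your product-type reduction: to ensure the element $(t,1,\dots,1)\in T^k$ you build is a derangement for the action of \emph{$G$} (not just of $T^k$), you need $t$ to avoid every $\Aut(T)$-conjugate of $M\cap T$, not merely every $T$-conjugate, since elements of $G$ outside $T^k$ may permute and twist the factors; your formulation of the almost simple problem (``no $G$-conjugate of $M$'' with $T\leq G\leq\Aut(T)$) is exactly the right one, but make sure the inductive call delivers this stronger conclusion. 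As you anticipate, the substantive work is entirely in the almost simple case.
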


\begin{lemma} \label{lem:technical}
Let $G \leq \Sym(\Omega)$ have exactly two orbits $\Omega_1$ and $\Omega_2$. Assume that $|\Omega_1|, |\Omega_2| > 1$ and that no prime divisor of $|\Omega_1|$ divides $|\Omega_2|-1$. Let $N_1$ be the kernel of the action of $G$ on $\Omega_1$. Assume that $N_1$ acts transitively on $\Omega_2$. Then $G$ has a derangement.
\end{lemma}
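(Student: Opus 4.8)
The plan is to reduce the problem to the action of $G$ on $\Omega_1$, apply the theorem of Fein, Kantor and Schacher there, and then run a coset-counting argument on $\Omega_2$ modelled on Lemma~\ref{lem:jordan_coset}. Set $\bar G = G/N_1$ and regard it as a transitive subgroup of $\Sym(\Omega_1)$; it is nontrivial because $|\Omega_1| > 1$. First I would invoke the theorem of Fein, Kantor and Schacher to obtain a derangement $\bar y \in \bar G$ on $\Omega_1$ of prime-power order $r^a$. Since $\langle \bar y\rangle$ is an $r$-group acting on $\Omega_1$ with no fixed point, each of its orbits has size a positive power of $r$, so $r$ divides $|\Omega_1|$, and hence $r$ does not divide $|\Omega_2| - 1$ by hypothesis. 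Using the standard fact that $\bar y$ lifts to an $r$-element $x \in G$ with $xN_1 = \bar y$, I observe that every element of the coset $xN_1$ induces $\bar y$ on $\Omega_1$ and is therefore fixed-point-free on $\Omega_1$; so it remains to find an element of $xN_1$ that is also fixed-point-free on $\Omega_2$.

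Next I would pass to $\Omega_2$. Let $M \leq \Sym(\Omega_2)$ and $h \in \Sym(\Omega_2)$ be the images of $N_1$ and $x$ under the restriction map $G \to \Sym(\Omega_2)$. By hypothesis $M$ is transitive on $\Omega_2$, and $h$ normalises $M$ (since $N_1$ is normal in $G$), so the image of $xN_1$ in $\Sym(\Omega_2)$ is the coset $Mh$. By Lemma~\ref{lem:jordan_coset}, the elements of $Mh$ have on average one fixed point on $\Omega_2$. If some element of $Mh$ has no fixed point on $\Omega_2$, write it as the restriction of $xn$ for some $n \in N_1$; then $xn$ acts as $\bar y$ on $\Omega_1$ (as $n$ acts trivially there) and without fixed points on $\Omega_2$, so $xn$ is a derangement of $G$ and we are finished. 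Otherwise every element of $Mh$ fixes at least one point of $\Omega_2$, and since the fixed-point counts are non-negative integers with average $1$, every element of $Mh$, and in particular $h$ itself, fixes exactly one point of $\Omega_2$; call it $\omega$.

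To conclude, I would derive a contradiction in this remaining case. Because $h$ is the image of the $r$-element $x$, the group $\langle h\rangle$ is an $r$-group; it fixes $\omega$ (as $h$ does) and fixes no other point of $\Omega_2$, since $\fix_{\Omega_2}(\langle h\rangle) \subseteq \fix_{\Omega_2}(h) = \{\omega\}$. Hence $\langle h\rangle$ acts on $\Omega_2 \setminus \{\omega\}$ without fixed points, so every one of its orbits there has size a positive power of $r$, giving $r \mid |\Omega_2| - 1$. This contradicts the fact that $r$ does not divide $|\Omega_2| - 1$, so the remaining case does not occur, and $G$ has a derangement.

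I do not expect a serious obstacle here: the argument is short once the setup is in place. The one point requiring care is to take the lift $x$ of $\bar y$ to be an $r$-element, so that $\langle h\rangle$ is an $r$-group in the final step; the hypothesis relating the prime divisors of $|\Omega_1|$ to $|\Omega_2| - 1$ is then precisely what rules out the degenerate situation in which every element of the relevant coset has a unique fixed point on $\Omega_2$.
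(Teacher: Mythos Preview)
Your proof is correct and follows essentially the same route as the paper's: both apply Lemma~\ref{lem:jordan_coset} to a coset of $N_1$ to reduce to the case where every element of that coset has exactly one fixed point on $\Omega_2$, invoke Fein--Kantor--Schacher to pin down a prime $r$ dividing $|\Omega_1|$, and then derive the contradiction $r \mid |\Omega_2|-1$ from the cycle structure on $\Omega_2$. The only cosmetic difference is that you lift the derangement to an $r$-element of $G$ at the outset, whereas the paper first picks any lift and then passes to a suitable $r'$-power; the effect is the same.
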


\begin{proof}
Viewing $G$ as a subgroup of $\Sym(\Omega_1) \times \Sym(\Omega_2)$, for all $g \in G$, write $g = (g_1,g_2)$ where $g_1 \in \Sym(\Omega_1)$ and $g_2 \in \Sym(\Omega_2)$. Since $N_1$ acts transitively on $\Omega_2$, by Lemma~\ref{lem:jordan_coset}, the average number of fixed points on $\Omega_2$ of elements in any coset of $N_1$ in $G$ is $1$.  Suppose that for every $g \in G$ such that $g_1 \in \Sym(\Omega_1)$ is a derangement, every element of the coset $N_1g$ has exactly one fixed point on $\Omega_2$. By \cite[Theorem~1]{ref:FeinKantorSchacher81}, there exists an element $g \in G$ such that $g_1 \in \Sym(\Omega_1)$ is a derangement of order a power of some prime $p$. In particular, $|\Omega_1|$ is divisible by $p$. Consider the corresponding permutation $g_2 \in \Sym(\Omega_2)$. By replacing $g$ by a suitable $p'$-power if necessary, we may assume that every cycle of $g_2$ has length a power of $p$ (while maintaining the condition that $g_1$ is a derangement). However, $g_2$ has exactly one fixed point, so $|\Omega_2|-1$ is divisible by $p$, which contradicts our hypothesis. Therefore, there must exist $g \in G$ such that $g_1 \in \Sym(\Omega_1)$ is a derangement and $h \in N_1g$ such that $h_2 \in \Sym(\Omega_2)$ is a derangement. However, since $N_1$ acts trivially on $\Omega_1$, we know that $h_1 = g_1 \in \Sym(\Omega_1)$ is also a derangement, so $h$ is a derangement on $\Omega$.
\end{proof}

\subsection{Bounds for simple groups} \label{ss:prelims_bounds}

We will now establish some new bounds on invariants associated with finite simple groups that we will use at various points in the proofs that follow. To state these bounds, we need some notation. Let $p$ be a prime number, let $n$ be a positive integer, let $G$ be a finite group and define the following (where $H \preccurlyeq G$ means that $H$ is isomorphic to a subgroup of $G$):
\begin{align*}
v_p(n) &= \max\{ d \mid \text{$p^d$ divides $n$} \}. \\
P(G)   &= \min\{ d \mid G \preccurlyeq S_d \} \\[2pt]
R_p(G) &= \min\{ d \mid G \preccurlyeq \PGL_d(\overline{\F}_p) \} \\[2pt]
n_G'   &= \min\{ n \mid G \preccurlyeq \GL_{2n}(2) \text{ irreducible} \}.
\end{align*}

The main result of this section is the following.

\begin{proposition} \label{prop:p-part}
Let $G$ be a nonabelian finite simple group and let $p$ be prime. Then
\begin{enumerate}[{\rm (i)}]
\item $v_p(|G|) \leq P(G)$
\item $v_p(|G|) \leq 2^{n_G'}$ if $p \neq 2$
\item $v_p(|G|) \leq R_p(G)$ if neither of the following hold
\begin{enumerate}[{\rm (a)}]
\item $G$ is a finite simple group of Lie type in characteristic $p$
\item $p=2$ and $G \in \mathcal{E}$ where $\mathcal{E} = \{ A_8, \, \PSU_4(3), \, {\rm M}_{22}, \, {\rm J}_2, \, {\rm Suz} \}$.
\end{enumerate}
\end{enumerate}
\end{proposition}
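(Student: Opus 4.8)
The plan is to proceed by inspecting the Classification of Finite Simple Groups, treating the alternating, sporadic, and Lie type families in turn, and in each case comparing the explicit prime factorisation of $|G|$ against known lower bounds for $P(G)$, $n_G'$ and $R_p(G)$. For part~(i), recall that $P(G)$ is the minimal faithful permutation degree, and if $G \preccurlyeq S_d$ then $v_p(|G|) \le v_p(d!)$; but actually we want the cleaner bound $v_p(|G|) \le P(G)$ directly, which for $G = A_n$ (where $P(G) = n$) follows since $v_p(n!) = \sum_{i \ge 1} \lfloor n/p^i \rfloor \le \sum_{i \ge 1} n/p^i = n/(p-1) \le n$, and actually one checks $v_p(|A_n|) = v_p(n!)$ or $v_p(n!) - 1$ is at most $n$. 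For sporadic groups one reads $P(G)$ from the ATLAS and checks finitely many cases. For Lie type groups $G$ over $\F_q$ with $q = r^f$, one uses the well-known lower bounds on $P(G)$ from Cooperstein, Vasilyev, Guralnick--Magaard--etc.\ (the minimal degrees of the natural or near-natural actions), and compares with $v_p(|G|)$, which is easy to bound since $|G|$ is a product of $q$-powers times small cyclotomic factors $q^i - 1$ or $q^i + 1$; the largest prime power dividing such a factor is at most $q^i + 1$, so $v_p$ of it is $O(i \log_p q)$, and summing over the $O(\mathrm{rk}(G))$ factors gives something comfortably below $P(G)$.

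For part~(ii), the quantity $n_G'$ is the smallest $n$ with $G$ embedding \emph{irreducibly} in $\GL_{2n}(2)$, so we want $v_p(|G|) \le 2^{n_G'}$ for odd $p$, i.e.\ $|G|_p \le 2^{n_G'}$. Here the key input is a lower bound on the smallest faithful $\F_2$-representation degree (or faithful irreducible degree over $\FF_2$) of $G$ — these are tabulated in Landazuri--Seitz, Guralnick--Tiep and related works. Since $G \preccurlyeq \GL_{2n}(2)$, in particular $|G| \le |\GL_{2n}(2)| < 2^{(2n)^2}$, but that is far too weak; instead I would use that $|G|_p$ divides $|\GL_{2n}(2)|$ and for odd $p$ the $p$-part of $|\GL_m(2)|$ is the $p$-part of $\prod_{i=1}^m (2^i-1)$, which is governed by the multiplicative order $e$ of $2$ mod $p$: this $p$-part is at most $p^{\lfloor m/e\rfloor} \cdot (\text{something})$ and in any case is much smaller than $2^m$. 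Combining with the Landazuri--Seitz lower bound on the representation degree $2n$ in terms of $|G|$ (or in terms of $q$ for Lie type) yields $|G|_p \le 2^{n}$ comfortably, once one checks the small sporadic and low-rank cases by hand.

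For part~(iii), $R_p(G)$ is the smallest $d$ with $G \preccurlyeq \PGL_d(\FF_p)$, i.e.\ the smallest dimension of a faithful projective representation over $\FF_p$. The cases we exclude are exactly those where the standard lower bounds for $R_p(G)$ fail to beat $v_p(|G|)$: namely, (a) $G$ of Lie type in the defining characteristic $p$ — where the minimal projective dimension can be as small as the rank while $|G|_p$ is a large power of $p$ — and (b) a short explicit list of small groups $\{A_8, \PSU_4(3), \mathrm{M}_{22}, \mathrm{J}_2, \mathrm{Suz}\}$ where $v_2(|G|)$ happens to exceed the minimal $2$-modular projective degree. Outside these, I would again split by family: for $G$ of Lie type in characteristic $r \ne p$, use the Landazuri--Seitz--Zalesskii lower bounds on $R_p(G)$ (the cross-characteristic bounds, which are essentially $q^{\mathrm{rk}}$-ish) and bound $v_p(|G|)$ as in part~(i); for alternating groups use the known minimal $p$-modular degree of $A_n$ (roughly $n-2$ for $p \mid n$, etc.) against $v_p(n!) \le (n-1)/(p-1)$; and for sporadic groups consult the modular ATLAS for each $p$.

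The main obstacle is part~(iii), and specifically the \emph{defining-characteristic gap}: for Lie type $G$ in characteristic $p$, $v_p(|G|)$ is the full power of $p$ dividing the order (of size roughly $q^{N}$ where $N$ is the number of positive roots), while $R_p(G)$ can be as small as $\mathrm{rk}(G)+1$, so the inequality is genuinely false and must be excised — this is why clause~(a) is there, and it is not hard, just a matter of recognising it. The subtler difficulty is pinning down the \emph{exact} exceptional set $\mathcal{E}$ in clause~(b): these arise from coincidences where $G$ has an unusually small $2$-modular representation (often because of an exceptional isomorphism, e.g.\ $A_8 \cong \PSL_4(2)$, or a small Schur multiplier phenomenon), and confirming that the list is complete requires either a careful case analysis using the lower bounds of Guralnick--Tiep on $2$-modular degrees combined with explicit computation for the finitely many small-rank / small-order groups where those bounds are weak, or a direct appeal to the modular ATLAS. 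I expect the write-up to proceed by first disposing of parts~(i) and~(ii) quickly via the classical degree bounds, then organising part~(iii) around the cross-characteristic bounds with the defining-characteristic case set aside, and finally verifying by computation that the only additional failures for $p=2$ are the five groups in $\mathcal{E}$.
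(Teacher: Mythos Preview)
Your plan is essentially the paper's: split by CFSG family and compare the known order formulae against standard lower bounds for $P(G)$, $R_p(G)$ and the minimal faithful $\F_2$-dimension. For parts~(i) and~(iii) your outline matches the paper's proof almost exactly --- alternating groups via Legendre's formula together with $R_p(A_n)\ge n-2$, sporadic groups by table lookup, Lie type groups via Artin's bound $p^{v_p(|G|)}\le(4r+4)^d$ on the cross-characteristic part combined with Landazuri--Seitz-type lower bounds on $R_p(G)$, plus a short explicit list of small-rank groups verified by computer.

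For part~(ii) your suggested route is genuinely different from the paper's and in fact cleaner. The paper argues intrinsically: it first proves a lemma relating $n_G'$ to $R_2(G)$ and the defining-field degree $f$ (showing $R_2(G)\cdot f\le 2n_G'\le R_2(G)\cdot fu$ where $\F_{2^{fu}}$ is a splitting field), and then for each Lie-type family bounds $v_p(|G|)$ in terms of the Lie data and compares, splitting further into subcases according as $p\mid r$, $2\mid r$, or neither. Your extrinsic idea --- use $G\le\GL_{2n_G'}(2)$ to get $v_p(|G|)\le v_p(|\GL_{2n_G'}(2)|)$, and for odd $p$ a short lifting-the-exponent computation gives $v_p(|\GL_{2n}(2)|)<2n\le 2^n$ --- bypasses that entire case analysis and needs nothing about $G$ beyond the embedding. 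You should tighten the phrasing, though: the target inequality is $v_p(|G|)\le 2^{n_G'}$, not ``the $p$-part is much smaller than $2^m$'', and the step $2n\le 2^n$ should be made explicit. What the paper's approach buys is control of $n_G'$ in terms of $R_2(G)$ that is re-used elsewhere; what yours buys is a uniform two-line proof of part~(ii) itself.
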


A \emph{splitting field} for a finite group $G$ is a field $F$ that is minimal subject to the property that every irreducible representation over $\overline{F}$ is expressible over $F$. Defining characteristic splitting fields for the finite simple groups of Lie type are given in \cite[Proposition~5.4.4]{ref:KleidmanLiebeck}.

\begin{lemma} \label{lem:fields}
Let $G$ be a nonabelian finite simple group of Lie type defined over $\F_{2^f}$, and let $\F_{2^{fu}}$ be a splitting field for $G$. Then
\[
R_2(G) \cdot f \leq 2n_G' \leq R_2(G) \cdot fu.
\]
\end{lemma}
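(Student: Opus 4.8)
The plan is to unwind the three invariants in terms of representation theory over fields of characteristic $2$ and then compare dimensions field-by-field. Write $R_2(G) = r$, so $G$ embeds in $\PGL_r(\overline{\F}_2)$; lifting, $G$ has a projective representation of dimension $r$ over $\overline{\F}_2$, and since $G$ is simple (hence perfect, so its Schur cover is perfect and the $2$-part of the Schur multiplier contributes nothing to representations over characteristic $2$ that one cannot linearise — more precisely, a projective representation of a perfect group over a field of characteristic coprime to the relevant part of the multiplier lifts, and here one uses that representations of the full covering group $\tilde G$ in characteristic $2$ factor through the largest quotient of order prime to $2$ in the multiplier) we may take an \emph{honest} linear representation $\tilde G \to \GL_r(\overline\F_2)$ that is faithful on $G$, and in fact $G \to \GL_r(\overline\F_2)$ is faithful since $G$ is simple and the representation is nontrivial. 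So the two inequalities amount to: (a) the minimal dimension of a faithful $\overline\F_2$-representation of $G$, call it $r$, satisfies $rf \le 2n_G'$; and (b) $2n_G' \le r \cdot fu$. Throughout I will use that $n_G'$ is by definition the least $n$ with $G \preccurlyeq \GL_{2n}(2)$ irreducibly, i.e. the least \emph{even} dimension of a faithful irreducible $\F_2$-representation of $G$ (it is automatically even for $G$ nonabelian simple, a point the authors presumably established or will invoke when discussing $n_G'$).

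For the right-hand inequality $2n_G' \le R_2(G)\cdot fu$: take a faithful irreducible representation $\rho$ of $G$ over $\overline\F_2$ of dimension $r = R_2(G)$. By the definition of splitting field, $\rho$ can be realised over $\F_{2^{fu}}$, giving a faithful irreducible $\F_{2^{fu}}$-representation of dimension $r$. Restricting scalars from $\F_{2^{fu}}$ to $\F_2$ turns this into a faithful $\F_2$-representation of dimension $r\cdot fu$. It need not be irreducible, but its irreducible constituents are Galois conjugates of one faithful irreducible $\F_2$-module; picking one faithful irreducible $\F_2$-constituent $W$, we get $\dim_{\F_2} W \le r f u$, and since $W$ is a faithful irreducible $\F_2$-module of $G$ its dimension is even (as above) and at least $2n_G'$ — wait, that gives $2n_G' \le \dim W \le rfu$ only if $W$ itself has even dimension at least $2n_G'$; the point is $2n_G' = $ min even faithful irreducible $\F_2$-dimension and $\dim W$ is such a dimension, so $2n_G' \le \dim W \le rfu$, as required. (If one worries that a faithful irreducible $\F_2$-constituent of the restriction of scalars might not exist, note $\bigoplus$ of constituents is faithful, so some constituent is faithful since $G$ is simple.)

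For the left-hand inequality $R_2(G)\cdot f \le 2n_G'$: start from a faithful irreducible $\F_2$-representation $M$ of $G$ of dimension $2n_G'$. Extend scalars to $\overline\F_2$: $M \otimes \overline\F_2$ is faithful, and it decomposes as a sum of Galois-conjugate absolutely irreducible $\overline\F_2$-modules $M_1,\dots,M_t$, each of dimension $2n_G'/t$, and the full Galois orbit appears because $M$ was irreducible over $\F_2$; the endomorphism field of $M$ is $\F_{2^t}$ and $t \mid 2n_G'$. Each $M_i$ is an absolutely irreducible $\overline\F_2$-representation, faithful for at least one $i$ hence (since the $M_i$ are Galois conjugate and faithfulness is a Galois-invariant property) faithful for all $i$. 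Therefore $R_2(G) \le \dim M_i = 2n_G'/t$. To finish I need $t \ge f$, i.e. the field of definition of any such absolutely irreducible constituent contains $\F_{2^f}$ — equivalently, no faithful absolutely irreducible $\overline\F_2$-module of $G$ can be realised over a proper subfield of $\F_{2^f}$. This is exactly the content of the Lie-type structure: for $G$ of Lie type over $\F_{2^f}$, a faithful (absolutely irreducible) representation in the defining characteristic has field of definition a field containing $\F_{2^f}$ — this follows from Steinberg's tensor product theorem together with the fact that a faithful representation must involve a Frobenius twist by each power $0,1,\dots,f-1$ (or at least enough of them to force $\F_{2^f}$), or more directly from the fact that the smallest field over which the simple $\F_{2^f}[G]$-modules are defined is $\F_{2^f}$ itself for a faithful module; I would cite \cite[Proposition~5.4.4]{ref:KleidmanLiebeck} or the standard references on modular representations of groups of Lie type. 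Given $t \ge f$ and $R_2(G) \le 2n_G'/t$ we get $R_2(G)\cdot f \le R_2(G)\cdot t \le 2n_G'$.

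The main obstacle is the left-hand inequality, specifically pinning down that every faithful absolutely irreducible $\overline\F_2$-module of a Lie type group over $\F_{2^f}$ has field of definition containing $\F_{2^f}$ — this is where one genuinely uses the Lie-type hypothesis (rather than just simplicity), and it is the step that requires invoking Steinberg's tensor product theorem or the cited classification of defining-characteristic splitting fields rather than a soft argument; the lifting of projective to linear representations in characteristic $2$ (needed to translate $R_2(G)$ into honest module dimensions) is a secondary technical point that I would dispatch by the standard fact that a faithful projective representation of a finite simple group over $\overline\F_p$ linearises on the universal $p'$-cover, whose relevant modules coincide with those of $G$ here. I would also double-check the parity claim for $n_G'$ (that minimal faithful irreducible $\F_2$-dimensions are even, so that $2n_G'$ really is that minimum) is either established earlier or follows because a self-dual issue forces evenness — but if the paper's definition of $n_G'$ already builds in the factor $2$, this is moot.
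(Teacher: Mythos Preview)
Your treatment of the right-hand inequality matches the paper's approach (you take an irreducible constituent where the paper asserts irreducibility of the whole restriction of scalars; your version is if anything more careful).

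The left-hand inequality has a genuine gap. Your key claim---that no faithful absolutely irreducible $\overline{\F}_2$-module of $G$ can be realised over a proper subfield of $\F_{2^f}$---is false. Take $G = \SL_2(4)$, so $f=2$ and $R_2(G)=2$. The Steinberg module $L(1)\otimes L(1)^{(2)}$ is $4$-dimensional, absolutely irreducible, faithful, and Frobenius-stable (twisting swaps the two tensor factors), hence defined over $\F_2$, not $\F_4$. In fact Steinberg's theorem gives the \emph{reverse} divisibility: for untwisted $G$ over $\F_{2^f}$, the field of definition $\F_{2^d}$ of any absolutely irreducible module satisfies $d \mid f$, so $d \leq f$, not $d \geq f$. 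If your $M$ happens to be this Steinberg module (which has the minimal dimension $2n_G'=4$ here), then your $t$ equals $1$, and the bound $R_2(G) \leq 2n_G'/t$ yields only $2 \leq 4$, not the required $R_2(G)\cdot f = 4 \leq 2n_G'$.

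What the paper uses instead is a different consequence of Steinberg (via \cite[Proposition~5.4.6 and Remark~5.4.7]{ref:KleidmanLiebeck}): a faithful absolutely irreducible module with field of definition $\F_{2^d}$ decomposes, after extension to $\F_{2^f}$, as a twisted tensor product of $f/d$ faithful modules, so its dimension is at least $R_2(G)^{f/d}$. One then applies the elementary inequality $R_2(G)^{f/d} \geq R_2(G)\cdot(f/d)$ to obtain $2n_G'/e \geq R_2(G)\cdot f/e$, hence $2n_G' \geq R_2(G)\cdot f$. The exponential growth of the dimension as the field of definition shrinks is exactly what compensates for $d$ being possibly smaller than $f$; your linear bound $R_2(G) \leq 2n_G'/t$ does not capture this and cannot close the argument on its own. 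You correctly identified Steinberg's theorem as the tool, but the specific consequence you invoke is the wrong one.
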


\begin{proof}
By definition, there is a faithful projective representation $G \to \PGL_{R_2(G)}(2^{fu})$, which is necessarily irreducible since $G$ is nonabelian simple. Composing with the field extension embedding $\PGL_{R_2(G)}(2^{fu}) \to \PGL_{R_2(G) \cdot fu}(2)$, we obtain a faithful irreducible representation $G \to \PGL_{R_2(G) \cdot fu}(2) = \GL_{R_2(G) \cdot fu}(2)$, which proves that $2n_G' \leq R_2(G) \cdot fu$.

By definition, there is a faithful irreducible representation $G \to \GL_{2n_G'}(2) = \PGL_{2n_G'}(2)$. Tensoring with $\End_{\F_2G}(V) = \F_{2^e}$, we obtain a faithful absolutely irreducible representation $G \to \PGL_{2n_G'/e}(2^e)$ (see \cite[Lemma~2.10.2]{ref:KleidmanLiebeck}). Now we choose $d \leq e$ to be minimal such that this representation is expressible over $\F_{2^d}$, thus yielding a faithful absolutely irreducible representation $G \to \PGL_{2n_G'/e}(2^d)$. Applying \cite[Proposition~5.4.6 \& Remark~5.4.7]{ref:KleidmanLiebeck}, which are consequences of Steinberg's twisted tensor product theorem, we deduce that 
\[
2n_G'/e \geq R_2(G)^{f/d} \geq R_2(G) \cdot f/d \geq R_2(G) \cdot f/e,
\] 
which proves that $2n_G' \geq R_2(G) \cdot f$, as claimed. 
\end{proof}

\begin{lemma} \label{lem:bound} \quad
\begin{enumerate}
\item Let $d \geq 4$ and $r,p \geq 2$ with $r \neq p$ and $(d,r) \not\in \{ (4,2), (4,3), (5,2) \}$. Then 
\[
(4r+4)^d \leq p^{r^{d-2}-1}.
\]
\item Let $b,p \geq 3$ and $f \geq 1$. Then 
\[
(b+1)^2 f \leq p^{bf}-1.
\]
\end{enumerate}
\end{lemma}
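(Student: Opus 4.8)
The plan is to prove both parts by a monotonicity-plus-base-case argument: reduce each inequality to its "hardest" instance by showing the right-hand side grows faster than the left as the parameters increase, then verify the finitely many boundary cases directly.

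For part (i), I would take logarithms and rephrase the claim as $d\log(4r+4) \leq (r^{d-2}-1)\log p$. Since $p\geq 2$ and $p\neq r$, the worst case for $p$ is $p=2$ (unless $r=2$, in which case $p\geq 3$ and we use $p=3$); so it suffices to treat those two base values of $p$. Fixing $p$, I would show the inequality is "monotone" in $d$ and in $r$ in the sense that if it holds for some $(d,r)$ then it holds for $(d+1,r)$ and for $(d,r+1)$: concretely, going from $d$ to $d+1$ multiplies the left side by the factor $\tfrac{d+1}{d}\leq\tfrac54$, while the exponent on the right jumps from $r^{d-2}-1$ to $r^{d-1}-1$, i.e. increases by $r^{d-2}(r-1)\geq 2$, so the right side at least doubles — comfortably beating $\tfrac54$. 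A similar (slightly more careful) estimate handles incrementing $r$: replacing $r$ by $r+1$ multiplies the left side by $\bigl(\tfrac{r+1}{r}\bigr)^d \leq (\tfrac54)^d$ but the exponent on the right grows from $r^{d-2}-1$ to $(r+1)^{d-2}-1$, and one checks $(r+1)^{d-2}-r^{d-2} \geq d\log_p(5/4)\cdot r^{d-2}/\text{something}$ — here the bookkeeping needs a little care, and it is cleanest to instead induct on $d$ only and, for the base $d=4$ (and $d=5$), check all admissible $r$ directly. So the real content reduces to: verify $(4r+4)^4 \leq p^{r^2-1}$ and $(4r+4)^5 \leq p^{r^3-1}$ for the small excluded-then-allowed values of $(d,r)$, and for $d=4$, $r\geq 4$ (with $p=2$), and $d=5$, $r\geq 3$ (with $p=2$), and the $r=2$, $p=3$ cases — all of which are finite or one-variable checks. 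For instance $(4r+4)^4 = 256(r+1)^4$ and $2^{r^2-1}$: at $r=4$ this is $256\cdot 625 = 160000 \leq 2^{15}=32768$? No — so $(4,4)$ is one of the cases needing $r$ somewhat larger, which is exactly why $(4,2),(4,3)$ are excluded but one must check where the crossover actually is; I would locate it and then use the doubling argument to push through all larger $r$.

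For part (ii), the claim $(b+1)^2 f \leq p^{bf}-1$ with $b,p\geq 3$, $f\geq 1$ is softer. I would first fix $f=1$ and $p=3$ (the worst case for $p$) and prove $(b+1)^2 \leq 3^b - 1$ for $b\geq 3$ by induction on $b$: the base $b=3$ gives $16 \leq 26$, and the inductive step follows since $3^{b+1}-1 - (3^b-1) = 2\cdot 3^b$ dwarfs $(b+2)^2-(b+1)^2 = 2b+3$. Then I would restore the parameters: for general $p\geq 3$, $p^{bf}-1 \geq 3^{bf}-1 \geq (bf+1)^2 \cdot 1 \geq (b+1)^2 f$, where the middle inequality is the $f=1$, $p=3$ case applied with $b$ replaced by $bf$ (valid since $bf\geq 3$), and the last inequality is the elementary $(bf+1)^2 \geq (b+1)^2 f$ for $f\geq 1$, $b\geq 1$ (expand: $(bf+1)^2 = b^2f^2 + 2bf + 1 \geq f(b^2+2b+1) = f(b+1)^2$ because $b^2 f^2 \geq b^2 f$ and $1 \geq f\cdot 0 + \dots$ — one checks $b^2f^2 + 2bf + 1 - b^2 f - 2bf - f = b^2f(f-1) + 1 - f$, which is $\geq 0$ since $b^2 f(f-1) \geq f-1$ for $b\geq 1$). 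This cleanly collapses (ii) to a single one-variable induction.

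I expect the main obstacle to be part (i): unlike (ii), the double dependence on $d$ and $r$ with the excluded small cases means one cannot simply induct from a single base case, and the crossover point (where $(4r+4)^d$ finally drops below $p^{r^{d-2}-1}$) sits awkwardly close to the excluded pairs, so the argument has to be organised as "induct on $d$ with base cases $d=4,5$, and within each base case induct on $r$ from an explicit starting value, having separately verified the handful of genuinely small $(d,r,p)$ triples." Keeping that case analysis tight — and making sure the $r=2$ branch (forcing $p\geq 3$) is not overlooked — is where the care is needed; the growth-rate comparisons themselves are routine once set up.
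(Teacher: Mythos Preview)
Your argument for part~(ii) is correct and considerably more detailed than the paper's own proof, which consists of the single sentence ``This is easily verified.'' So there is no substantive approach in the paper to compare against; your induction on $b$ together with the reduction $(bf+1)^2 \geq (b+1)^2 f$ does the job cleanly.

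For part~(i), however, you have a genuine gap --- but it is not a gap in your method so much as a failure to recognise what your own computation shows. You correctly compute that at $(d,r,p)=(4,4,2)$ one has $(4r+4)^d = 20^4 = 160000$ while $p^{r^{d-2}-1} = 2^{15} = 32768$, so the claimed inequality \emph{fails}. This triple satisfies all the hypotheses of the lemma as stated: $d\geq 4$, $r,p\geq 2$, $r\neq p$, and $(d,r)=(4,4)\notin\{(4,2),(4,3),(5,2)\}$. So the lemma is literally false as written, and no amount of ``locating the crossover'' will salvage a proof. You should have stopped at this point and flagged the issue rather than trying to push the induction through.

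What is going on is that in the paper's only application of part~(i) (the proof of Proposition~2.4, Case~4, Part~(iii)), the prime $p$ is assumed not to be the defining characteristic of the group, i.e.\ $p\nmid r$ rather than merely $p\neq r$; under that stronger hypothesis the case $r=4$, $p=2$ is excluded and the inequality does hold (for $r=4$ one then has $p\geq 3$, and $20^4 = 160000 \leq 3^{15} = 14348907$). With the hypothesis corrected to $p\nmid r$, your monotonicity-plus-base-case strategy goes through: the induction on $d$ is exactly as you describe, and the finitely many base checks at $d=4,5$ all succeed once $(4,4,2)$ is ruled out.
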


\begin{proof}
This is easily verified.
\end{proof}

\begin{proof}[Proof of Proposition~\ref{prop:p-part}]
We divide into cases according to $G$.

\emph{\textbf{Case~1.} $G$ is an alternating group.}\nopagebreak

Let $G = A_m$. Recall that $v_p(m!) = (m-s_p(m))/(p-1)$ where $s_p(m)$ is the sum of the digits in the base $p$ expansion of $m$. In particular, $v_p(|G|) \leq v_p(m!) \leq m$. Part~(i) follows since $P(G) = m$. Now consider part~(ii), so $p >2$. If $m > 8$, then $2n_{G}' \geq R_2(G) \geq m-2$ (see \cite[Proposition~5.3.7]{ref:KleidmanLiebeck}(i)), so 
\[
v_p(|G|) \leq m/2 \leq 2^{(m-2)/2} \leq 2^{R_2(|G|)/2} \leq 2^{n_G'}.
\]
For $m \leq 8$, it suffices to note that $v_p(|G|) \leq 2$ since $p$ is odd. Finally consider part~(iii). For now assume that $m > 8$, so $R_p(G) \geq m-2$ (see \cite[Proposition~5.3.7]{ref:KleidmanLiebeck}(i)). We claim that $v_p(|G|) \leq m-2$. If $p > 2$, then $v_p(|G|) \leq m/2 \leq m-2$, and if $p=2$, then
\[
v_2(|G|) = v_2(m!/2) = v_2(m!)-1 = m-s_2(m)-1 \leq m-2.
\]
For $m \leq 8$, consulting \cite[Proposition~5.3.7]{ref:KleidmanLiebeck}(ii), if $v_p(|G|) > R_p(G)$ then $m=8$ and $p=2$.

\emph{\textbf{Case~2.} $G$ is sporadic.}\nopagebreak

In this case, $v_p(|G|)$ can be read off from the factorised order of $G$ given in \cite[Table~5.1.C]{ref:KleidmanLiebeck} and $R(G) = \min \{ R_p(G) \mid \text{$p$ is prime}\}$ is given in \cite[Proposition~5.3.8]{ref:KleidmanLiebeck}, and from these it is easy to observe that if $v_p(|G|) > R(G)$, then $p=2$ and $G \in \mathcal{E}$. Part~(iii) follows immediately from this observation. The observation also gives part~(i) since $P(G) \geq R(G)$ and the three exceptions in $\mathcal{E}$ can be verified using the $\mathbb{ATLAS}$ \cite{ref:ATLAS}. Finally, the observation gives part~(ii) since $R(G) \leq 2n_{G}' \leq 2^{n_{G}'}$ and the three exceptions only arise when $p=2$.

\emph{\textbf{Case~3.} $G = {}^2F_4(2)'$.}\nopagebreak

Here $|G| = 2^{11} \cdot 3^3 \cdot 5^2 \cdot 13$. Using \textsc{Magma} we see that $P(G) = 1600$ and consulting \cite[Chapter~5]{ref:KleidmanLiebeck} we see that $R_p(G) \geq 26$, so the result holds.

\emph{\textbf{Case~4.} $G$ is a group of Lie type over $\F_r$.}\nopagebreak

We exclude $\PSL_2(4)$, $\PSL_2(5)$ and $\PSL_4(2)$ on account of their isomorphisms with alternating groups. It is easy to verify the result in \textsc{Magma} for the following groups
\begin{equation} \label{eq:p-part_computation}
\begin{array}{c}
 F_4(2), \ G_2(4), \ \PSL_2(r) \, (r \leq 16), \ \PSL^\pm_3(4), \ \PSL^\pm_4(2),    \ \PSL^\pm_4(3), \ \PSL^\pm_5(2),               \\[2pt]
\PSp_6(2),     \ \PSp_8(2),        \ \PSp_8(3),     \ \PSp_{10}(2), \ \Omega_9(3), 
\POm^\pm_{8}(2), \ \POm^\pm_8(3), \ \POm^\pm_{10}(2), 
\end{array}
\end{equation}
so we will exclude these groups from the analysis that follows too.

\begin{table}[b]
\[
\begin{array}{ccccccccccccccc}
\hline
   & A^\pm_{\ell \geq 2} & B_{\ell \geq 2}  & C_{\ell \geq 3} & D_{\ell \geq 4} & E_8 & E_7 & E_6^\pm & F_4 & G_2 & A_1 & {}^3D_4 & {}^2F_4 & {}^2B_2 & {}^2G_2 \\
\hline
e  & (\ell^2+\ell)/2     & \ell^2           & \ell^2          & \ell^2-\ell     & 120 & 63  & 36      & 24  & 6   & 1   & 12      & 12      & 2       & 3       \\
d  & \ell+1              & \ell             & \ell            & \ell            & 15  & 9   & 12      & 6   & 3   & 1   & 6       & 6       & 2       & 3       \\
c  & \ell+1              & 2\ell+1          & 2\ell           & 2\ell           & 248 & 56  & 27      & 25  & 6   & 2   & 8       & 26      & 4       & 7       \\
b  & \ell-1              & \ell             & \ell-1          & \ell            & 28  & 16  & 10      & 7   & 2   & 0   & 4       & 4       & 1       & 1       \\
\hline
\end{array}
\]
\caption{The parameters $a$, $b$, $c$ and $d$.} \label{tab:p-part_abcd}
\end{table}

Define $e$, $d$, $c$, $b$ in Table~\ref{tab:p-part_abcd}. Note that $b \geq d-2$. By consulting the order formula in \cite[Tables~5.1.A \& 5.1.B]{ref:KleidmanLiebeck} we see that the $r$-part of $|G|$ is $r^e$ and $r'$-part of $|G|$ divides $\prod_{i=1}^{d}(a^i-1)$ for some $a \in \{r,-r,r^2\}$, so, by \cite[p.~464]{ref:Artin55}, we deduce that if $p$ does not divide $r$, then $p^{v_p(|G|)} \leq (4r+4)^d$. In addition, from \cite[Table~5.4.C]{ref:KleidmanLiebeck}, if $p$ divides $r$, then $R_p(G) \geq c$. If $p$ does not divide $r$, then it is easy to deduce from \cite[Table~5.3.A]{ref:KleidmanLiebeck} that $R_p(G) \geq r^b-1$, where we use the fact that $G$ is not in \eqref{eq:p-part_computation}.

We now prove each part of the statement, but not in order.

\emph{\textbf{Part~(iii).}} Here we may assume that $p \neq r$. First assume that $d \geq 4$. Note that $(d,r) \not\in \{ (4,2), (4,3), (5,2) \}$ since $G$ is not in \eqref{eq:p-part_computation}. Then, by Lemma~\ref{lem:bound}(i), 
\[
v_p(|G|) \leq r^{d-2}-1 \leq r^b-1 \leq R_p(G).
\]
Now assume that $d \leq 3$. By \cite[Theorem~5.3.9]{ref:KleidmanLiebeck} (and since $G$ is not in \eqref{eq:p-part_computation}), $R_p(G) \geq a(r)$ where $a(r)$ is given in Table~\ref{tab:p-part_small}. With this it is easy to verify that 
\[
v_p(|G|) \leq d \cdot \log_p (4r+4) \leq a(r) \leq R_p(G).
\]

\begin{table}[b]
\begin{align*}
& \begin{array}{ccccccc}
\hline
G    & \PSL_2(r)               & \PSL_3(r) & \PSL_4(r) & \PSU_3(r) & \PSU_4(r)   & \PSp_4(r)          \\ 
a(r) & \frac{1}{(2,r-1)} (r-1) & r^2-1     & r^3-1     & r^2-r     & r^3-r^2+r-1 & \frac{1}{2}(r^2-1) \\
\hline
\end{array} \\
& \hspace{12mm} \begin{array}{ccccc}
\hline
\PSp_6(r)          & \Omega_7(r) & G_2(r) & {}^2B_2(r)              & {}^2G_2(r) \\
\frac{1}{2}(r^3-1) & r^4-1       & r^3-r  & \sqrt{\frac{r}{2}}(r-1) & r^2-r      \\
\hline
\end{array}
\end{align*}
\caption{The function $a(r)$ for groups $G$ with $b \leq 2$ or $d \leq 3$.} \label{tab:p-part_small}
\end{table}

\emph{\textbf{Part~(i).}} First assume that $p \neq r$. Then $v_p(|G|) \leq R_p(G) \leq P(G)$. Now assume that $p=r$. Then $|G|_p = r^e$ and we check that $v_p(|G|) \leq P(G)$ by consulting the values of $P(G)$ given in \cite[Theorem~5.2.2]{ref:KleidmanLiebeck} if $G$ is classical and the main theorems of \cite{ref:Vasilyev96,ref:Vasilyev97,ref:Vasilyev98} if $G$ is exceptional.

\emph{\textbf{Part~(ii).}} First assume that neither $p$ nor $2$ divides $r$. As in part~(iii), if $d \geq 4$, then
\[
v_p(|G|) \leq r^{d-2}-1 \leq r^b-1 \leq R_2(G) \leq 2n_G' \leq 2^{n_G'},
\] 
and if $d \leq 3$, then
\[
v_p(|G|) \leq d \cdot \log_p (4r+4) \leq a(r) \leq R_2(G).
\]

Next assume that $p$ divides $r$ (so $2$ does not divide $r$), and write $r = p^f$. If $b \geq 3$, then, noting that $e \leq (b+1)^2$, by Lemma~\ref{lem:bound}(ii), 
\[
v_p(|G|) = ef \leq (b+1)^2f \leq p^{bf}-1 = r^b-1 \leq R_2(G) \leq 2n_G' \leq 2^{n_G'}.
\]
Now assume that $b \leq 2$. From the information in Table~\ref{tab:p-part_small}, it is easy to check that $a(r) \geq e$, recalling that $G$ is not in \eqref{eq:p-part_computation}. Therefore, by Lemma~\ref{lem:fields}, 
\[
v_p(|G|) = ef \leq a(r)f \leq R_2(G) f \leq 2n_G' \leq 2^{n_G'}.
\]

Finally assume that $2$ divides $r$ (so $p$ does not divide $r$), and write $r=2^f$. Then, noting that $d \leq c$, we have
\[
v_p(|G|) \leq d \cdot \log_p (4 \cdot 2^f + 4) \leq 2^{df/2} \leq 2^{cf/2}
\]
unless perhaps $f=1$ and $d \leq 8$ (noting that $f \geq 5$ when $d = 1$ since $G$ is not in \eqref{eq:p-part_computation}). However, in these cases, by considering the possibilities for $|G|$ (in \textsc{Magma} \cite{ref:Magma}, say) we see that we still have $v_p(|G|) \leq 2^{c/2} = 2^{cf/2}$. Hence, by Lemma~\ref{lem:fields}, 
\[
v_p(|G|) \leq 2^{cf/2} \leq 2^{R_2(G)f/2} \leq 2^{n_G'}. \qedhere
\]
\end{proof}

\section{Linear variant on Isbell's Conjecture} \label{s:isbell}

The purpose of this section is to prove Theorem~\ref{thm:isbell}. In Section~\ref{ss:isbell_simple} we prove Proposition~\ref{prop:isbell_simple}, which is an important special case of Theorem~\ref{thm:isbell} that serves as the base case for our proof. In Section~\ref{ss:isbell_reduction} we prove Lemma~\ref{lem:isbell_reduction}, which is a technical result which will play a key role in reducing Theorem~\ref{thm:isbell} to the special case handled in Proposition~\ref{prop:isbell_simple}. In Section~\ref{ss:isbell_proof}, we complete the proof of Theorem~\ref{thm:isbell}, where the four main tools are the base case in Proposition~\ref{prop:isbell_simple}, the bounds in Proposition~\ref{prop:p-part}, the reduction in Lemma~\ref{lem:isbell_reduction} and \cite[Theorem~4]{ref:HarperLiebeck25}. The last of these is a recent representation theoretic result established by Harper and Liebeck.

\subsection{Base case for Theorem~\ref{thm:isbell}} \label{ss:isbell_simple}

This section is dedicated to proving a special case of Theorem~\ref{thm:isbell}. We first give two lemmas. In the first, we label the simple roots of indecomposable root systems according to the convention of Bourbaki \cite{ref:Bourbaki68}, which is consistent with \cite{ref:KleidmanLiebeck} (see, in particular, \cite[(5.2.2)]{ref:KleidmanLiebeck}).

\begin{lemma} \label{lem:e_roots}
Let $\ell \in \{6,7,8\}$, let $\Phi$ be the $E_\ell$ root system and let $\{ \alpha_1, \dots, \alpha_\ell \}$ be the set of simple roots of $E_\ell$. Let $\alpha  = \sum_{1 \leq i \leq \ell} m_i \alpha_i \in \Phi$. If $m_\ell > 0$ and $m_{\ell-1} = 0$, then $\alpha = \alpha_\ell$.
\end{lemma}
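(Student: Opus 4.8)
The plan is to use only elementary facts about root systems. First I would observe that since $m_\ell > 0$, the element $\alpha$ is a \emph{positive} root: every root of $\Phi$ is either a nonnegative or a nonpositive integer combination of the simple roots, and $m_\ell > 0$ excludes the latter, so $m_i \geq 0$ for all $i$. Next I would recall the shape of the $E_\ell$ Dynkin diagram in the Bourbaki labelling fixed before the lemma: for each $\ell \in \{6,7,8\}$ the branch node is $\alpha_4$, which is adjacent to $\alpha_2$, $\alpha_3$ and $\alpha_5$, and the nodes $\alpha_3, \alpha_5, \alpha_6, \dots, \alpha_\ell$ form an $A$-type tail; in particular $\alpha_\ell$ is an end node whose only neighbour is $\alpha_{\ell-1}$.

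Since $E_\ell$ is simply laced, I would normalise the $W$-invariant form so that $(\beta,\beta) = 2$ for every root $\beta$; then the Gram matrix of the simple roots satisfies $(\alpha_i,\alpha_i) = 2$, $(\alpha_i,\alpha_j) = -1$ when $\alpha_i$ and $\alpha_j$ are adjacent, and $(\alpha_i,\alpha_j) = 0$ otherwise. The key computation is
\[
(\alpha,\alpha_\ell) = \sum_{i=1}^{\ell} m_i\,(\alpha_i,\alpha_\ell) = 2m_\ell - m_{\ell-1} = 2m_\ell,
\]
where the middle equality uses that $\alpha_\ell$ is adjacent only to $\alpha_{\ell-1}$, and the last uses the hypothesis $m_{\ell-1} = 0$. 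By the Cauchy--Schwarz inequality,
\[
2m_\ell = (\alpha,\alpha_\ell) \leq \sqrt{(\alpha,\alpha)}\,\sqrt{(\alpha_\ell,\alpha_\ell)} = 2,
\]
so $m_\ell = 1$ because $m_\ell$ is a positive integer. Moreover equality holds in Cauchy--Schwarz, so $\alpha$ is a positive scalar multiple of $\alpha_\ell$, and comparing norms gives $\alpha = \alpha_\ell$.

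An alternative route, which I would keep in reserve, uses the standard fact (see \cite{ref:Bourbaki68}) that the support $\{\alpha_i : m_i > 0\}$ of a positive root is connected in the Dynkin diagram: a connected subset of the $E_\ell$ diagram containing the leaf $\alpha_\ell$ but not its unique neighbour $\alpha_{\ell-1}$ must equal $\{\alpha_\ell\}$, so $\alpha = m_\ell\alpha_\ell$, and since $\Phi$ is reduced this forces $m_\ell = 1$. Either way there is essentially no obstacle; the only point requiring care is to pin down the Bourbaki conventions precisely, so that $\alpha_\ell$ is indeed the end of the $A_{\ell-1}$-tail and hence has $\alpha_{\ell-1}$ as its sole neighbour.
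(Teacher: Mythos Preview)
Your argument is correct, and it is genuinely different from the paper's proof. The paper fixes explicit coordinates for the $E_\ell$ root systems in $\mathbb{R}^8$ following Bourbaki's Planches~V--VII, writes out $\alpha$ in terms of the standard basis $e_1,\dots,e_8$, and then for each $\ell \in \{6,7,8\}$ separately runs through the two possible shapes of a root (either $\pm e_i \pm e_j$ or a vector with all entries $\pm\tfrac12$) to pin down $\alpha$. Your approach is intrinsic: you identify that $\alpha_\ell$ is a leaf of the Dynkin diagram with unique neighbour $\alpha_{\ell-1}$, compute $(\alpha,\alpha_\ell) = 2m_\ell$, and then invoke Cauchy--Schwarz to force $m_\ell = 1$ and equality, hence $\alpha = \alpha_\ell$. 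This is shorter, uniform in $\ell$, and avoids any coordinate bookkeeping; the only external input is the shape of the diagram, which the paper already assumes via its reference to Bourbaki. Your reserve argument via connected support of a positive root is equally valid and arguably even cleaner. The paper's computational route has the minor advantage of being entirely self-contained once the coordinates are written down, but your version is the more illuminating one.
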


\begin{proof}
Following \cite[Planches~V--VII]{ref:Bourbaki68}, fix the standard basis $e_1, \dots, e_8$ for $\mathbb{R}^8$ and write
\begin{gather*}
\alpha_1 = \tfrac{1}{2}(e_1+e_8) - \tfrac{1}{2}(e_2+e_3+e_4+e_5+e_6+e_7), \quad \alpha_2 = e_1+e_2, \\
\alpha_3 = e_2 - e_1, \quad \alpha_4 = e_3 - e_2, \quad \alpha_5 = e_4 - e_3, \\
\alpha_6 = e_5 - e_4, \quad \alpha_7 = e_6 - e_5, \quad \alpha_8 = e_7 - e_6.
\end{gather*}
Then $\alpha = (a_1,\dots,a_8)$ is
\begin{gather*}
(\tfrac{1}{2}m_1 + m_2 - m_3, \  -\tfrac{1}{2}m_1 + m_2+m_3-m_4, \ -\tfrac{1}{2}m_1 + m_4-m_5, \ -\tfrac{1}{2}m_1 + m_5-m_6, \\
-\tfrac{1}{2}m_1 + m_6 - m_7, \ -\tfrac{1}{2}m_1 + m_7 - m_8, \ -\tfrac{1}{2}m_1 + m_8, \ \tfrac{1}{2}m_1).
\end{gather*}

First assume that $\ell = 8$, so $m_8 > 0$ and $m_7 = 0$. 
If all coordinates of $\alpha$ are $\pm \frac{1}{2}$, 
then $a_7 = -\frac{1}{2}m_1+m_8 = \frac{1}{2}$, so $m_1 = m_8 = 1$, but then $a_6 = -\frac{3}{2}$, a contradiction. 
Therefore, $\alpha$ is a permutation of $(1,-1,0,0,0,0,0,0)$. 
In particular, $m_8 = 1$, $a_7 = -\frac{1}{2}m_1+1 \in \{1,0,-1\}$ and $a_6 = -\frac{1}{2}m_1-1 \in \{1,0,-1\}$, 
so $m_1 = 0$ and $(a_6,a_7,a_8) = (-1,1,0)$. Therefore, $\alpha = \alpha_6$.

Next assume that $\ell = 7$, so $m_8 = m_6 = 0$ and $m_7 > 0$. 
If all coordinates of $\alpha$ are $\pm \frac{1}{2}$, 
then $a_6 = -\frac{1}{2}m_1+m_7 = \frac{1}{2}$, so $m_1 = m_7 = 1$, but then $a_5 = - \frac{3}{2}$, a contradiction. 
Therefore, $\alpha$ is a permutation of $(1,-1,0,0,0,0,0,0)$. 
In particular, $m_7 = 1$, $a_6 = -\frac{1}{2}m_1+1 \in \{1,0,-1\}$ and $a_5 = -\frac{1}{2}m_1-1 \in \{1,0,-1\}$, 
so $m_1 = 0$ and $(a_5,a_6,a_7,a_8) = (-1,1,0,0)$. Therefore, $\alpha = \alpha_7$.

Finally assume that $\ell = 6$, so $m_8 = m_7 = m_5 = 0$ and $m_6 > 0$. 
If all coordinates of $\alpha$ are $\pm \frac{1}{2}$,
then $a_5 = -\frac{1}{2}m_1+m_6 = \frac{1}{2}$, so $m_1 = m_6 = 1$, but then $a_4 = - \frac{3}{2}$, a contradiction. 
Therefore, $\alpha$ is a permutation of $(1,-1,0,0,0,0,0,0)$. 
In particular, $m_6 = 1$ and $a_5 = -\frac{1}{2}m_1+1 \in \{1,0,-1\}$ and $a_4 = -\frac{1}{2}m_1-1 \in \{1,0,-1\}$, 
so $m_1 = 0$ and $(a_4,a_5,a_6,a_7,a_8) = (-1,1,0,0,0)$. Therefore, $\alpha = \alpha_6$.
\end{proof}

For the second lemma, we write $C_V(g)$ for the $1$-eigenspace of $g \in \GL(V)$.

\begin{lemma} \label{lem:fixed_space}
Let $p$ be prime and let $V = \F_q^n$ where $q = p^f$. Let $g \in \GL_n(q)$ and assume that $|g|$ divides $q^e - 1$ but is coprime to $q^i-1$ for all $1 \leq i < e$. Then $\dim C_V(g) \equiv \dim V \mod e$.
\end{lemma}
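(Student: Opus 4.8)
The plan is to decompose $V$ into eigenspaces of $g$ over an extension field and track how the Frobenius map permutes them. Let $\bar g \in \GL_n(\overline{\F}_q)$ be $g$ viewed over the algebraic closure; since $|g|$ divides $q^e-1$, the element $\bar g$ is diagonalisable with eigenvalues lying in $\F_{q^e}^\times$, so $V \otimes_{\F_q} \F_{q^e} = \bigoplus_{\lambda} V_\lambda$ where $\lambda$ ranges over the eigenvalues of $g$ and $V_\lambda$ is the $\lambda$-eigenspace (all defined over $\F_{q^e}$). First I would observe that the Frobenius $\phi\colon x \mapsto x^q$ acts semilinearly on $V \otimes \F_{q^e}$, fixing $V$, and sends $V_\lambda$ to $V_{\lambda^q}$; hence $\phi$ permutes the set of eigenspaces, with the orbit of $V_\lambda$ having size equal to the multiplicative order of $q$ modulo $|\lambda|$ — equivalently the least $i$ with $\lambda^{q^i} = \lambda$, i.e. with $\lambda \in \F_{q^i}$.

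The key number-theoretic input is the hypothesis that $|g|$ is coprime to $q^i-1$ for all $1 \leq i < e$: this forces every eigenvalue $\lambda \neq 1$ to have order dividing $q^e-1$ but not $q^i-1$ for $i<e$, so $\lambda \notin \F_{q^i}$ for any $i<e$, and therefore the $\phi$-orbit of $V_\lambda$ has size exactly $e$. (Here I should note $\dim V_\lambda = \dim V_{\lambda^q}$, since $\phi$ is a bijection between them.) Meanwhile $\lambda = 1$ gives the single $\phi$-fixed eigenspace $V_1$, whose $\F_{q^e}$-dimension equals $\dim_{\F_q} C_V(g)$ because $V_1$ is $\phi$-stable and $V_1^\phi = C_V(g)$. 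Summing dimensions over all eigenvalues: $\dim V = \dim C_V(g) + \sum (\text{orbit contributions})$, and each non-trivial $\phi$-orbit contributes $e$ times the common dimension of the eigenspaces in it. Hence $\dim V - \dim C_V(g)$ is a multiple of $e$, which is the claim.

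The only mild subtlety — and the step I would be most careful about — is the case distinction on whether $1$ is actually an eigenvalue of $g$: if $g$ is fixed-point-free then $C_V(g) = 0$ and there is no trivial eigenspace, but then $\dim V$ itself is a sum of $\phi$-orbits each of size a multiple of $e$ (in fact exactly $e$), so $e \mid \dim V \equiv \dim V - 0$, consistent with the statement. I would also want to make the semilinearity bookkeeping precise: writing $W = V \otimes_{\F_q} \overline{\F}_q$ with its $\phi$-action, the fixed points $W^\phi = V$, and for any $\phi$-stable $\overline{\F}_q$-subspace $U$ one has $\dim_{\overline{\F}_q} U = \dim_{\F_q} U^\phi$ by Galois descent (Lang's theorem / Hilbert 90 in the split form). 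Applying this to $V_1$ and to each union of eigenspaces forming a $\phi$-orbit (which is $\phi$-stable and defined over $\F_q$) completes the count; no serious obstacle remains beyond assembling these standard facts.
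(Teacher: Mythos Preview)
Your argument is correct and reaches the conclusion via a route dual to the paper's. The paper works entirely over $\F_q$: it decomposes $V$ into irreducible $\F_q\langle g\rangle$-submodules $V_i$ (possible since $g$ is semisimple), and for each $V_i$ on which $g$ acts nontrivially it observes that the order $|g_i|$ of the restriction divides both $q^{\dim V_i}-1$ and $q^e-1$, hence $q^{(\dim V_i,\,e)}-1$; the coprimality hypothesis then forces $(\dim V_i,e)=e$, so every nontrivial summand has dimension divisible by $e$. You instead base-change to $\F_{q^e}$, diagonalise, and count Frobenius orbits on the eigenvalues, each of length exactly $e$ when the eigenvalue is $\neq 1$. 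The two pictures match precisely --- an irreducible $\F_q\langle g\rangle$-summand of dimension $d$ becomes, after extension, a single Frobenius orbit of $d$ one-dimensional eigenspaces --- so neither argument is deeper than the other. The paper's version is marginally more self-contained (no semilinear bookkeeping or descent), while yours makes the Galois structure explicit. One small simplification to your write-up: you do not need Galois descent to identify $\dim_{\F_{q^e}} V_1$ with $\dim_{\F_q} C_V(g)$, since $V_1 = C_V(g)\otimes_{\F_q}\F_{q^e}$ directly (fixed points of a linear map commute with flat base change).
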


\begin{proof}
Since $g$ is semisimple, $\< g \>$ stabilises a direct sum decomposition $V = \bigoplus_{1 \leq i \leq k} V_i$, acting irreducibly on $V_i$ for each $1 \leq i \leq k$. For each $1 \leq i \leq k$, let $g_i \in \GL(V_i)$ be the restriction of $g$ to $V_i$. Reordering the summands if necessary, we may fix $1 \leq \ell \leq k$ such that $g_i = 1$ if and only if $i > \ell$. This means that $\dim C_V(g) = \dim V - \sum_{1 \leq i \leq \ell} \dim V_i$.

Let $1 \leq i \leq \ell$, so $|g_i| > 1$. Since $g_i$ is irreducible, $|g_i|$ divides $q^{\dim V_i}-1$, but $|g_i|$ also divides $|g|$, which divides $q^e-1$, so $|g_i|$ divides $q^{(\dim V_i, e)}-1$. However, $|g|$ is coprime to $q^i-1$ for all $1 \leq i < e$, which implies that $(\dim V_i,e) = e$, or said otherwise, $e$ divides $\dim V_i$. Therefore, $e$ divides $\sum_{1 \leq i \leq \ell} \dim V_i$, which proves that $\dim C_V(g) \equiv \dim V \mod e$. 
\end{proof}

\begin{proposition} \label{prop:isbell_simple}
Let $p$ be prime, let $T$ be a finite simple group of Lie type defined over $\F_q$ where $q=p^f$ and let $\rho\: T \to \PGL_n(\F_{p^a})$ be a faithful absolutely irreducible projective representation. Let $T \leq A \leq \Aut(T)$, let $\widetilde{M}$ be a maximal subgroup of $A$ not containing $T$ and let $M = \widetilde{M} \cap T$. Assume that $p$ divides $|M|$ and $p^{an}$ divides $|T:M|$. Then there exists $g \in T$ such that $g$ is a derangement in the action of $T$ on $T/M$ and $g\rho$ fixes a nonzero vector in $\F_{pa}^n$.
\end{proposition}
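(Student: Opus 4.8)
The plan is to produce $g$ as a semisimple element of $T$ of prime order $r$, where $r$ is a primitive prime divisor of $q^e-1$ for a value of $e$ chosen according to $T$, $M$ and $n$. Given such an $e$, the element $g$ is automatically a derangement in the action of $T$ on $T/M$ as soon as $r \nmid |M|$, since no conjugate of $g$, having order $r$, can lie in a subgroup of order coprime to $r$. The delicate point is that $g\rho$ must fix a nonzero vector of $\F_{p^a}^n$, and this is exactly where Lemma~\ref{lem:fixed_space} enters. We first lift $\rho$ to a linear representation on $W = \F_{p^a}^n$ of a central extension of $T$; this is harmless because $r$ exceeds the order of the Schur multiplier of $T$ (primitive prime divisors of $q^e-1$ are large for the $e$ we use), so $g$ has a preimage $\hat g \in \GL_n(\F_{p^a})$ of order $r$, and $\hat g$ is not scalar since $\rho$ is faithful. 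Then $\hat g$ is semisimple with eigenvalues among the primitive $r$-th roots of unity together with $1$, so Lemma~\ref{lem:fixed_space}, applied with the field $\F_{p^a}$ in place of $\F_q$ and with $e' := \mathrm{ord}_{p^a}(r)$ in place of $e$, gives $\dim C_W(\hat g) \equiv n \mod{e'}$. Consequently $g\rho$ fixes a nonzero vector provided $e' \nmid n$. (Note that if instead some $p$-element of $T$ were a derangement on $T/M$ we would be done immediately, as $p$-elements act unipotently under $\rho$; but we do not rely on this.)

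Everything thus reduces to choosing, for each simple group of Lie type $T$ over $\F_q$ and each admissible pair $(M,\rho)$, a value of $e$ for which $q^e-1$ has a primitive prime divisor $r$ with $r \mid |T|$, with $r \nmid |M|$, and with $\mathrm{ord}_{p^a}(r) \nmid n$; the finitely many pairs $(q,e)$ for which Zsigmondy's theorem fails will be folded into the treatment of small groups. The condition $r \mid |T|$ forces $e$ to divide a degree of $T$ (suitably interpreted for twisted types), and here the hypothesis $p^{an} \mid |T:M|$ first bites: since $an \geq 1$ it gives $v_p(|M|) < v_p(|T|)$, so $M$ contains no Sylow $p$-subgroup of $T$, and in particular $M$ is not a parabolic subgroup. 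Taking $e$ to exceed the Coxeter numbers of all proper Levi subgroups of the ambient algebraic group then forces $r$ to divide the order of no proper parabolic, which disposes of every $M$ contained in a parabolic. For $M$ contained in no parabolic we appeal to the Guralnick--Penttila--Praeger--Saxl classification of subgroups of classical groups containing a primitive prime divisor element, and to the analogous results of Liebeck--Seitz and others for the exceptional groups: this leaves a short explicit list of possibilities for $M$ (classical subgroups, subfield subgroups, normalisers of field-extension tori, and a few others), each of which is handled by passing to a different $e$ dividing a degree of $T$ but not of $M$, or by combining $p \mid |M|$ with the bound $v_p(|M|) \leq v_p(|T|) - an$ and Proposition~\ref{prop:p-part} (which controls $v_p$ of the composition factors of $M$ when $M$ has no large section in characteristic $p$), or, for groups of small rank or over small fields, by direct computation.

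It remains to arrange $\mathrm{ord}_{p^a}(r) \nmid n$ alongside the above. Since $an \leq v_p(|T|)$, the integer $n$ is bounded and so has few divisors, whereas there are several candidate values of $e$ (those dividing a degree of $T$); a type-by-type check, using the known dimensions of the irreducible $\F_{p^a}T$-modules (L\"ubeck's tables for groups of Lie type, explicit data in the remaining cases), shows that among the $e$ surviving the previous paragraph there is always one with $\mathrm{ord}_{p^a}(r) \nmid n$. The exceptional groups $E_\ell(q)$ with $\ell \in \{6,7,8\}$ are where Lemma~\ref{lem:e_roots} is used: the relevant $\rho$ are the minimal and adjoint modules, and for $g$ in a $\Phi_e$-torus one reads off $\dim C_W(\hat g)$ from the weights of the module — which, for the adjoint module, are the roots — with Lemma~\ref{lem:e_roots} pinning down exactly which root vectors $\hat g$ centralises. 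For any adjoint-type module the $\ell$-dimensional Cartan part already guarantees a nonzero fixed vector, so the genuine work there concerns the minimal modules of $E_6$ and $E_7$.

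The main obstacle is that the conditions "$r \nmid |M|$" and "$\mathrm{ord}_{p^a}(r) \nmid n$" must be met by a single choice of $e$ \emph{simultaneously}, for every type of $T$ and every admissible $M$; since the admissible $e$ are just the divisors of the (few) degrees of $T$, while $n$ is only bounded rather than determined and $M$ ranges over a genuine family of subgroups, establishing that a good $e$ always exists is an extended case analysis over the finite simple groups of Lie type, their maximal subgroups, and their modular representation theory — with the exceptional groups $E_6$, $E_7$, $E_8$ and the groups over small fields (where one also clears up the Zsigmondy exceptions by computation) being the most demanding.
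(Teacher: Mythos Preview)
Your strategy of using primitive-prime-divisor elements together with Lemma~\ref{lem:fixed_space} is indeed part of the paper's toolkit (it is exactly how the spin-module cases are handled), but you are missing the paper's decisive opening move. Before any elements are chosen, the paper applies Steinberg's twisted tensor product theorem together with the hypothesis $an < v_p(|T|) = fe$ to show that the building block $W$ of $V$ has dimension $t < e$, and this forces $W$ to be one of a handful of modules: the natural module for a classical group, a spin module for $B_3$, $C_3$ or $D_5^\pm$, or the minimal module for $E_6$ or $E_7$. That reduction is what makes the case analysis finite and tractable. Without it, your proposed ``type-by-type check'' over all irreducible $\F_{p^a}T$-modules via L\"ubeck's tables is an open-ended undertaking, and you have not actually demonstrated that a single $e$ exists with $r \nmid |M|$ and $e' \nmid n$ simultaneously for every admissible $(M,\rho)$; you have only asserted it. Incidentally, for the natural-module cases the paper does not need Lemma~\ref{lem:fixed_space} at all: it takes $g = g_1 \oplus 1$ with $g_1$ irreducible on a codimension-$\iota$ summand, so a fixed vector is built in by construction.

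Your account of the $E_\ell$ cases also misreads Lemma~\ref{lem:e_roots}. The adjoint module never arises (its dimension exceeds $e$, so it is excluded by the bound $t < e$; for $E_8$ this even rules out the group entirely), and the lemma is not used to compute fixed spaces of torus elements on weight spaces. Rather, for $T = E_6(q)$ (resp.\ $E_7(q)$) the paper realises the minimal module as the abelianised unipotent radical $Q$ of the end-node parabolic of $E_7$ (resp.\ $E_8$), and Lemma~\ref{lem:e_roots} is fed into the Chevalley commutator formula to show that the \emph{entire} Levi factor $K_0 \cong \Spin_{10}^+(q)$ (resp.\ $(3,q{-}1).E_6(q)$) commutes with the root element $x_{\alpha_\ell}(t)$, hence fixes the corresponding vector of $V$. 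The derangement $g$ is then taken inside $K_0$, not in a $\Phi_e$-torus, and one only has to check that a suitable element of $K_0$ avoids all conjugates of $M$.
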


\begin{proof}
Fix $a_0 \leq a$ such that $\rho$ is expressible over no proper subfield of $\F_{p^{a_0}}$ and consider the corresponding projective representation $\rho_1\: T \to \PGL_n(p^{a_0})$. Let $G$ be a perfect central extension of $T$ such that $\rho_1$ lifts to a faithful representation $\rho_2\: G \to \GL_n(p^{a_0})$. Let $S$ be the full covering group of $T$ and consider the corresponding representation $\lambda\: S \to \GL_n(p^{a_0})$. Note that $\lambda$ is an absolutely irreducible representation expressible over no proper subfield. Let $V = \F_{p^{a_0}}^n$ be the module afforded by $\lambda$, and let $U = \F_{q^u}^m$ be the minimal module for $S$, so $u \in \{1,2,3\}$ depending on $T$.

First assume that $T$ is very twisted (that is, $T \in \{ {}^2B_2(q), {}^2G_2(q), {}^2F_4(q) \}$), so $f$ is odd. Here, by \cite[Remark~5.4.7(ii)]{ref:KleidmanLiebeck}, $n \geq m^{f/a_0}$, but this means that
\[
fm \leq a \cdot f/a_0 \cdot m \leq a m^{f/a_0} \leq  an < v_p(|T|) = fe
\]
where $e$ is given in Table~\ref{tab:p-part_abcd}. Noting $m \geq c$ in Table~\ref{tab:p-part_abcd}, we obtain a contradiction. 

Now assume that $T$ is not very twisted. If $T$ is untwisted, then, by \cite[Proposition~5.4.6(i)]{ref:KleidmanLiebeck}, $a_0$ divides $f$ and there exists an absolutely irreducible $\F_{p^f}S$-module $W = \F_{p^f}^t$ such that $n = t^{f/a_0}$ and 
\[
V \otimes \F_{p^f} = W \otimes W^{(a_0)} \otimes \cdots \otimes W^{(f-a_0)}.
\]

If $T$ has type ${}^2A_\ell$, ${}^2D_\ell$ or ${}^2E_6$, then, by \cite[Proposition~5.4.6(ii)]{ref:KleidmanLiebeck} $a_0$ divides $2f$ and there exists an absolutely irreducible module $W = \F_{p^f}^t$ such that either $W$ is stable under the graph automorphism, $n = t^{f/a_0}$ and 
\[
V \otimes \F_{p^f} = W \otimes W^{(a_0)} \otimes \cdots \otimes W^{(f-a_0)},
\] 
or $W$ is not stable under the graph automorphism, $n = t^{2f/a_0}$ and 
\[
V \otimes \F_{p^{fu}} = W \otimes W^{(a_0)} \otimes \cdots \otimes W^{(2f-a_0)}.
\] 
In both cases, $n \geq t^{f/a_0}$.

If $T$ has type ${}^3D_4$, then, by \cite[Remark~5.4.7(i)]{ref:KleidmanLiebeck} there exists an absolutely irreducible module $W = \F_{p^f}^t$ such that either $W$ is stable under the graph automorphism and $n = t^{f/a_0}$, $W$ is not stable under the graph automorphism and $n = t^{3f/a_0}$.

In all cases,
\[
ft \leq a \cdot f/a_0 \cdot t \leq e t^{f/a_0} \leq an  < v_p(|T|) = fe
\]
where, again, $e$ is given in Table~\ref{tab:p-part_abcd} and $m \geq c$ in Table~\ref{tab:p-part_abcd}. By \cite[Propositions~5.4.11 and~5.4.12]{ref:KleidmanLiebeck}, the condition $c \leq m \leq t < e$ forces one of the following
\begin{enumerate}
\item $T$ is classical, ${}^3D_4(q)$, $E_6^\pm(q)$ or $E_7(q)$ and $W$ is quasiequivalent to $U$
\item $T$ is $B_3(q)$, $C_3(q)$ or $D^\pm_5(q)$ and $W$ is quasiequivalent to the spin module (of dimension 8, 8, 16, respectively).
\end{enumerate}

If $T = {}^2E_6(q)$, then $t < e = 36$ implies that $W$ is not stable under the graph automorphism (see \cite[Proposition~5.4.8]{ref:KleidmanLiebeck}), so the argument above implies that we actually have $2m \leq 2t < e$, which gives a contradiction, so the case $T = {}^2E_6(q)$ does not arise. Similarly, if $T = {}^3D_4(q)$, then $t < e = 8$ again implies that $W$ is not stable under the graph automorphism, so $3m \leq 3t < e$, which is another contradiction, so $T = {}^3D_4(q)$ does not occur either.

We now divide the remaining possibilities into several cases depending on $T$.

\emph{\textbf{Case~1.} $T \in \{ \PSL_2(q), \PSL_3(q), \PSU_3(q), \PSp_4(q) \}$.}\nopagebreak

If $T = \PSL_2(q)$, then the $p$-part of $T$ is $q$, so there are no subgroups $M$ of $T$ such that $q^{\dim V} = q^2$ divides $|T:M|$ and the result follows vacuously. Otherwise, the $p$-part of $|T|$ is $q^{\dim V}$, so $p$ does not divide $|M|$, which means that any element $g \in T$ of order $p$ is a derangement in the action of $T$ on $T/M$ and fixes a nonzero vector of $V$ since $g$ is unipotent.

\emph{\textbf{Case~2.} $T$ is a classical group not in Case~1.}\nopagebreak

We will begin by excluding three small cases. If $T = \PSU_4(3)$ and $M = 2^4.A_6$, then any element of order $7$ satisfies the theorem. If $T = \PSU_6(2)$ and $\soc(M) = \PSU_4(3)$, then any element of order $11$ satisfies the theorem. If $T = \Omega_7(5)$ and $M = 2^6{:}A_7$, then any element of order $25$ satisfies the theorem. For the rest of the proof, we will assume that $(T,M)$ is not one of these possibilities.

Since any element of order $p$ fixes a nonzero vector of $V$, it suffices to assume that $T$ has no derangement of order $p$ in the action on $T/M$. By \cite[Theorems~5.3.1, 5.4.1, 5.6.1, 5.7.1 \& 5.9.1]{ref:BurnessGiudici16}, $M$ is not in the Aschbacher class $\C_3$, $\C_4$, $\C_6$, $\C_7$ or $\N$, and by \cite[Theorem~1]{ref:BurnessGiudici18}, $M$ is not in $\S$ (here we are making use of the fact that we excluded the first two small cases in the previous paragraph). Therefore, one of the following holds:
\begin{enumerate}[(a)]
\item $M$ is reducible, or $q$ is even, $T = \Sp_n(q)$ and $M = \mathrm{O}^\pm_n(q)$ 
\item $M$ is irreducible but imprimitive
\item $M$ is expressible over a proper subfield of $\F_{q^u}$
\item $M$ is a classical group with natural module $V$ (here $T = \PSL_n(q)$).
\end{enumerate}

For now assume that $V$ is the natural module for $T$. 

Let $\iota = 1$ if $T \in \mathcal{A}_1$ and let $\iota = 2$ if $T \in \mathcal{A}_2$, where
\begin{align*}
\mathcal{A}_1 &= \{ \PSL_n(q), \ \PSU_n(q) \, \text{($n$ even)}, \ \Omega_n(q) \, \text{($n$ odd)} \} \\
\mathcal{A}_2 &= \{ \PSU_n(q) \, \text{($n$ odd)}, \ \PSp_n(q) \, \text{($n$ even)}, \ \POm^\pm_n(q) \, \text{($n$ even)} \}.
\end{align*}
Fix a direct sum decomposition $V = V_1 \oplus V_2$ with $\dim V_2 = \iota$. Let $g = g_1 \oplus g_2 \in T$ be an element that stabilises this decomposition of maximal possible order such that $g_1$ acts irreducibly on $V_1$ and $g_2$ acts trivially on $V_2$. Notice that such elements do exist by the choice of $\iota$. Note that $g$ fixes a nonzero vector of $V$. We consider the cases (a)--(d) above.

First suppose that (a) holds. Since $g$ acts irreducibly on a nondegenerate $\ell$-space, one of the following holds:
\begin{enumerate}
\item $T \in \{ \PSL_n(q), \PSU_n(q), \Omega_n(q) \}$ and $M$ is the stabiliser of a nondegenerate $1$-space
\item $T \in \{ \PSU_n(q), \PSp_n(q), \POm^\pm_n(q) \}$ and $M$ is the stabiliser of a totally singular $1$-space or a nondegenerate $2$-space
\item $T  = \Sp_n(q)$ and $M = \mathrm{O}^\pm_n(q)$.
\end{enumerate}
It suffices to prove that $q^{un}$ does not divide $|T:M|$. If $M$ is a parabolic subgroup, then $M$ contains a Sylow $p$-subgroup of $T$, so $|T:M|$ is not divisible by $p$ and the result follows. If $M$ is not a parabolic subgroup, then the result can be readily checked via the order formulae in \cite[Table~5.1.A]{ref:KleidmanLiebeck}. For instance, in (iii), writing $n=2m$, the $p$-part of $|T|$ is $q^{m^2}$ and the $p$-part of $|M|$ is $q^{m^2-m}$, so the $p$-part of $|T:M|$ is $q^m$, which is not divisible by $q^n$. 

Next suppose that (b) holds. Consulting the main theorem of \cite{ref:GuralnickPenttilaPraegerSaxl97} for example (specifically Example~2.3), we see that $\ell+1$ is the only primitive prime divisor of $(q^u)^\ell-1$, $M$ is the stabiliser of a decomposition of $V$ into $1$-spaces and $g$ induces an $(\ell+1)$-cycle on these $1$-spaces. However, the order of $g$ is impossibly large for this to hold. For example, if $T = \PSU_n(q)$ and $n$ is odd, then $|g| = (q^{n-2}+1)/(q+1)$, which strictly exceeds $(n-1) (q+1)$, which is at least the maximal element order of an element of $M$. Therefore, (b) does not hold.

Now suppose that (c) holds. The order of $g$ is divisible by a primitive prime divisor of $q^{u\ell}-1$. In all cases, $\ell > n/2$, so $M$ does not contain any elements whose order is divisible by a primitive prime divisor of $(q^u)^\ell-1$. Therefore, (c) does not hold.

Now suppose that (d) holds. Here $T = \PSL_n(q)$ and $g$ has order $\frac{q^{n-1}-1}{q-1}$, which exceeds the order of any element of $M$ that has order divisible by a primitive prime divisor of $q^{n-1}-1$. Therefore, (d) does not hold.

It remains to assume that $V$ is the spin module for $T$. 

First assume that $T$ is $\Sp_6(q)$ with $q$ even or $\Omega_7(q)$ with $q$ odd. Let $r$ be the greatest divisor of $q^6-1$ that is coprime to $q^i-1$ for $1 \leq i < 6$, and let $g \in T$ have order $r$, which exists since $r$ divides $q^3+1$. (If $q=2$, then $r=1$, but this will not cause problems.) Since $\dim V = 8 \equiv 2 \mod 6$, Lemma~\ref{lem:fixed_space} implies that $g$ fixes a nonzero vector of $V$. It remains to prove that $g$ is a derangement on $T/M$. The $p$-part of $|T|$ is $q^9$ and $q^{\dim V} = q^8$ divides $|T:M|$, so the $p$-part of $|M|$ is at most $q$. Consulting the list of maximal subgroups of $T$ in \cite[Tables~8.28 \&~8.39]{ref:BrayHoltRoneyDougal} and bearing in mind that we are assuming that $|M|$ satisfies one of (a)--(d) above, we see that the only possibilities are that $M$ is a subfield subgroup defined over $\F_{q^{1/k}}$ where $k \geq 5$ (so, in particular, $q \geq 2^5$) or $T = \Omega_7(q)$ with $q > 3$ and $M$ is an imprimitive subgroup of type $\mathrm{O}_1(q) \wr S_7$. The former case can be eliminated on the grounds that $|M|$ is not divisible by a primitive prime divisor of $q^6-1$ (which exists since $q > 2$), so assume that $T = \Omega_7(q)$ and $M$ is an imprimitive subgroup of type $\mathrm{O}_1(q) \wr S_7$. In light of the opening paragraph of Case~2, we may assume that $q \neq 5$. Note that $r \equiv 1 \mod 6$ and, since $q \not\in \{3,5\}$, by \cite[Theorem~3.9]{ref:Hering74}, $r \geq 13$. As such $S_7$ contains no elements of order $r$, which excludes this possibility for $M$. 

Now assume that $T$ is $\POm^\pm_{10}(q)$. If $\e = +$, then let $r$ be the greatest divisor of $q^5-1$ that is coprime to $q^i-1$ for $1 \leq i < 5$, and if $\e = -$, then let $r$ be the greatest divisor of $q^{10}-1$ that is coprime to $q^i-1$ for $1 \leq i < 10$. Since $\dim V = 16 \equiv 1 \mod 5$, Lemma~\ref{lem:fixed_space} implies that $g$ fixes a nonzero vector of $V$. It remains to prove that $g$ is a derangement on $T/M$. The $p$-part of $|T|$ is $q^{20}$ and $q^{\dim V} = q^{16}$ divides $|T:M|$, so the $p$-part of $|M|$ is at most $q^4$.Consulting \cite[Tables~8.66 \&~8.68]{ref:BrayHoltRoneyDougal}, the only possibilities are that $M$ is a subfield subgroup defined over $\F_{q^{1/k}}$ where $k \geq 5$ or $M$ is an imprimitive subgroup of type $\mathrm{O}_1(q) \wr S_{10}$ or $\mathrm{O}^\pm_2(q) \wr S_5$. In the former case, $|M|$ is not divisible by a primitive prime divisor of $q^5-1$ or $q^{10}-1$, so assume that $M$ is imprimitive. If $\e = +$, then $r \equiv 1 \mod 5$ and  \cite[Theorem~3.9]{ref:Hering74} implies that $r \geq 11$, and if $\e = -$, then $r \equiv 1 \mod {10}$, so again, $r \geq 11$. Therefore, in both cases, $M$ is excluded since $S_{10}$ contains no elements of order $r$.

\emph{\textbf{Case~3.} $T$ is $E_6(q)$ or $E_7(q)$ and $V$ is the minimal module.}\nopagebreak

We follow the description of the minimal module for $T$ given in \cite[p.203]{ref:KleidmanLiebeck}.  We divide into two similar cases. 

\emph{\textbf{Case~3a.} $T = E_6(q)$.}\nopagebreak

Let $d = (3,q-1)$ and let $\widetilde{T} = d.T$ be the full covering group of $T$. Let $P$ be the $P_7$ parabolic subgroup of $E_7(q)$, with Levi decomposition $P = QL$. Then $Q$ is an elementary abelian group of order $q^{27}$ and $L = KH$ where $K \cong \widetilde{T}$ and $H$ is a Cartan subgroup of $E_7(q)$ that normalises $Q$. Identifying $V$ with $Q$ and $\widetilde{T}$ with $K$, the action of $T$ on $V$ is obtained via the conjugation action of $\widetilde{T}$ on $Q$. Let $P_0$ be the $P_6$ parabolic of $E_6(q)$, with Levi decomposition $P_0 = Q_0L_0$. Then $L_0 = K_0H_0$ where $K_0 \cong \Spin^+_{10}(q)$ and $H_0$ is a Cartan subgroup of $E_6(q)$ that normalises $Q_0$. Let $\widetilde{K}_0$ subgroup of $\widetilde{T}$ corresponding to $K_0$.

Let $\Phi$ be the $E_7$ root system with simple roots $\{ \alpha_1, \dots, \alpha_7 \}$. Then 
\[
Q = \< X_\alpha \mid \text{$\alpha = \sum_{1 \leq i \leq 7} m_i \alpha_i \in \Phi$ with $m_i \geq 0$ and $m_7 > 0$} \>.
\]
Let $t$ be an nonzero element of $\F_q$, and under the identification of $V$ with $Q$, let $v \in V$ be the nonzero vector corresponding to the nontrivial element $x_{\alpha_7}(t)$ of the root subgroup $X_{\alpha_7}$. We claim that every element of $K_0$ fixes $v$. Said otherwise, we claim that every element of $\widetilde{K}_0$ commutes with $x_{\alpha_7}$. Let $x_\alpha(s) \in \widetilde{K}_0$ where $s \in \F_q$ and $\alpha \in \Phi$. We may write $\alpha = \sum_{1 \leq i \leq 5} m_i \alpha_i$ since, by construction,
\[
\widetilde{K}_0 = \< X_{\alpha} \mid \text{$\alpha = \sum_{1 \leq i \leq 7} m_i \alpha_i \in \Phi$ with $m_6 = m_7 = 0$} \>.
\]
Hence, Lemma~\ref{lem:e_roots} implies that $j\alpha + k\alpha_7 \not\in \Phi$ for all $j,k>0$, so, by the Chevalley commutator formula,
\[
[x_\alpha(s),x_{\alpha_7}(t)] = \prod_{j,k > 0} x_{j\alpha+k\alpha_7}(C^{j,k}_{\alpha,\alpha_7}s^jt^k) = 1,
\]
which proves the claim (here $C^{j,k}_{\beta,\gamma}$ are the structure constants).

Therefore, it suffices to identify an element $g \in K_0$ that is a derangement on $T/M$. Let $g \in K_0 \cong \Spin^+_{10}(q)$ have order $(q^4+1)(q+1)$. The possibilities for $M$ are given \cite[Tables~2 \&~9]{ref:Craven23}. The $p$-part of $|T|$ is $q^{36}$ and $q^{27}$ divides $|T:M|$, so the $p$-part of $|M|$ is at most $q^9$. In particular, $M$ is not a parabolic subgroup, a subfield subgroup defined over $\F_{q^{1/2}}$ or a subgroup of type $C_4(q)$, $F_4(q)$ or $D_5(q) \times (q-1)$. Since $|g|$ is at least $51$ and divisible by a primitive prime divisor of $q^8-1$, we quickly deduce that $g$ is not contained in any of the remaining possibilities for $M$.

\emph{\textbf{Case~3b.} $T = E_7(q)$.}\nopagebreak

Let $d = (2,q-1)$ and let $\widetilde{T} = d.T$ be the full covering group of $T$. Let $P$ be the $P_8$ parabolic subgroup of $E_8(q)$, with Levi decomposition $P = QL$. Then $Q \cong q^{1+56}$ and $L = KH$ where $K \cong \widetilde{T}$ and $H$ is a Cartan subgroup of $E_8(q)$ that normalises $Q$. Identifying $V$ with $Q/Z(Q) \cong q^{56}$ and $\widetilde{T}$ with $K$, the action of $T$ on $V$ is obtained via the conjugation action of $\widetilde{T}$ on $Q$. Let $P_0$ be the $P_7$ parabolic of $E_7(q)$, with Levi decomposition $P_0 = Q_0L_0$. Then $L_0 = K_0H_0$ where $K_0 \cong (3,q-1).E_6(q)$ and $H_0$ is a Cartan subgroup of $E_7q)$ that normalises $Q_0$. Let $\widetilde{K}_0$ subgroup of $\widetilde{T}$ corresponding to $K_0$.

Let $\Phi$ be the $E_8$ root system with simple roots $\{ \alpha_1, \dots, \alpha_8 \}$. Then 
\[
Q = \< X_\alpha \mid \text{$\alpha = \sum_{1 \leq i \leq 8} m_i \alpha_i \in \Phi$ with $m_i \geq 0$ and $m_8 > 0$} \>.
\]
The Chevalley commutator formula implies that $Z(Q) = X_{\alpha_0}$, where $\alpha_0$ is the highest root in $\Phi$. Let $t$ be an nonzero element of $\F_q$, and under the identification of $V$ with $Q/Z(Q)$, let $v \in V$ be the nonzero vector corresponding to the coset of $X_{\alpha_0}$ with representative $x_{\alpha_8}(t)$. We claim that every element of $K_0$ fixes $v$. Said otherwise, we claim that every element of $\widetilde{K}_0$ commutes with $x_{\alpha_8}$ modulo $X_{\alpha_0}$. Let $x_\alpha(s) \in \widetilde{K}_0$ where $s \in \F_q$ and $\alpha \in \Phi$. We may write $\alpha = \sum_{1 \leq i \leq 6} m_i \alpha_i$ since, by construction,
\[
\widetilde{K}_0 = \< X_{\alpha} \mid \text{$\alpha = \sum_{1 \leq i \leq 8} m_i \alpha_i \in \Phi$ with $m_7 = m_8 = 0$} \>.
\]
Hence, Lemma~\ref{lem:e_roots} implies that $j\alpha + k\alpha_8 \not\in \Phi$ for all $j,k>0$, so
\[
[x_\alpha(s),x_{\alpha_7}(t)] = \prod_{j,k > 0} x_{j\alpha+k\alpha_8}(C^{j,k}_{\alpha,\alpha_8}s^jt^k) = 1,
\]
which proves the claim.

Therefore, it suffices to identify an element $g \in K_0$ that is a derangement on $T/M$. Let $g \in K_0 \cong (3,q-1).E_6(q)$ have order $q^6+q^3+1$. The $p$-part of $|T|$ is $q^{63}$ and $q^{56}$ divides $|T:M|$, so the $p$-part of $|M|$ is at most $q^7$. In particular, $M$ is not a parabolic subgroup, a subgroup of type $E_6(q) \times (q-1)$ or a subfield subgroup defined over $\F_{q^{1/2}}$. Note that $|g|$ is at least $73$ and divisible by a primitive prime divisor of $q^9-1$. 

The maximal subgroups of almost simple groups with socle $E_7(q)$ have not yet been determined (except when $q=2$), but much is known and the existing literature is ample for our purposes. In particular, writing $T = O^{p'}(X_\sigma)$ where $X$ is a simple linear algebraic group of type $E_6$ of adjoint type and $\sigma$ is a Frobenius endomorphism, $M$ satisfies one of the following 
\begin{enumerate}[{\rm (I)}]
\item $Y_\sigma \cap T$ for a maximal closed $\sigma$-stable positive-dimensional subgroup $Y$ of $X$
\item $X_\alpha \cap T$ for a Steinberg endomorphism $\alpha$ of $X$ such that $\alpha^k=\sigma$ for a prime $k$
\item a local subgroup not in (I)
\item an almost simple group not in (I) or (II).
\end{enumerate}

First assume that $M$ satisfies (I). Then $M$ is given in \cite[Tables~5.1 \&~5.2]{ref:LiebeckSaxlSeitz92} if $Y$ has maximal rank and \cite[Table~III]{ref:LiebeckSeitz90} otherwise. No such groups contain an element of order divisible by a primitive prime divisor of $q^9-1$ (recalling that $M$ is not a parabolic subgroup or a subgroup of type $E_6(q) \times (q-1)$). Next assume that $M$ satisfies (II), so $M = E_7(q^{1/k})$ for a prime $k > 2$ (recalling that $M$ is not a subfield subgroup defined over $\F_{q^{1/2}}$), but no such subgroup has order divisible by a primitive prime divisor of $q^9-1$. Now assume that $M$ satisfies (III). Then $M$ is given in \cite[Table~1]{ref:CohenLiebeckSaxlSeitz92} and again $M$ contains no elements of suitable order. For the rest of the proof, we may assume that $M$ satisfies (IV). If $\soc(M)$ is not a group of Lie type in characteristic $p$, then, as noted in \cite[Proposition~3.7]{ref:BurnessGuralnickHarper21}, no element of $M$ has order exceeding $63$, but $|g| \geq 73$, so $g$ is not contained in $M$. Hence, we may assume that $\soc(M)$ is a group of Lie type in characteristic $p$. By \cite[Theorem~1]{ref:LiebeckSeitz98TAMS}, $\soc(M)$ is one of the following
\begin{enumerate}
\item $\PSL_2(t)$, ${}^2B_2(t)$ or ${}^2G_2(t)$ with $t \leq (2,p-1) \cdot 388$
\item $\PSL^\pm_3(t)$ with $t \leq 16$
\item $\PSL^\pm_4(t)$, $\PSp_4(t)$, $\PSp_6(t)$, $\Omega_7(t)$ or $G_2(t)$ with $t \leq 9$
\end{enumerate}
Consulting the order formulae for these groups, if $|M|$ is divisible by a primitive prime divisor of $q^9-1$, then $q = p = 2$ and $\soc(M)$ is one of the following
\[
\PSL_3(2^3), \ \PSL_4(2^3), \ \PSp_6(2^3), \ G_2(2^3),
\]
but, in all cases, the $2$-part of $|M|$ exceeds $2^7$, which is a contradiction. Therefore, $g$ is not contained in $M$, which completes the proof.
\end{proof}

\subsection{Reduction lemma for Theorem~\ref{thm:isbell}} \label{ss:isbell_reduction}

For this section, fix a prime number $p$. We will establish some useful reductions we can make in our proof of Theorem~\ref{thm:isbell}. The following carefully stated hypothesis is a key aspect of this.

\begin{hypothesis} \label{hyp:isbell_reduction} 
Let
\begin{enumerate}
\item $T$ be a nonabelian finite simple group
\item $M$ be a subgroup of $T$ with order divisible by $p$
\item $k$ be a positive integer
\item $Q$ be $T^k$ or a monolithic group with monolith $T^k$ such that $\bigcup_{g \in Q}(M^k)^g = \bigcup_{g \in T^k}(M^k)^g$
\item $G$ be a finite group
\item $N$ be a normal subgroup of $G$ such that $G/N = Q$
\item $\gamma$ be the quotient map $\gamma\: G \to Q$
\item $d$ be a positive integer
\item $\rho$ be a faithful irreducible representation $G \to \GL_d(p)$
\item $H$ be a subgroup of $G$ such that $N \leq H$ and $H\gamma \cap T^k = M^k$
\end{enumerate}
Assume that $p^d$ divides $|T:M|^k$. Then there exists $g \in G$ that is a derangement on $G/H$ and such that $g\rho$ fixes a nonzero vector of $\F_p^d$.
\end{hypothesis} 

In the context of Hypothesis~\ref{hyp:isbell_reduction}, we write $T^k = T_1 \times \cdots \times T_k$ and $M^k = M_1 \times \cdots \times M_k$. We will also write $V$ for the $\F_pG$-module afforded by $\rho$.

The next lemma shows that if there is a counterexample to Hypothesis~\ref{hyp:isbell_reduction}, then there is a counterexample with several convenient properties. We write $\Frat(G)$ for the Frattini subgroup of a group $G$.

\begin{lemma} \label{lem:isbell_reduction}
Assume that Hypothesis~\ref{hyp:isbell_reduction} has a counterexample with the nonabelian simple group $T$. Then Hypothesis~\ref{hyp:isbell_reduction} has a counterexample $(T,M,k,Q,G,N,\gamma,d,\rho,H)$ where the following hold
\begin{enumerate}
\item $Q = T^k$
\item $N \leq \Frat(G)$
\item $\rho$ is primitive.
\end{enumerate}
\end{lemma}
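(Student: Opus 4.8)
The plan is to argue by minimal counterexample: each reduction below strictly decreases the order of the ambient group, so it suffices to fix a counterexample $(T,M,k,Q,G,N,\gamma,d,\rho,H)$ to Hypothesis~\ref{hyp:isbell_reduction} with $|G|$ minimal and show that the failure of any one of (i)--(iii) produces a smaller counterexample. The structural observation underpinning everything is that, since $N \trianglelefteq G$ and $N \leq H$, the $G$-action on $G/H$ is inflated through $\gamma$ from the $Q$-action on $Q/\bar H$, where $\bar H := H\gamma$; hence $g \in G$ is a derangement on $G/H$ if and only if $g\gamma \notin \bigcup_{x \in Q}\bar H^x$, a condition depending only on $g\gamma$. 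Intersecting with $T^k \trianglelefteq Q$ and using $\bar H \cap T^k = M^k$ together with condition~(iv) of Hypothesis~\ref{hyp:isbell_reduction}, this shows that for $g \in \gamma^{-1}(T^k)$ being a derangement on $G/H$ is the same as $g\gamma$ being a derangement for the $T^k$-action on $T^k/M^k$. Since the property that $g\rho$ fixes a nonzero vector of $\F_p^d$ is unaffected by the restrictions of $\rho$ used below, each reduction comes down to transferring the remaining data of Hypothesis~\ref{hyp:isbell_reduction} and invoking minimality.

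I would carry out the reductions in the order (iii), then (i), then (ii). For~(iii): if $\rho$ is imprimitive, then---after replacing the block system by a coarser one respected by $N$ if necessary---we have $V = \mathrm{Ind}_{G_1}^{G}W$ for the stabiliser $G_1 < G$ of a block and an irreducible $\F_p G_1$-module $W$ of dimension $d/m < d$, where $m \geq 2$. Putting $N_1 = N \cap G_1$, $Q_1 = G_1N/N \leq Q$ (still of the shape required by~(iv)), $\gamma_1 = \gamma|_{G_1}$ and $H_1 = H \cap G_1$ gives, after checking $H_1\gamma_1 \cap T^k = M^k$ and the faithfulness of $W$, a counterexample with $|G_1| < |G|$. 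For~(i): if $Q \neq T^k$, then $G_0 := \gamma^{-1}(T^k)$ is a proper normal subgroup of $G$ containing $N$, with $G_0/N = T^k$; primitivity of $\rho$ and Clifford's theorem force $V|_{G_0}$ to be homogeneous, say $V|_{G_0} \cong W^{\oplus e}$ with $W$ an irreducible $\F_p G_0$-module of dimension $d/e$, and then $W$ is automatically faithful since $\ker_{G_0}(W) = \ker_{G_0}(V) = 1$. With $H_0 = H \cap G_0$ (so $N \leq H_0$, $H_0\gamma = M^k$ and $G_0/H_0 \cong T^k/M^k$ as $G_0$-sets), the tuple $(T,M,k,T^k,G_0,N,\gamma|_{G_0},d/e,W,H_0)$ is a counterexample with $|G_0| < |G|$. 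For~(ii): if $N \not\leq \Frat(G)$, choose a maximal subgroup $L < G$ with $G = NL$ and set $N_L = N \cap L$, $\gamma_L = \gamma|_L$, $H_L = H \cap L$; from $G = NL$ and $N \leq H$ one obtains $L/N_L \cong Q$, $N_L \leq H_L$ and $H_L\gamma_L = \bar H$, so by the structural observation $(T,M,k,Q,L,N_L,\gamma_L,d,\rho|_L,H_L)$---or the same tuple with $\rho|_L$ replaced by an irreducible constituent of $V|_L$, should $\rho|_L$ be reducible---is a counterexample with $|L| < |G|$.

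The main obstacle is not the derangement bookkeeping, which the structural observation handles cleanly, but keeping $\rho$ faithful and irreducible while passing to the smaller group. This is precisely why the reduction to $Q = T^k$ is performed after~(iii): once $\rho$ is primitive, Clifford's theorem makes the needed irreducible constituent of $V|_{G_0}$ automatically faithful. In~(iii) the faithfulness of $W$ must be argued directly---its kernel is a normal subgroup of $G_1$ with trivial $G$-core, and ruling out a nontrivial kernel is exactly where the choice of an $N$-respecting block system is used---and similarly in~(ii) one must either show $\rho|_L$ is already irreducible or, using the maximality of $L$, that an irreducible constituent of $V|_L$ remains faithful. One must also verify at each stage that the quotient of the smaller group retains the prescribed shape (namely $T^k$, or monolithic with monolith $T^k$) so that condition~(iv) of Hypothesis~\ref{hyp:isbell_reduction} persists, and that the relevant subgroup intersections continue to meet $T^k$ in exactly $M^k$.
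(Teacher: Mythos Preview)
Your minimal-$|G|$ scheme has a genuine structural gap in step~(iii). When $\rho$ is imprimitive and you pass to the block stabiliser $G_1 < G$, the image $Q_1 = G_1N/N$ is simply an arbitrary subgroup of $Q$; there is no reason it should again be of the form ``$T^{k_1}$ or monolithic with monolith $T^{k_1}$'', and your parenthetical ``still of the shape required by~(iv)'' is unjustified. (Even with $Q = T^k$, a block stabiliser could have image such as $M \times T^{k-1}$ for some $M < T$.) The paper avoids this by working not with the block stabiliser but with its core $K \trianglelefteq G$, whose image in $Q = T^k$ is automatically some $T^\ell$; the price is that one must control $\ell$, and this is where the nontrivial numerical input $v_p(|T|) \leq P(T)$ from Proposition~\ref{prop:p-part}(i) enters, yielding $\ell \geq k/2$. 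Your sketch invokes no such bound, and without it there is no way to ensure that the subgroup you pass to still sees enough of $T^k$ for the divisibility condition $p^{d_1} \mid |T{:}M|^{k_1}$ to survive.

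The faithfulness issues you flag in~(ii) and~(iii) are also not resolved by ``maximality of $L$'' or by choosing an ``$N$-respecting block system''. In both places the paper's actual argument is to suppose the kernel of the constituent is not contained in $N$, so its image contains some $T_1$, and then to invoke Saxl's theorem that $T_1$ contains an element lying in no $\Aut(T_1)$-conjugate of $M_1$; such an element gives a derangement fixing every vector of the constituent, contradicting that we started from a counterexample. Finally, note that your ordering forces you to establish primitivity \emph{before} you know $Q = T^k$ or that $N$ is soluble, whereas the paper first secures $N \leq \Frat(G)$ (hence $N$ nilpotent) and $Q = T^k$ via two auxiliary lemmas, minimises $d$ rather than $|G|$, and only then handles primitivity by the core-plus-bound argument above.
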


We begin with some preliminary reductions. For the first, recall that for a finite group $G$ and a normal subgroup $N \leqn G$, we have $N \leq \Frat(G)$ if and only if $KN < G$ for all $K < G$.

\begin{lemma} \label{lem:isbell_minimal}
Let $C = (T,M,k,Q,G,N,\gamma,d,\rho,H)$ be a counterexample to Hypothesis~\ref{hyp:isbell_reduction}. Then there exists a counterexample $C_0 = (T,M,k,Q,G_0,N_0,\gamma_0,d_0,\rho_0,H_0)$ where 
\begin{enumerate}
\item $d_0 \leq d$
\item $N_0 \leq \Frat(G_0)$
\end{enumerate} 
\end{lemma}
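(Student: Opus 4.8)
The plan is to take a counterexample $C = (T,M,k,Q,G,N,\gamma,d,\rho,H)$ and first minimise $d$, then argue that with $d$ minimal the hypotheses force $N \le \Frat(G)$. Among all counterexamples with the same simple group $T$ (and hence, by the structure of Hypothesis~\ref{hyp:isbell_reduction}(iv), with $N$ fitting into an extension whose quotient is either $T^k$ or a monolithic group with monolith $T^k$), choose one, call it $C_0 = (T,M,k,Q,G_0,N_0,\gamma_0,d_0,\rho_0,H_0)$, with $d_0$ minimal. Then automatically $d_0 \le d$, giving (i). It remains to show $N_0 \le \Frat(G_0)$.

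For (ii), I would argue by contradiction: suppose $N_0 \not\le \Frat(G_0)$. Then there is a maximal subgroup $K < G_0$ with $N_0 \not\le K$, hence $KN_0 = G_0$ and $N_0 K$ is all of $G_0$, so $K$ is a complement to a $G_0$-chief factor of $N_0$ lying above $K \cap N_0$. The key point will be to restrict the whole configuration to $K$. Since $KN_0 = G_0$ and $K$ acts on $G_0/N_0 = Q$ surjectively, we have $K\gamma_0 = Q$; set $N_0^K = K \cap N_0 \trianglelefteq K$ and $H_0^K = K \cap H_0$. Because $N_0 \le H_0$ one checks $H_0 = (K\cap H_0)N_0$ and $(H_0^K)\gamma_0 \cap T^k = M^k$ still holds (the map $\gamma_0|_K\colon K \to Q$ is the same quotient data). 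So $(T,M,k,Q,K,N_0^K,\gamma_0|_K, ?, \rho_0|_K, H_0^K)$ is again a candidate for Hypothesis~\ref{hyp:isbell_reduction}, with the divisibility condition $p^d \mid |T:M|^k$ unchanged. The representation $\rho_0|_K$ need not be irreducible, but I can replace it by any irreducible constituent $\rho_1$ of dimension $d_1 \le d_0$ that is still faithful — and here is where I must be careful: faithfulness of $\rho_1$ on $K$ is not automatic. To fix this, I would first note that $V = \F_p^{d_0}$ restricted to $K$ has the property that $K$ acts faithfully on it (since $K \le G_0$ and $\rho_0$ is faithful on $G_0$), and then take $\rho_1$ to be a faithful sum of as few irreducible constituents as possible — or, better, use the standard trick that a chief factor complement $K$ has a faithful irreducible constituent on $V$ when $V$ is a faithful module; if not, one passes to $V/W$ for a suitable submodule. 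I expect the cleanest route is: since $N_0 \not\le \Frat(G_0)$, there is a chief factor of $G_0$ inside $N_0$ that is complemented, and minimality of $d_0$ will then be contradicted because the restricted/quotiented configuration on $K$ yields a counterexample of strictly smaller linear dimension $d_1 < d_0$ — provided I can show a derangement on $K/H_0^K$ with the fixed-vector property would lift to one on $G_0/H_0$.

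The main obstacle, and the step needing the most care, is exactly this lifting: given $g \in K$ that is a derangement on $K/H_0^K$ with $g\rho_1$ fixing a nonzero vector, I must produce a derangement on $G_0/H_0$ with $g\rho_0$ fixing a nonzero vector of $\F_p^{d_0}$. Derangement-ness should transfer because the coset spaces $K/(K\cap H_0)$ and $G_0/H_0$ are $K$-equivariantly identified via $KN_0 = G_0$ and $N_0 \le H_0$; so $g$ fixing no point of $K/H_0^K$ means $g$ fixes no point of $G_0/H_0$. For the fixed-vector condition I need $g\rho_0$ (on the full $d_0$-dimensional module $V$) to have a nonzero fixed vector given that $g\rho_1$ does on a constituent or quotient — this is immediate if $\rho_1$ is a submodule of $V|_K$ (a fixed vector of $g$ in the submodule is a fixed vector in $V$), so I should arrange $\rho_1$ to be an irreducible \emph{sub}module of $V|_K$, not a quotient. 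Ensuring simultaneously that such a submodule is faithful is the delicate point; if no irreducible submodule is faithful, the socle of $V|_K$ is a faithful semisimple module of dimension $\le d_0$, and I would instead argue that either its dimension is $< d_0$ (done, contradicting minimality after passing to it) or $V|_K$ is already semisimple and faithful-irreducible-ish, in which case $K$ itself together with a carefully chosen constituent does the job. Once this bookkeeping is complete, minimality of $d_0$ is contradicted, so $N_0 \le \Frat(G_0)$, completing the proof.
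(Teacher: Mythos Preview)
Your approach has two real gaps. First, the minimality contradiction is not set up correctly: if $V|_K$ happens to be irreducible (which nothing prevents), then $d_1 = d_0$ and you obtain no contradiction to the minimality of $d_0$; you would at least need to minimise the pair $(d_0,|G_0|)$ lexicographically, which you have not done. Second, and more seriously, when $V|_K$ is reducible there is no reason any irreducible \emph{sub}module should be faithful on $K$, and none of your proposed workarounds (passing to the socle, or to a quotient) produce a single faithful irreducible representation as Hypothesis~\ref{hyp:isbell_reduction}(ix) demands. You correctly identify this as the delicate point, but you do not resolve it.

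The idea you are missing is that the counterexample hypothesis itself controls the kernel. The paper proceeds as follows: choose $G_1 \leq G$ minimal subject to $G_1 N = G$, take \emph{any} irreducible $\F_p G_1$-submodule $V_0 \leq V$, and let $\rho_1\colon G_1 \to \GL(V_0)$ be the restriction. If $\ker\rho_1 \not\leq N$, then $(\ker\rho_1)\gamma$ is a nontrivial normal subgroup of $Q$, hence contains some factor $T_i$; by \cite[Proposition~2]{ref:Saxl88} there is $g_1 \in T_i$ lying in no $\Aut(T)$-conjugate of $M_i$, and any preimage $g \in \ker\rho_1$ is then a derangement on $G/H$ that fixes every vector of $V_0 \leq V$ --- contradicting that $C$ was a counterexample. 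Thus $\ker\rho_1 \leq N$, and one simply sets $G_0 = G_1\rho_1$, $N_0 = (G_1 \cap N)\rho_1$, with the induced $\gamma_0$, $H_0$, $\rho_0$. The Frattini condition is now automatic from the minimality of $G_1$: any $K_0 < G_0$ pulls back to $K_1 < G_1$, and $K_1 N_1 = G_1$ would give $K_1 N = G$, violating minimality. So you never need to find a faithful irreducible constituent directly --- you quotient by the kernel \emph{after} using the counterexample assumption to pin the kernel inside $N$.
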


\begin{proof}
Let $G_1 \leq G$ be minimal such that $G_1N = G$. Let $V_0$ be an irreducible $\F_pG_1$-submodule of $V|_{G_1}$ of dimension $d_0$, and let $\rho_1\: G_1 \to \GL(V_0)$ be $g \mapsto (g\rho)|_{V_0}$. 

We claim that $\ker\rho_1 \leq N$. Suppose otherwise. Since $(\ker\rho_1)\gamma$ is a nontrivial normal subgroup of $G_1\gamma = Q$, without loss of generality, $T_1 \leq (\ker\rho_1)\gamma$. Let $g_1 \in T_1$ such that $g_1$ is contained in no $\Aut(T_1)$-conjugate of $M_1$ (which is possible by \cite[Proposition~2]{ref:Saxl88}), and let $g \in \ker\rho_1$ such that $g\gamma = g_1$. Then $g\gamma$ is a derangement in the action of $G\gamma$ on $G\gamma/H\gamma$, and hence $g$ is a derangement in the action of $G$ on $G/H$. However, $g$ fixes every vector in $V_0$, which contradicts $C$ being a counterexample. Therefore, $\ker\rho_1 \leq N$, as claimed.

Write $G_0 = G_1\rho_1$, so the inclusion map $\rho_0\:G_0 \to \GL(V_0)$ is a faithful irreducible representation. Write $N_1 = G_1 \cap N$ and $N_0 = N_1\rho_1$. Since $\ker\rho_1 \leq N_1$ we have 
\[
G_0/N_0 \cong G_1/N_1 \cong G_1N/N = G/N = Q.
\] 
Write $\gamma_0\: G_0 \to Q$ for the quotient map defined by $N_0 \leqn G_0$, and note that $\gamma|_{G_1} = \rho_1\gamma_0$. Fix $N_0 \leq H_0 \leq G_0$ such that $H_0/N_0 = H/N$. Then $p^{d_0}$ divides $p^d$ which divides $|T:M|^k$.

Suppose there exists $g_0 \in G_0$ that is a derangement on $G_0/H_0$ and such that $g_0\rho_0$ fixes a nonzero vector of $V_0$. Fix $g \in G_1 \leq G$ such that $g\rho_1 = g_0$. Then $g\rho$, which agrees with $g_0\rho_0$ on $V_0$, fixes a nonzero vector of $V_0 \leq V$ and $g$ is a derangement in the action of $G$ on $G/H$ since $g\gamma = g_0\gamma_0$, which contradicts $C$ being a counterexample. Therefore, $(T,M,k,Q,G_0,N_0,\gamma_0,d_0,\rho_0,H_0)$ is a counterexample. 

We claim that $N_0 \leq \Frat(G_0)$. Let $K_0 < G_0$. Let $\ker\rho_1 \leq K_1 < G_1$ such that $K_1\rho_1 = K_0$. If $K_1N_1 = G_1$, then $K_1N = G_1N = G$, but this contradicts the choice of $G_1$, so $K_1N_1 < G_1$. Therefore, $K_0N_0 = (K_1\rho_1)(N_1\rho_1) < G_1\rho_1 = G_0$, since $\ker\rho_1 \leq K_1$ and $\ker\rho_1 \leq N_1$. This proves that $N_0 \leq \Frat(G_0)$, as claimed. Therefore, $C_0$ satisfies the lemma.
\end{proof}

For the next reduction, we need the following technical lemma, which is \cite[Corollary~2.6]{ref:HarperLiebeck25}. Recall that a \emph{subdirect product} of $G_1 \times \cdots \times G_r$ is a subgroup $G \leq G_1 \times \cdots \times G_r$ such that for all $1 \leq i \leq r$ the projection $G \to G_i$ defined as $(g_1, \dots, g_r) \mapsto g_i$ is surjective.

\begin{lemma} \label{lem:subdirect}
Let $G$ be a subdirect product of $G_1 \times \cdots \times G_r$ and let $\pi_i\: G \to G_i$ be the projection onto the $i$th factor. Assume that $N$ is a soluble normal subgroup of $G$ such that $G/N \cong T^k$ where $T$ is a nonabelian simple group and $k$ is a positive integer. Then there exist nonnegative integers $k_1, \dots, k_r$ satisfying $k_1 + \cdots + k_r \geq k$ such that $G_i/N\pi_i \cong T^{k_i}$ for all $1 \leq i \leq r$.
\end{lemma}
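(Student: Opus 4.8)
The plan is to first pin down each quotient $G_i/N\pi_i$ explicitly, and then derive $k_1+\cdots+k_r \geq k$ from an overlap argument among the kernels $\ker\pi_i$. Write $K_i = \ker\pi_i \leqn G$; since $G$ is a subdirect product of $G_1 \times \cdots \times G_r$ we have $\bigcap_{i=1}^r K_i = 1$, and since $\pi_i$ is surjective the image $N\pi_i$ equals $NK_i/K_i$, so $G_i/N\pi_i \cong G/NK_i \cong (G/N)/(NK_i/N)$. Here $NK_i/N$ is a normal subgroup of $G/N \cong T^k$, and as $T$ is nonabelian simple, every normal subgroup of $T^k$ is a subproduct of the $k$ simple direct factors (one meeting each factor trivially centralises, hence lies in, $Z(T^k)=1$). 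Thus $NK_i/N = \prod_{j\in R_i}T_j$ for some $R_i \subseteq \{1,\dots,k\}$, whence $G_i/N\pi_i \cong T^{k_i}$ with $k_i := k - |R_i| \geq 0$. This settles the structural part, and it remains to prove the inequality.

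For that, note $k_i = |\{1,\dots,k\}\setminus R_i|$, so $\sum_i k_i \geq \big|\bigcup_i(\{1,\dots,k\}\setminus R_i)\big|$, and it is enough to show $\bigcap_{i=1}^r R_i = \emptyset$. The device I would use is the normal subgroup $D := \bigcap_{i=1}^r NK_i$ of $G$. Since each $NK_i$ contains $N$, intersection commutes with passage to $G/N$, giving $D/N = \bigcap_i(NK_i/N) = \prod_{j\in J}T_j$ where $J := \bigcap_i R_i$, so $D/N \cong T^{|J|}$; the goal is to force $J = \emptyset$. Suppose $J \neq \emptyset$. For each $i$, Dedekind's modular law (applicable since $N \leq D \leq NK_i$) gives $D = N(K_i \cap D)$; put $E_i := K_i \cap D \leqn G$. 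Then $\bigcap_i E_i = D \cap \bigcap_i K_i = 1$, so $D$ embeds diagonally in $\prod_i D/E_i$. But $D/E_i = NE_i/E_i \cong N/(N\cap E_i)$ is a quotient of the soluble group $N$, hence soluble, so $D$ is soluble — contradicting $D/N \cong T^{|J|}$ being insoluble. Hence $J = \emptyset$ and the bound follows.

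I expect the first paragraph to be routine bookkeeping; the main obstacle is the second, where the decisive idea is to introduce $D = \bigcap_i NK_i$ and recognise that it is squeezed between $N$ and a configuration whose quotients $D/E_i$ are all soluble, which forces $D$ itself to be soluble and thereby kills the ``overlap'' $D/N$. The two points to handle with care are the interchange of intersection with the quotient by $N$ (legitimate precisely because each $NK_i$ contains $N$) and the description of normal subgroups of $T^k$ as subproducts of its simple factors.
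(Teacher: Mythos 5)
Your proof is correct. Note that the paper itself gives no proof of this statement: it is quoted verbatim as \cite[Corollary~2.6]{ref:HarperLiebeck}, so there is no in-paper argument to compare against. Your two steps both check out. The identification $G_i/N\pi_i \cong G/NK_i \cong (G/N)/(NK_i/N)$ with $K_i=\ker\pi_i$ is standard bookkeeping, and the description of normal subgroups of $T^k$ as subproducts of the simple factors (via $[M,T_j]\leq M\cap T_j$ and $Z(T^k)=1$) correctly yields $G_i/N\pi_i\cong T^{k-|R_i|}$. For the inequality, the reduction to $\bigcap_i R_i=\emptyset$ via the union bound is right, and the key device --- setting $D=\bigcap_i NK_i$, using the modular law to write $D=N(K_i\cap D)$, and embedding $D$ diagonally into $\prod_i D/E_i$ with each $D/E_i\cong N/(N\cap E_i)$ soluble, so that $D$ is soluble while $D/N\cong T^{|J|}$ is not unless $J=\emptyset$ --- is a clean and complete argument. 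The two delicate points you flag (interchanging intersection with the quotient by $N$, legitimate because every $NK_i$ contains $N$, and the structure of normal subgroups of $T^k$) are exactly the right ones to be careful about, and both are handled correctly. This gives a self-contained proof of a result the paper only cites.
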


\begin{lemma} \label{lem:isbell_simple}
Let $C = (T,M,k,Q,G,N,\gamma,d,\rho,H)$ be a counterexample to Hypothesis~\ref{hyp:isbell_reduction} where $N$ is soluble. Then there exists a counterexample $C_0 = (T,M,k_0,Q_0,G_0,N_0,\gamma_0,d_0,\rho_0,H_0)$ where 
\begin{enumerate}
\item $d_0 \leq d$
\item $Q_0 = T^{k_0}$
\end{enumerate} 
\end{lemma}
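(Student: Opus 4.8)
\emph{Proof proposal.} The plan is to combine Clifford theory with Lemma~\ref{lem:subdirect}. If $Q = T^k$ there is nothing to prove, so assume $Q$ is monolithic with monolith $T^k$ and put $R = \gamma^{-1}(T^k)$, a normal subgroup of $G$ with $N \leq R$ and $R/N \cong T^k$. The restriction $\rho|_R$ is faithful, since its kernel lies in $\ker\rho = 1$, but it is typically reducible; write $V|_R = W_1 \oplus \cdots \oplus W_m$ as a direct sum of irreducible $\F_pR$-modules. By Clifford's theorem the $W_i$ form a single $G$-orbit, so $\dim W_i = d/m$ for all $i$. Since $R$ stabilises each $W_i$ and acts faithfully on their sum, $R$ is (isomorphic to) a subdirect product of $R_1 \times \cdots \times R_m$, where $R_i \leq \GL(W_i)$ is the image of $R$ and $\pi_i\colon R \to R_i$ is the projection.

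Now I would apply Lemma~\ref{lem:subdirect} to this subdirect product: as $N$ is soluble and $R/N \cong T^k$, there are integers $k_1, \dots, k_m \geq 0$ with $k_1 + \cdots + k_m \geq k$ such that $R_i/N\pi_i \cong T^{k_i}$ for each $i$. Fix $j$ with $k_j \geq k/m$, so $k_j \geq 1$, and take the tuple $C_0 = (T, M, k_j, T^{k_j}, R_j, N\pi_j, \gamma_0, d/m, \rho_0, H_0)$, where $\rho_0\colon R_j \hookrightarrow \GL(W_j)$ is the (faithful, irreducible) inclusion, $H_0 = (H \cap R)\pi_j$, and $\gamma_0$ is defined as follows. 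The quotient $R_j/N\pi_j$ is $(R/N)$ modulo a normal subgroup, hence is canonically isomorphic to $\prod_{i \in S} T_i$ for some $S \subseteq \{1,\dots,k\}$ with $|S| = k_j$; this is the identification I use to define $T^{k_j}$ and $\gamma_0\colon R_j \to T^{k_j}$. With this choice, $N\pi_j$ is soluble, $N\pi_j \leq H_0$, and $H_0\gamma_0$ is the image of $(H \cap R)/N = H\gamma \cap T^k = M_1 \times \cdots \times M_k$ under the projection to $\prod_{i \in S} T_i$, namely $\prod_{i \in S} M_i = M^{k_j}$, so condition~(x) of Hypothesis~\ref{hyp:isbell_reduction} holds; and $p^{d/m}$ divides $|T:M|^{k_j}$ because $v_p(|T:M|) \geq d/k$ and $k_j \geq k/m$. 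Hence $C_0$ is a genuine instance of the hypothesis.

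To see that $C_0$ is still a counterexample, suppose $g_0 \in R_j$ were a derangement on $R_j/H_0$ with $g_0\rho_0$ fixing a nonzero $w \in W_j$, and lift $g_0$ to $g \in R$. Then $g$ acts on $W_j \leq V$ through $\pi_j$ as $g_0$, so $g\rho$ fixes $w \neq 0$. Since $H_0 = \gamma_0^{-1}(M^{k_j})$, the derangement condition says that some coordinate of $g\gamma \in T^k$ indexed by $S$ lies in no $T_i$-conjugate of the corresponding $M_i$; hence $g\gamma \notin \bigcup_{z \in T^k}(M^k)^z = \bigcup_{z \in Q}(M^k)^z$ by Hypothesis~\ref{hyp:isbell_reduction}(iv), and as $(M^k)^z = (H\gamma)^z \cap T^k$ this forces $g\gamma \notin \bigcup_{z \in Q}(H\gamma)^z$, so $g$ is a derangement on $G/H$. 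This contradicts the assumption on $C$, so no such $g_0$ exists; since $d/m \leq d$ and $T^{k_j}$ has the required form, $C_0$ is the desired counterexample.

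The main obstacle is bookkeeping rather than a hard estimate: one must keep the identification of $R_j/N\pi_j$ with $T^{k_j}$ the one inherited from $R/N \cong T^k$, so that $M^k$ maps to an honest $M^{k_j}$ rather than to some $\Aut(T)\wr\Sym$-twist of it, and one must carefully transport both the derangement condition and the fixed-vector condition between $R_j$ and $G$. The genuinely non-formal input is Lemma~\ref{lem:subdirect}, which supplies $k_1 + \cdots + k_m \geq k$; this is exactly what makes the pigeonhole choice of $j$ — and hence the bound $d_0 = d/m \leq d$ — available.
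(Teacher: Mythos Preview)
Your argument is correct and follows essentially the same route as the paper's proof: restrict to the preimage $R=\gamma^{-1}(T^k)$ (the paper's $G_1$), apply Clifford's theorem to $V|_R$, use Lemma~\ref{lem:subdirect} (with the solubility of $N$) to select a summand where the quotient is $T^{k_j}$ with $k_j\geq k/m$, and then lift a putative witness back to $G$ to obtain a contradiction. Your treatment of the identification $R_j/N\pi_j\cong\prod_{i\in S}T_i$ and the use of condition~(iv) of Hypothesis~\ref{hyp:isbell_reduction} to pass from $T^k$-conjugates to $Q$-conjugates is in fact slightly more explicit than the paper's version.
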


\begin{proof}
Fix $N \leq G_1 \leq G$ such that $G_1\gamma = T^k$. Since $\rho$ is irreducible and $G_1 \leqn G$, by Clifford's theorem, $V|_{G_1} = V_1 \oplus \cdots \oplus V_r$ where $V_i$ is irreducible of dimension $d_0 = d/r$. For each $i$, let $\pi_i\: G_1 \to \GL(V_i)$ be $g \mapsto g|_{V_i}$, so $G_1$ is a subdirect product of $G_1\pi_1 \times \cdots \times G_1\pi_r$. Since $N$ is soluble, by Lemma~\ref{lem:subdirect}, we can fix $1 \leq i \leq r$ such that $G_1\pi_i/N\pi_i \cong T^{k_0}$ for $k_0 \geq k/r$. 

Write $G_0 = G_1\pi_i$ and $V_0 = V_i$, so the inclusion map $\rho_0\: G_0 \to \GL(V_0)$ is a faithful irreducible representation. Write $N_0 = N\pi_i$ and let $\gamma_0\: G_0 \to Q_0$ be the quotient map defined by $N_0 \leqn G_0$, where $Q_0 = T^{k_0}$. Fix $N_0 \leq H_0 \leq G_0$ such that $H_0/N_0 = M^k\pi_i \cong M^{k_0}$. Note that $p^{d_0} = p^{d/r}$ divides $|T:M|^{k_0}$ since $p^d$ divides $|T:M|^k$ and $k_0 \geq k/r$.

Suppose that there exists $g_0 \in G_0$ that is a derangement on $G_0/H_0$ and such that $g_0\rho_0$ fixes a nonzero vector of $V_0$. In particular, $g_0\gamma_0$ is contained in no $T^{k_0}$-conjugate of $M^{k_0}$. Fix $g \in G_1 \leq G$ such that $g\pi_i = g_0$. Then $g\rho$, which agrees with $g_0\rho_0$ on $V_0$, fixes a nonzero vector of $V_0 \leq V$, and $g\gamma$, which agrees with $g_0\gamma_0$ in the components corresponding to $T^{k_0}$, is contained in no $T^k$-conjugate of $M^k$. By part~(iv) of Hypothesis~\ref{hyp:isbell_reduction}, $g\gamma$ is contained in no $Q$-conjugate of $M^k$ either. Therefore, $g$ is contained in no $G$-conjugate of $H$, or said otherwise, $g$ is a derangement in the action of $G$ on $G/H$, which contradicts $C$ being a counterexample. Therefore, $(T,M,k_0,Q_0,G_0,N_0,\gamma_0,d_0,\rho_0,H_0)$ is a counterexample. 
\end{proof}

We can now prove Lemma~\ref{lem:isbell_reduction}. Here we make use of the bound in Proposition~\ref{prop:p-part}(i).

\begin{proof}[Proof of Lemma~\ref{lem:isbell_reduction}]
Let 
\[ 
C_1 = (T,M,k_1,Q_1,G_1,N_1,\gamma_1,d,\rho_1,H_1)
\]
be a counterexample to Hypothesis~\ref{hyp:isbell_reduction} with $d$ chosen minimally. By Lemma~\ref{lem:isbell_minimal}, there exists a counterexample 
\[
C_2 = (T,M,k_1,Q_1,G_2,N_2,\gamma_2,d,\rho_2,H_2)
\] 
where $N_2 \leq \Frat(G_2)$. In particular, $N_2$ is soluble (in fact, nilpotent), so, by Lemma~\ref{lem:isbell_simple}, there exists a counterexample 
\[
C_3 = (T,M,k_3,Q_3,G_3,N_3,\gamma_3,d,\rho_3,H_3)
\] 
where $G_3/N_3 = T^{k_3}$. Then, by Lemma~\ref{lem:isbell_minimal}, there exists a counterexample 
\[
C = (T,M,k,Q,G,N,\gamma,d,\rho,H)
\] 
where $G/N = T^k$ and $N \leq \Frat(G)$. 

It remains to prove that $\rho$ is primitive. Suppose otherwise. Let $V = V_1 \oplus \cdots \oplus V_r$ be a maximal system of linear imprimitivity. Let $\p\:G \to S_r$ be the permutation representation of $G$ on the summands, noting that $\p$ is transitive since $\rho$ is irreducible. Let $G_0$ be the stabiliser of $V_1$, so $|G:G_0| = r$, and let $K = \ker\p$, so, without loss of generality, $K\gamma = T_1 \times \cdots \times T_{\ell}$ for some $1 \leq \ell \leq k$. Hence, $K$ is the core of $G_0$ in $G$, so $K\gamma = T^\ell$ is the core of $G_0\gamma$ in $G\gamma = T^k$. Therefore, $G_0\gamma$ corresponds to a core-free subgroup of $G\gamma/K\gamma = T^{k-\ell}$. This means that
\[
d \geq r = |G:G_0| \geq |G\gamma:G_0\gamma| \geq P(T)^{k-\ell}.
\]
However, $p^d$ divides $|T:M|^k$ and $p$ divides $|M|$, so
\[
d \leq v_p(|T:M|) \cdot < v_p(|T|) \cdot k \leq P(T) \cdot k
\]
where the final inequality is given by Proposition~\ref{prop:p-part}(i). Hence, $k > P(T)^{k-\ell-1} \geq 5^{k-\ell-1}$, so $k \geq 2$ and $\ell > k - \log_5 k - 1$, which implies that $\ell \geq k/2$.

Let $\rho_0\:K \to \GL(V_1)$ be the restriction $g \mapsto g|_{V_1}$. We claim that $\ker\rho_0 \leq N$. Suppose otherwise. Since $(\ker\rho_0)\gamma$ is a nontrivial normal subgroup of $K\gamma = T^\ell$, without loss of generality, $T_1 \leq (\ker\rho_0)\gamma$. Fix $g \in \ker\rho_0$ such that $g\gamma \in T_1$ is contained in no $T_1$-conjugate of $M_1$. Then $g\gamma$ is contained in no $T^k$-conjugate of $M^k$, so $g$ is a derangement in the action of $G$ on $H$. However, $g$ fixes every vector in $V_1$, which contradicts $C$ being a counterexample. Therefore, $\ker\rho_0 \leq N$ as claimed.

Write $K_0 = K\rho_0$ and $N_0 = (N \cap K)\rho_0$. Since $\ker\rho_0 \leq N \cap K$ we have 
\[
K_0/N_0 \cong K/(N \cap K) \cong KN/N \cong K\gamma = T^\ell.
\] 
Write $\gamma_0\: K_0 \to T^\ell$ for the quotient map defined by $N_0 \leqn K_0$. Note that $\gamma|_K = \rho_0\gamma_0$. Fix $N_0 \leq H_0 \leq K_0$ such that $H_0/N_0 = M^\ell$. 

By construction, $G_0\rho_0 \leq \GL(V_1)$ is primitive and $K_0 \leqn G_0\rho_0$, so $V_1|_{K_0} = U_1 \oplus \cdots \oplus U_t$ where $U_i$ is an irreducible $\F_pK_0$-module of dimension $d^* = \dim V_1/t = d/rt$. For each $i$, let $\pi_i\: K_0 \to \GL(U_i)$ be $g \mapsto g|_{U_i}$, so $K_0$ is a subdirect product of $K_0\pi_1 \times \cdots \times K_0\pi_t$. Since $N_0$ is soluble (being a subquotient of the soluble group $N$), by Lemma~\ref{lem:subdirect}, we can fix $1 \leq i \leq t$ such that $K_0\pi_i/N_0\pi_i \cong T^{k^*}$ for $k^* \geq \ell/t$. Since $\ell \geq k/2 \geq k/r$, we have $k^* \geq k/rt$. 

Write $G^* = K_0\pi_i$, $N^* = N_0\pi_i$ and $Q^* = G^*/N^* = T^{k^*}$, and let $\gamma_0$ be the quotient map defined by $N^* \leqn G^*$. Write $V^* = U_i$, so the inclusion map $\rho^*\: G^* \to \GL(V^*)$ is a faithful irreducible representation. Fix $N^* \leq H^* \leq G^*$ such that $H^*/N^* = M^k\pi_i \cong M^{k^*}$. Note that $p^{d^*} = p^{d/rt}$ divides $|T:M|^{k^*}$ since $p^d$ divides $|T:M|^k$ and $k^* \geq k/rt$.

Suppose that there exists $g^* \in G^*$ that is a derangement on $G^*/H^*$ and such that $g^*\rho^*$ fixes a nonzero vector of $V^*$. In particular, $g^*\gamma^*$ is contained in no $T^{k^*}$-conjugate of $M^{k^*}$. Fix $g \in K \leq G$ such that $g\rho_0\pi_i = g^*$. Then $g\rho$, which agrees with $g^*\rho^*$ on $V^*$, fixes a nonzero vector of $V^* \leq V$, and $g\gamma$, with agrees with $g^*\gamma^*$ on $T^{k^*}$, is contained in no $T^k$-conjugate of $M^k$, so $g$ is a derangement in the action of $G$ on $G/H$, which contradicts $C$ being a counterexample. Therefore, $C^* = (T,M,k^*,Q^*,G^*,N^*,\gamma^*,d^*,\rho^*,H^*)$ is a counterexample

Now $d^* = d/rt < d$ since $r > 1$, which contradicts the minimality of $d$.  Therefore, $\rho$ is primitive, as required.
\end{proof}

\subsection{Proof of Theorem~\ref{thm:isbell}} \label{ss:isbell_proof}

To prove Theorem~\ref{thm:isbell}, we use the main results of the previous two sections and the bounds in Proposition~\ref{prop:p-part}(ii) and~(iii). We also need a recent result of Harper and Liebeck \cite[Theorem~4]{ref:HarperLiebeck25}, generalising a result of Feit and Tits \cite{ref:FeitTits78}. Recall the notation $n_G'$ from Section~\ref{ss:prelims_bounds}.

\begin{theorem*}[Harper \& Liebeck, 2024]
Let $\gamma\: G \to T^k$ be a surjective homomorphism where $G$ is a finite group, $T$ is a nonabelian simple group, $k$ is a positive integer and $H\gamma < T^k$ for all $H < G$. Let $F$ be an algebraically closed field, and let $\lambda\: G \to \PGL_n(F)$ be a faithful primitive projective representation. Assume that $n < 2^{n'_T} \cdot k$ if $\mathrm{char}\, F \neq 2$. Then $\gamma$ is an isomorphism.
\end{theorem*}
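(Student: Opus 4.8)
The plan is to prove the equivalent assertion that $N:=\ker\gamma$ is trivial, via a structural analysis of $N$ together with Clifford theory and representation-dimension bounds for simple groups. First observe that the hypothesis ``$H\gamma<T^k$ for all $H<G$'' says exactly that $HN<G$ for every proper $H<G$, i.e.\ $N\leq\Frat(G)$; in particular $N$ is nilpotent, and, since $G/N\cong T^k$ is perfect while $N\leq\Frat(G)$, the group $G$ is itself perfect. As $F$ is algebraically closed I would then replace $\lambda$ by a faithful irreducible linear lift: there is a finite central extension $\hat G$ of $G$, which may be taken perfect, and a faithful irreducible $F\hat G$-module $V$ of dimension $n$ affording $\lambda$; writing $\hat N$ for the preimage of $N$, the group $\hat N$ is nilpotent, $\hat G/\hat N\cong T^k$, and one can arrange $\hat N\leq\Frat(\hat G)$. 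Since $\lambda$, hence $V$, is primitive, Clifford's theorem forces $V|_{\hat N}$ to be homogeneous, say $V|_{\hat N}=W^{\oplus m}$ with $W$ an irreducible $F\hat N$-module which is faithful on $\hat N$; put $d=\dim W$, so $n=dm$. If $\mathrm{char}\,F=p>0$ then the Sylow $p$-subgroup of $\hat N$ acts trivially on $W$, hence on $V$, hence is trivial, so $\hat N$ is a $p'$-group.

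The argument then splits on $d$. If $d=1$ then $\hat N$ acts on $V$ by scalars, so $\lambda(N)=1$ and hence $N\leq\ker\lambda=1$: done. The substantial case is $d\geq2$, in which $\hat N$ is nonabelian. Here I would extract two facts. First, because the inertia group of $W$ in $\hat G$ is all of $\hat G$, the ``going-up'' half of Clifford theory produces an irreducible projective representation $U$ of $\hat G/\hat N\cong T^k$ with $\dim U=m=n/d$; decomposing $U$ as an external tensor product over the $k$ simple factors and letting $k_1$ be the number of nontrivial tensor factors gives $m\geq R^{*}(T)^{k_1}$, where $R^{*}(T)$ is the least dimension of a nontrivial projective $\overline F$-representation of $T$ (so $R^{*}(T)=R_p(T)$ when $\mathrm{char}\,F=p$, and $R^{*}(T)\geq R_p(T)$ for every prime $p$ when $\mathrm{char}\,F=0$). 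Second, an automorphism of a nilpotent group acting trivially on its Frattini quotient lies in a nilpotent subgroup of the automorphism group; since $\hat G$ is perfect, if $T^k$ acted trivially on every chief factor of $\hat N/\Frat(\hat N)$ then $\hat G$ would centralise $\hat N$, putting the nonabelian group $\hat N$ into $Z(\hat G)$, a contradiction. Hence some chief factor of $\hat N$ is a nontrivial irreducible $\F_r T^k$-module for a prime $r\neq\mathrm{char}\,F$, on which $T^k$ acts through a nontrivial subproduct, say $T^{k_0}$ with $k_0\geq1$. Complementing this, since $V$ is faithful, each simple direct factor of $T^k$ that is trivial on $U$ must act faithfully (projectively) on the $d$-dimensional extension $\widetilde W$ of $W$ to $\hat G$, so $\widetilde W$ realises a product of $k-k_1$ copies of $T$ inside $\PGL_d(F)$.

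These three inputs --- the bound $m\geq R^{*}(T)^{k_1}$, the copies of $T$ inside $\PGL_d(F)$, and the nontrivial $\F_r T^{k_0}$-section of $\hat N$ (which constrains $d$ from below via a lower bound on its dimension together with the symplectic-type normaliser of an irreducible $r$-subgroup of $\GL_d(F)$) --- are then to be combined with the classification-based estimates for simple groups: the relations among $P(T)$, $R_p(T)$ and $n'_T$ from Proposition~\ref{prop:p-part} and Lemma~\ref{lem:fields}, the Landazuri--Seitz lower bounds, and the tables of \cite[Chapter~5]{ref:KleidmanLiebeck}. The conclusion is that $n=dm\geq2^{n'_T}\cdot k$ whenever $\mathrm{char}\,F\neq2$, contradicting the hypothesis; the case $\mathrm{char}\,F=2$ is treated separately, with no bound on $n$ needed, exploiting that $\hat N$ is then a $2'$-group to rule out the configuration $d\geq2$. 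This last step is what generalises the Feit--Tits theorem \cite{ref:FeitTits78} (essentially the case $k=1$ with $T$ of Lie type) from a single simple group to the power $T^k$ and pins down the constant $2^{n'_T}$, and it is also the main obstacle: one must orchestrate the representation-dimension estimates uniformly over all nonabelian simple groups $T$, and in particular quantify how a nonabelian nilpotent $N$ is forced to inflate $n$ beyond the bare bound coming from $T^k$ alone.
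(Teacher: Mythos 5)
This statement is not proved in the paper at all: it is quoted verbatim from Harper and Liebeck's preprint \cite{ref:HarperLiebeck} (their Theorem~4) and used as a black box in the proof of Theorem~\ref{thm:isbell}, so there is no in-paper argument to compare yours against. Judged on its own terms, your sketch correctly identifies the Feit--Tits architecture that such a theorem must follow --- reduce to $N=\ker\gamma\leq\Frat(G)$ nilpotent, lift $\lambda$ to a faithful irreducible linear module $V$ of a central extension, use primitivity to make $V|_{\hat N}$ homogeneous with a faithful irreducible constituent $W$ of dimension $d$, dispose of $d=1$ by the scalar argument, and in the case $d\geq 2$ play a symplectic-type subgroup of $\hat N$ against tensor-decomposition bounds for projective representations of $T^k$. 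That is almost certainly the right frame, and your observation that $\hat N$ is a $\operatorname{char}F$-regular group is correct and relevant.

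However, there is a genuine gap: everything that constitutes the actual content of the theorem is deferred rather than proved. The entire case $d\geq 2$ ends with the announcement that three inputs ``are then to be combined with the classification-based estimates'' to yield $n\geq 2^{n'_T}\cdot k$; but producing that specific constant $2^{n'_T}$ (rather than, say, $R(T)$ or $P(T)$), and verifying the required inequalities uniformly over all simple $T$, \emph{is} the theorem, and none of it is carried out. Two specific assertions are also unjustified as stated. First, in characteristic $2$ you claim that $\hat N$ being a $2'$-group ``rules out the configuration $d\geq 2$''; it does not by itself --- an extraspecial group of odd order $r^{1+2a}$ acts faithfully and irreducibly in characteristic $2$ whenever $r^a$ divides $2^e-1$ for some $e$, so excluding $d\geq2$ in characteristic $2$ still requires showing that the resulting embedding $T\preccurlyeq\Sp_{2a}(r)$ ($r$ odd) is incompatible with primitivity and the Frattini hypothesis, which again needs the CFSG comparisons you have not made. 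Second, the bookkeeping between the $k_1$ factors acting nontrivially on $U$, the $k_0$ factors acting on the chief factor of $\hat N$, and the $k-k_1$ factors you place inside $\PGL_d(F)$ is not reconciled: a single simple factor can fall into more than one of these classes, and without controlling the overlap you cannot multiply the lower bounds for $d$ and $m$ to reach $2^{n'_T}\cdot k$. As it stands the proposal is a plausible plan for reproving \cite{ref:HarperLiebeck}, not a proof.
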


\begin{proof}[Proof of Theorem~\ref{thm:isbell}]
Let $N$ be the kernel of the action of $G$ on $\Omega$. First assume that the faithful primitive action of $G/N$ on $\Omega$ has type (AS) or (PA) (see Table~\ref{tab:o'nan-scott}). Since $p$ divides $|\Omega|$, \cite[Theorem~2.1]{ref:BurnessGiudiciWilson11} ensures that there exists an element $g \in G$ of order $p$ that is a derangement on $\Omega$, and since it has order $p$, the element $g\rho$ is a unipotent element of $\GL_d(p)$ and hence fixes a nonzero vector of $\F_p^d$, as required. 

For the remainder of the proof we may assume that the action of $G/N$ on $\Omega$ has type (AS) or (PA). Let $\gamma\: G \to Q$ be the quotient map defined by $N \leqn G$, write $\soc(G/N) = T^k$ where $T$ is simple and fix $N \leq G_0 \leq G$ such that $G_0\gamma = T^k$. There exists an almost simple group $A$ with socle $T$ and a maximal subgroup $\widetilde{M}$ of $A$ such that $H\gamma \cap T^k = M^k$ where $M = \widetilde{M} \cap T$. Since $G$ acts primitively on $G/H$ and $G_0$ is a normal subgroup of $G$ not contained in the kernel of the action, we deduce that $G = G_0H$. 

For a contradiction, suppose that there does not exist $g \in G$ such that $g$ is a derangement in the action of $G$ on $\Omega$ and such that $g\rho$ fixes a nonzero vector of $\F_p^d$. 

We claim that $C = (T,M,k,Q,G,N,\gamma,d,\rho,H)$ is a counterexample to Hypothesis~\ref{hyp:isbell_reduction}. There are a few points to check.  

First note that $p^d$ divides
\[
|G:H| = |G_0H:H| = |G_0:(H \cap G_0)| = |G_0\gamma:(H \cap G_0)\gamma| = |T^k:M^k| = |T:M|^k.
\]

Next we claim that $p$ divides $|M|$. Suppose otherwise. Let $g_1 \in T^k$ have order $p$, and let $g_2 \in G_0$ such that $g_2\gamma = g_1$. By raising $g_2$ to a suitable power coprime to $p$, we obtain a $p$-element $g \in G_0$ such that $g\gamma = g_1$. Since $g$ is a $p$-element $g\rho \in \GL(V)$ is unipotent and hence fixes a nonzero vector of $V$. However, $g\gamma \in T^k$ has order $p$, so $g\gamma$ is contained in no $Q$-conjugate of $M^k$ as $p$ does not divide $|M^k|$. Therefore, $g\gamma$ is contained in no $G\gamma$-conjugate of $H\gamma$, and hence $g$ is contained in no $G$-conjugate of $H$. Said otherwise, $g$ is derangement in the action of $G$ on $G/H$, which is a contradiction. Therefore, $p$ divides $|M|$ as claimed. 

Finally, note that $Q = G\gamma = (G_0H)\gamma = T^k (H\gamma)$, so $\bigcup_{g \in Q}(H\gamma)^g = \bigcup_{g \in T^k}(H\gamma)^g$. By intersecting with $T^k$, we see that $\bigcup_{g \in Q}(M^k)^g = \bigcup_{g \in T^k}(M^k)^g$.

We have now verified that $C$ is a counterexample, as claimed.

Therefore, Lemma~\ref{lem:isbell_reduction} implies that there exists a counterexample 
\[
C_0 = (T,M,k_0,T^{k_0},G_0,N_0,\gamma_0,d_0,\rho_0,H_0)
\] 
where $N_0 \leq \Frat(G_0)$ and $\rho_0\: G_0 \to \GL_{d_0}(p)$ is primitive. From this we obtain a faithful primitive representation $\rho_1\: G_0 \to \GL_n(\FF_p)$, for a divisor $n$ of $d_0$ (see \cite[Lemma~2.10.2]{ref:KleidmanLiebeck}). 

Let us record some consequences of $C_0$ being a counterexample to Hypothesis~\ref{hyp:isbell_reduction}. First, there is no element $g \in G_0$ such that $g$ is a derangement in the action of $G_0$ on $G_0/H_0$ and such that $g\rho_0$ fixes a nonzero vector of $\F_p^{d_0}$. Second, $p^{d_0}$ divides $|T:M|^{k_0}$ and $p$ divides $|M|$. In particular,
\[
d_0 \leq k_0 \cdot v_p(|T:M|) < k_0 \cdot v_p(|T|).
\] 

We claim that $n < 2^{n_T'} \cdot k_0$ if $p \neq 2$. Since $p^{d_0}$ divides $|G_0:H_0|$ and $p$ divides $|M_0|$,
\[
n \leq d_0 \leq v_p(|G_0:H_0|) < v_p(|G_0:N_0|) = v_p(|T^{k_0}|) = v_p(|T|) \cdot k_0.
\] 
If $p \neq 2$, then Proposition~\ref{prop:p-part}(ii) implies that $v_p(|T|) \leq 2^{n_T'}$, so $n < 2^{n_T'} \cdot k_0$, as claimed.

We claim that $N_0 = Z(G_0)$. Write $\overline{X} = XZ(G_0)/Z(G_0)$ for $X \leq G_0$. Let $\lambda\: \overline{G_0} \to \PGL_n(\FF_p)$ be the faithful primitive projective representation corresponding to $\rho_1\: G_0 \to \GL_n(\FF_p)$. Since $Z(T^{k_0}) = 1$, it follows that $Z(G_0) \leq N_0$, and let $\overline{\gamma}\: \overline{G_0} \to T^{k_0}$ factorise $\gamma\:G_0 \to T^{k_0}$. Note that $\ker\overline{\gamma} = \overline{N_0} \leq \Frat(\overline{G_0})$ since $N_0 \leq \Frat(G_0)$, so $K\overline{\gamma} < \overline{G_0}\overline{\gamma} = T^{k_0}$ for all $K < \overline{G_0}$. Applying \cite[Theorem~4]{ref:HarperLiebeck25}, we deduce that $\overline{N_0} = \ker\lambda = 1$, or said otherwise, $N_0 = Z(G_0)$, as claimed.

Therefore, $G_0/Z(G_0) = T^{k_0}$ and $Z(G_0) \leq H_0 \leq G_0$ satisfies $H_0/Z(G_0) = M^{k_0}$. If $G_0' < G_0$, then $T^{k_0} = (T^{k_0})' = G_0'Z(G_0)/Z(G_0) < T^{k_0}$, which is absurd, so $G_0' = G_0$. Said otherwise, $G_0$ is a perfect central extension $T^{k_0}$.

Let $V$ be the $\F_pG_0$-module afforded by the faithful primitive representation $\rho_0$, and let $E = \End_{\F_pG_0}(V)$. Write $e = |E:\F_p|$, so $e$ divides $d_0$ and we may view $G_0$ as an irreducible subgroup of $\GL_{d_0/e}(p^e)$ (see \cite[Lemma~2.10.2]{ref:KleidmanLiebeck}). 

We claim that $k = 1$. Suppose otherwise. Writing $T^k = T_1 \times \cdots \times T_k$, for each $1 \leq i \leq k$, fix $Z(G_0) \leq G_i \leq G_0$ such that $G_i/Z(G_0) = T_i$. Then $G_0$ is the central product $G_1 \circ \cdots \circ G_k$. Therefore, by \cite[Lemma~5.5.5]{ref:KleidmanLiebeck}, $G_1$ is an irreducible subgroup of $\GL_t(p^e)$ where $t^{k_0} = d_0/e$. Then
\[
v_p(|T|) = v_p(|G_1|) \leq v_p(|\GL_t(p^e)|) = et(t-1)/2 < et^2/2 \leq et^{k_0}/{k_0} = d_0/k_0,
\]
but this contradicts the fact that $d_0 \leq k_0 \cdot v_p(|T|)$. This proves the claim that $k=1$.

We claim that $T$ is a group of Lie type in characteristic $p$. Since $G_0/Z(G_0) = T$ is an absolutely irreducible subgroup of $\PGL_{d_0/e}(p^e)$, we have
\[
v_p(|T|) > v_p(|T:M|) \geq d_0 \geq d_0/e \geq R_p(T).
\]
By Proposition~\ref{prop:p-part}(iii), this implies that $T$ is a group of Lie type in characteristic $p$ or $p=2$ and $T \in \mathcal{E} = \{ A_8, \, \PSU_4(3), \, {\rm M}_{22}, \, {\rm J}_2, \, {\rm Suz} \}$. Therefore, it remains to exclude the possibility that $p=2$ and $T \in \mathcal{E}$. First assume that $T \in \{ \PSU_4(3), \, {\rm M}_{22}, \, {\rm J}_2, \, {\rm Suz} \}$. Consulting \cite[Proposition~5.3.8]{ref:KleidmanLiebeck}, $R_2(T)= v_2(T) - 1$. However,
\[
R_2(T) \leq d_0 \leq v_2(|T:M|) = v_2(|T|) - v_2(|M|),
\]
so $v_2(|M|) = 1$, but this is impossible (see \cite{ref:ATLAS}). It remains to assume that $T = A_8$. Consulting \cite[Proposition~5.3.7]{ref:KleidmanLiebeck}, $R_2(S) = 4 = v_2(S) - 2$. Therefore, $v_2(|M|) \leq 2$, but again this is impossible (the maximal subgroups of $A_8$ and $S_8$ can be obtained in \textsc{Magma} \cite{ref:Magma}, for example). This proves the claim that $T$ is a group of Lie type in characteristic $p$.

The proof is now completed by Proposition~\ref{prop:isbell_simple}.
\end{proof}

\section{Proof of Theorem~\ref{thm:main}} \label{s:proofs}

Before proving Theorem~\ref{thm:main}, it will be convenient to first prove the following special case.

\begin{shtheoremstar}
Let $G \leq \Sym(\Omega)$ have exactly two orbits $\Omega_1$ and $\Omega_2$, both of which are nontrivial. Assume that $|\Omega_1|$ divides $|\Omega_2|$ and $G$ acts faithfully and primitively on $\Omega_1$ and  primitively on $\Omega_2$. Then $G$ has a derangement.
\end{shtheoremstar}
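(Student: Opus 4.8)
The plan is to suppose, for contradiction, that $G$ has no derangement on $\Omega=\Omega_1\cup\Omega_2$ --- equivalently, that every $g\in G$ which is a derangement on $\Omega_1$ fixes a point of $\Omega_2$ --- and to run through the O'Nan--Scott types of the primitive group $G\leq\Sym(\Omega_1)$. Write $N_2$ for the kernel of the action of $G$ on $\Omega_2$, so that $G/N_2$ is faithful and primitive on $\Omega_2$, and write $\soc(G)=T^k$ with $T$ simple. The organising observation is that for any minimal normal subgroup $R$ of $G$ the subgroup $R\cap N_2$ is normal in $G$ and contained in $R$, hence equals $1$ or $R$. If $R$ is regular on $\Omega_1$ and $R\cap N_2=1$, then $R$ is a nontrivial normal subgroup of the primitive group $G/N_2$, so it is transitive on $\Omega_2$ and $|\Omega_2|$ divides $|R|=|\Omega_1|$; together with $|\Omega_1|\mid|\Omega_2|$ this makes $R$ regular on $\Omega_2$ as well, and then any nontrivial element of $R$ is a derangement on $\Omega$ --- a contradiction. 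So every regular minimal normal subgroup of $G$ lies in $N_2$. In types (TW), (HS) and (HC) this forces $\soc(G)\leq N_2$ (using both minimal normal subgroups in the (HS), (HC) cases), so $G/N_2$ is a primitive quotient of $G/\soc(G)$; since $|\Omega_1|$ is a positive power of $|T|$ here and $|T|\geq 60$, while $G/\soc(G)$ is an extension of a soluble group by a permutation group of degree at most $k$ and so is far too small --- and has far too restricted composition factors --- to have a primitive quotient of degree divisible by $|\Omega_1|$, this is impossible.

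This leaves type (HA) and types (AS), (SD), (PA), (CD). In type (HA), $\soc(G)=V$ is the elementary abelian regular normal subgroup and the paragraph above has reduced us to $V\leq N_2$. Put $H=G/V$, an irreducible subgroup of $\GL(V)=\GL_k(p)$; then $H$ acts primitively on $\Omega_2$ (through $G/N_2$), and $p^k=|\Omega_1|$ divides $|\Omega_2|$. Working in $G=V{:}H$, an element $(v,h)$ is a derangement on $\Omega_1=V$ exactly when $h$ fixes a nonzero vector of $V$ and $v\notin\im(h-1_V)$, while on $\Omega_2$ it acts as $h$. Hence if some $h\in H$ both fixes a nonzero vector of $V$ and is a derangement on $\Omega_2$, choosing $v\notin\im(h-1_V)$ gives a derangement of $G$ on $\Omega$ --- and such an $h$ is provided precisely by Theorem~\ref{thm:isbell}, applied to $H$ acting primitively on $\Omega_2$ with the faithful irreducible representation given by the inclusion $H\hookrightarrow\GL_k(p)$, since $p^k$ divides $|\Omega_2|$. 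This contradiction settles type (HA).

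In the remaining types $\soc(G)=T^k$ with $T$ nonabelian simple is the unique minimal normal subgroup, so the dichotomy gives either $\soc(G)\cap N_2=1$, whence $N_2=1$, or $\soc(G)\leq N_2$. For type (AS): the case $\soc(G)\leq N_2$ is impossible, since then $G/N_2$ would be soluble and primitive, hence affine with $|\Omega_2|$ a prime power, whereas $P(T)\leq|\Omega_1|\leq|\Omega_2|\leq|G:T|\leq|\Out(T)|<P(T)$; and in the case $\soc(G)\cap N_2=1$ one has $N_2\leq C_G(T)=1$, so $G$ is almost simple and faithfully primitive on both $\Omega_1$ and $\Omega_2$, with point stabilisers maximal subgroups $M_1$ and $M_2$ satisfying $|M_2|\mid|M_1|$. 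Then $\{M_1,M_2\}$ is a normal $2$-covering of $G$ by maximal subgroups; inspecting the Bubboloni--Spiga--Weigel classification of normal $2$-coverings of almost simple groups shows that in every case neither of $|M_1|$, $|M_2|$ divides the other --- notably in the family with $T=\PSp_{2m}(2^f)$ and $M_1$, $M_2$ of types $\mathrm{O}^+_{2m}(2^f)$, $\mathrm{O}^-_{2m}(2^f)$, where the orders differ by a factor $(q^m-1)/(q^m+1)$ --- contradicting $|M_2|\mid|M_1|$. For types (SD), (PA), (CD): when $\soc(G)\leq N_2$ one argues as in the first paragraph --- $|\Omega_1|$ is $|T|$ or the degree of a primitive $T$-action, raised to a power linear in $k$, whereas the order and composition factors of $G/\soc(G)$ (and Guralnick's classification of simple groups with a maximal subgroup of prime-power index, for the affine subcase), combined with the bounds of Proposition~\ref{prop:p-part} relating $v_p(|T|)$ to permutation and representation degrees, show that no primitive quotient of $G/\soc(G)$ has degree divisible by $|\Omega_1|$; and when $N_2=1$, so that $T^k$ acts faithfully and transitively on $\Omega_2$, the O'Nan--Scott description of the point stabilisers available to $T^k$ is incompatible with $|\Omega_1|\mid|\Omega_2|$, the diagonal-type action on $\Omega_1$ of length $|T|^{k-1}$ (and so on) being too long.

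The hard part is not the affine case --- which, granting the already-established Theorem~\ref{thm:isbell}, reduces in a few lines --- but the product-action and diagonal types (SD), (PA), (CD), and, to a lesser degree, (HC). There the socle of $G$ has a derangement on $\Omega_1$ but acts trivially or merely transitively on $\Omega_2$, so no derangement is visible ``inside the socle'', and instead of one clean structural trick one must carefully balance the structure of $G/\soc(G)$, the possible primitive actions $\Omega_2$, and the quantitative bounds of Section~\ref{ss:prelims_bounds}.
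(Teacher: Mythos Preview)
Your overall organisation matches the paper's --- split by O'Nan--Scott type on $\Omega_1$ and within each type use the dichotomy for $\soc(G)\cap N_2$ --- and your treatments of types (HA) and (AS) are correct and essentially as in Propositions~\ref{prop:technical_affine} and~\ref{prop:technical_as}.

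The genuine gap is in types (SD), (PA), (CD) with $N_2=1$. Your claim that the point stabilisers available to $T^k$ are ``incompatible with $|\Omega_1|\mid|\Omega_2|$'' is simply false. Take $G$ of type (PA) on both orbits with stabilisers $(M_1\cap T)^k$ and $(M_2\cap T)^k$ in $T^k$; then $|\Omega_i|=|T:M_i\cap T|^k$, and the divisibility $|T:M_1\cap T|\mid |T:M_2\cap T|$ is easy to arrange (e.g.\ $T=\PSL_2(11)$ with indices $11$ and $55$). Degree comparison does not work. The paper's argument here (Proposition~\ref{prop:technical_faithful}) is constructive and lives entirely \emph{inside} the socle: using Saxl's result that for any proper $M<S$ some element of $S$ lies in no $\Aut(S)$-conjugate of $M$, one chooses such $g_i\in T$ for each $M_i\cap T$ and takes $(g_1,g_2,1,\dots,1)\in T^k$, which is a derangement on both orbits (with analogous but simpler choices when one or both actions are diagonal-type). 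Your assertion that ``no derangement is visible inside the socle'' is therefore exactly backwards in this subcase.

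The subcase $\soc(G)\leq N_2$ (including (TW)) is also too vague as written. ``Far too small'' is not an argument: $|\Out(T)\wr S_k|$ exceeds $|T|^k$ once $k$ is large. The paper's argument shows, via Schreier's conjecture (so $\Out(T)$ is soluble) and in the affine subcase Guralnick's prime-power-index classification, that $\soc(G/N_2)$ is isomorphic to a subquotient of $S_k$, whence $|\Omega_2|$ divides $k!$; then one exhibits a prime $p$ with $p^k\mid|\Omega_1|\mid|\Omega_2|$, and $v_p(k!)<k$ gives the contradiction. Your sketch gestures at the right ingredients but does not supply this key step.
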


More precisely, we focus on almost simple groups in Section~\ref{ss:proofs_as} and affine groups in Section~\ref{ss:proofs_affine}, before completing the proof of Theorem~\ref{thm:main}* in Section~\ref{ss:proofs_proof}. Section~\ref{ss:proofs_proof} also includes proofs that deduce Theorem~\ref{thm:main} from Theorem~\ref{thm:main}* and Corollary~\ref{cor:prime_power} from Theorem~\ref{thm:main}.

\subsection{Theorem~\ref{thm:main}* for almost simple groups} \label{ss:proofs_as}

This section is dedicated to proving Theorem~\ref{thm:main}* for almost simple groups. We use the study of normal coverings of almost simple groups due to Bubboloni, Spiga and Weigel \cite{ref:BubboloniSpigaWeigel}. In the following two proofs, we write $\pi(n)$ for the set of prime divisors of a positive integer $n$.

\begin{proposition} \label{prop:as}
Let $G$ be an almost simple group with socle $T$. Let $H_1$ and $H_2$ be core-free subgroups of $G$ such that $|H_2|$ divides $|H_1|$. Then $T \neq \bigcup_{g \in G} (H_1 \cap T)^g \cup \bigcup_{g \in G} (H_2 \cap T)^g$.
\end{proposition}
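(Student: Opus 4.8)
The plan is to argue by contradiction. Suppose $T = \bigcup_{g \in G} M_1^g \cup \bigcup_{g \in G} M_2^g$, where $M_i = H_i \cap T$. Since $H_i$ is core-free in $G$ and $T$ is the unique minimal normal subgroup of $G$, each $M_i$ is a proper subgroup of $T$; moreover $M_i \trianglelefteq H_i$, so $H_i/M_i$ embeds into $\Out(T)$, and hence $|H_i| = |M_i| \cdot |H_i : M_i|$ with $\pi(|H_i:M_i|) \subseteq \pi(|\Out(T)|)$. The first step is a ``prime bridge'' converting the covering hypothesis into a statement about orders. For each prime $r$ dividing $|T|$, Cauchy's theorem gives an element of $T$ of order $r$, which is $G$-conjugate into $M_1$ or $M_2$; thus $\pi(|T|) \subseteq \pi(|M_1|) \cup \pi(|M_2|)$. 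Combining this with $|H_2| \mid |H_1|$ (so that $\pi(|M_2|) \subseteq \pi(|H_2|) \subseteq \pi(|H_1|) \subseteq \pi(|M_1|) \cup \pi(|\Out(T)|)$) yields
\[
\pi(|T|) \setminus \pi(|\Out(T)|) \subseteq \pi(|M_1|).
\]
So it is enough to produce a prime $r$ with $r \mid |T|$, $r \nmid |\Out(T)|$ and $r \nmid |M_1|$.

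To this end I would enlarge $M_1$ to a maximal subgroup $\overline{M}_1$ of $T$ (it suffices to find $r$ with $r \nmid |\overline{M}_1|$), and appeal to the work of Bubboloni, Spiga and Weigel~\cite{ref:BubboloniSpigaWeigel} on normal $2$-coverings of almost simple groups, which --- together with the orders of the maximal subgroups of the finite simple groups recorded in \cite{ref:KleidmanLiebeck, ref:BrayHoltRoneyDougal} and the $\mathbb{ATLAS}$~\cite{ref:ATLAS} --- restricts the possibilities for $(T, \overline{M}_1)$ to a short list. When $T$ is sporadic or one of the finitely many small groups that arise, the prime $r$ is found by direct inspection or in \textsc{Magma}~\cite{ref:Magma}. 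When $T = A_m$, one compares $v_r(|M_i|)$ with $v_r(|T|)$ using the base-$r$ digit-sum formula $v_r(m!) = (m - s_r(m))/(r-1)$. When $T$ is of Lie type over $\F_q$, one uses a primitive prime divisor (Zsigmondy) argument: a primitive prime divisor $r$ of $q^e - 1$ for a suitably large $e$ divides $|T|$, but by the Guralnick--Penttila--Praeger--Saxl classification~\cite{ref:GuralnickPenttilaPraegerSaxl97} of subgroups containing primitive prime divisor elements (and its analogues for exceptional groups) no maximal subgroup of the shape forced on $\overline{M}_1$ contains such an element; taking $e$ large enough also ensures $r \nmid |\Out(T)|$, so $r$ is as required.

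I expect the main obstacle to be the Lie-type analysis, and specifically keeping the witnessing prime $r$ outside $\pi(|\Out(T)|)$ when $|\Out(T)|$ is large --- an issue for $\PSL^\pm_n(q)$ because of the diagonal automorphisms of order $(n, q \mp 1)$, and more generally whenever the field-automorphism degree $f$ has small prime divisors. This means the Zsigmondy index $e$ must be chosen carefully, and the finitely many cases where Zsigmondy's theorem fails must be handled by direct computation. A second delicate point is the families in which the two covering subgroups genuinely have incomparable orders --- most notably $\O^+_{2m}(q)$ versus $\O^-_{2m}(q)$ inside $\Sp_{2m}(q)$ with $q$ even, and the analogous pairs of classes appearing in the alternating-group coverings --- where one must verify that the incomparability is witnessed by a prime coprime to $|\Out(T)|$, rather than being an artefact of the outer automorphisms.
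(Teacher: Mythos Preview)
Your outline is along the right lines and uses the same decisive input as the paper --- the Bubboloni--Spiga--Weigel classification of weak normal $2$-coverings of simple groups --- but the paper's argument is considerably shorter and sidesteps exactly the difficulty you flag at the end. The paper simply observes that if $T = \bigcup_{g\in G}(H_1\cap T)^g \cup \bigcup_{g\in G}(H_2\cap T)^g$, then $\{H_1\cap T, H_2\cap T\}$ is a weak normal $2$-covering of $T$, so it appears in the tables of \cite{ref:BubboloniSpigaWeigel}, and one checks directly that no entry there is compatible with $|H_2|\mid |H_1|$. The only refinement is a shortcut for this table check: from $\pi(|T|)\subseteq \pi(|H_1|)\cup\pi(|H_2|)$ and $|H_2|\mid|H_1|$ one gets $\pi(|T|)\subseteq\pi(|H_1|)$ \emph{on the nose}, so the pair $(G,H_1)$ already lies in the short list of \cite[Table~10.7]{ref:LiebeckPraegerSaxl00} via \cite[Corollary~5]{ref:LiebeckPraegerSaxl00}.

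The point of comparison is that you pass from $H_1$ to $M_1=H_1\cap T$ and therefore only obtain $\pi(|T|)\setminus\pi(|\Out(T)|)\subseteq\pi(|M_1|)$, which forces you to keep track of the primes in $|\Out(T)|$ and then hunt for a Zsigmondy-type witness coprime to $|\Out(T)|$. All of that is unnecessary: stay with $H_1$ rather than $M_1$, use $\pi(|T|)\subseteq\pi(|H_1|)$, and invoke \cite{ref:LiebeckPraegerSaxl00} in place of your proposed case-by-case primitive-prime-divisor analysis. With that change your plan collapses to the paper's two-line check. Your approach as written is workable, but the extra $\Out(T)$ bookkeeping is both avoidable and the source of the obstacles you anticipated.
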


\begin{proof}
For a contradiction, suppose otherwise. In particular, $T = \bigcup_{1 \leq i \leq 2} \bigcup_{a \in \Aut(T)} (H_i \cap T)^a$, so, by definition, $\{H_1 \cap T, H_2 \cap T\}$ is a \emph{weak normal $2$-covering} of $T$. By \cite[Theorem~1.5]{ref:BubboloniSpigaWeigel}, the weak normal $2$-coverings of finite simple groups are given in \cite[Tables~3--7]{ref:BubboloniSpigaWeigel} and none of the examples are compatible with $|H_2|$ dividing $|H_1|$. This check can be sped up by noting that since $\pi(|T|) \subseteq \pi(|H_1|) \cup \pi(|H_2|)$ and $|H_2|$ divides $|H_1|$, we have $\pi(|T|) \subseteq \pi(|H_1|)$, so, by \cite[Corollary~5]{ref:LiebeckPraegerSaxl00}, the only possibilities for $(G,H_1)$ are given in \cite[Table~10.7]{ref:LiebeckPraegerSaxl00}. 
\end{proof}

\begin{remark} \label{rem:as}
By Remark~\ref{rem:coverings}, Proposition~\ref{prop:as} establishes Conjecture~\ref{conj:main} for simple groups.
\end{remark}

We can now establish the main result on almost simple groups. 

\begin{proposition} \label{prop:technical_as}
Let $G \leq \Sym(\Omega)$ have exactly two orbits $\Omega_1$ and $\Omega_2$, both of which are nontrivial. Assume that $|\Omega_1|$ divides $|\Omega_2|$ and $G$ acts faithfully and primitively on $\Omega_1$ and  primitively on $\Omega_2$. Assume that $G$ is almost simple. Then $G$ has a derangement.
\end{proposition}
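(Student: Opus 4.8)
The plan is to set up the problem so that Proposition~\ref{prop:as} applies directly. Let $H_1$ be a point stabiliser for the action of $G$ on $\Omega_1$ and $H_2$ a point stabiliser for the action on $\Omega_2$; by Remark~\ref{rem:coverings}, $G$ having no derangement is equivalent to $G = \bigcup_{g\in G}H_1^g \cup \bigcup_{g\in G}H_2^g$. Since $G$ acts faithfully on $\Omega_1$, the subgroup $H_1$ is core-free in $G$, and since $G$ acts primitively on both orbits, both $H_1$ and $H_2$ are maximal in $G$. The orbit sizes give $|\Omega_i| = |G:H_i|$, so the divisibility hypothesis $|\Omega_1| \mid |\Omega_2|$ translates to $|G:H_1| \mid |G:H_2|$, equivalently $|H_2| \mid |H_1|$.

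The one genuine issue is that $H_2$ need not be core-free: the action on $\Omega_2$ is only assumed primitive, not faithful. So first I would handle the kernel $N_2$ of the action of $G$ on $\Omega_2$. If $N_2 \neq 1$, then since $G$ is almost simple with socle $T$ and $N_2 \trianglelefteq G$, we must have $T \leq N_2$, so $N_2$ acts nontrivially — in fact transitively, being normal in the transitive group $G$ — on $\Omega_1$. But $G$ acts faithfully on $\Omega_1$, so $N_2$ embeds into $\Sym(\Omega_1)$; the point is rather to invoke Lemma~\ref{lem:technical} or a direct averaging argument: $N_2$ acts trivially on $\Omega_2$ and transitively on $\Omega_1$, so by Lemma~\ref{lem:jordan_coset} the average number of fixed points on $\Omega_1$ over each coset of $N_2$ is $1$, while (by Jordan, or Fein--Kantor--Schacher as in Lemma~\ref{lem:technical}) some coset contains an element that is a derangement on $\Omega_2$; combining with an element of $N_2$ that is a derangement on $\Omega_1$ produces a derangement on $\Omega$. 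Concretely, this is exactly the situation of Lemma~\ref{lem:technical} with the roles of $\Omega_1,\Omega_2$ interchanged — I would check its hypothesis "$|\Omega_2|$ divides... " carefully, or just run the short averaging argument inline. Thus we may assume $N_2 = 1$, i.e. $H_2$ is core-free.

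With both $H_1$ and $H_2$ core-free in the almost simple group $G$ and $|H_2| \mid |H_1|$, suppose for contradiction $G$ has no derangement. Then $G = \bigcup_{g}H_1^g \cup \bigcup_{g}H_2^g$, and intersecting with $T$ and using that $H_i^g \cap T = (H_i\cap T)^g$ gives $T = \bigcup_{g\in G}(H_1\cap T)^g \cup \bigcup_{g\in G}(H_2\cap T)^g$ — directly contradicting Proposition~\ref{prop:as}. (One should note $H_i \cap T$ is core-free in $G$ since $H_i$ is, and $|H_2\cap T| = |H_2||T|/|H_2T|$ divides $|H_1\cap T|$ is not automatic, but Proposition~\ref{prop:as} as stated only needs $|H_2|$ dividing $|H_1|$, which we have, so this is fine.)

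The main obstacle I anticipate is purely bookkeeping rather than conceptual: making sure the reduction to $N_2 = 1$ is airtight, in particular verifying that when $T \leq N_2$ the numerical hypotheses needed to quote Lemma~\ref{lem:technical} (no prime divisor of $|\Omega_2|$ divides $|\Omega_1| - 1$, or whichever orientation is needed) actually hold — and if they do not, falling back to a self-contained Jordan-plus-averaging argument. Everything else is a direct appeal to Proposition~\ref{prop:as}, whose hypotheses we have arranged to match exactly.
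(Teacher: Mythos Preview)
Your overall architecture matches the paper's: reduce to the situation where both $H_1$ and $H_2$ are core-free and then invoke Proposition~\ref{prop:as}. The divergence is entirely in how you establish that $H_2$ is core-free.

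There is a genuine gap in your treatment of the case $N_2 \neq 1$, and it is not merely bookkeeping. Your inline sketch ``combine a derangement on $\Omega_2$ with an element of $N_2$ that is a derangement on $\Omega_1$'' does not work as stated: if $h$ is a derangement on $\Omega_2$ and $n \in N_2$ is a derangement on $\Omega_1$, there is no reason $nh$ is a derangement on $\Omega_1$. What you actually need is the full argument of Lemma~\ref{lem:technical} (with $\Omega_1,\Omega_2$ swapped), and that argument genuinely requires the numerical hypothesis ``no prime divisor of $|\Omega_2|$ divides $|\Omega_1|-1$''; running the averaging inline does not avoid it. This hypothesis does \emph{not} follow from $|\Omega_1| \mid |\Omega_2|$ alone (e.g.\ $|\Omega_1|=3$, $|\Omega_2|=6$, $p=2$). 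It \emph{does} hold here, but for a reason you have not identified: if $T \leq N_2$ then $G/N_2$ is a quotient of $G/T \leq \Out(T)$, hence soluble by Schreier, and a soluble primitive group has prime-power degree, so $|\Omega_2|=p^a$ and $|\Omega_1|=p^b$, whence $p \nmid |\Omega_1|-1$. Alternatively (and more directly), $T \leq N_2$ forces $|\Omega_1| \leq |\Omega_2| = |G{:}H_2| \leq |G{:}T| \leq |\Out(T)|$, while faithfulness of $G$ on $\Omega_1$ gives $|\Omega_1| \geq P(T)$; since $P(T) > |\Out(T)|$ for every nonabelian simple $T$, the case $N_2 \neq 1$ cannot occur at all.

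The paper avoids this case split entirely. It argues by contradiction from the outset: if $G$ has no derangement then $\pi(|G|)=\pi(|H_1|)$ (since $\pi(|H_2|) \subseteq \pi(|H_1|)$), so $(G,H_1)$ lies in the short list of \cite[Table~10.7]{ref:LiebeckPraegerSaxl00}, and for every entry one checks $|G{:}H_1| > |\Out(T)|$, giving $|H_2| \leq |H_1| < |T|$ and hence $H_2$ core-free, after which Proposition~\ref{prop:as} applies. This trades your dynamical argument for a table lookup; your route is more self-contained once the gap above is filled.
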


\begin{proof}
For a contradiction, suppose that $G$ has no derangement on $\Omega$. Let $H_1$ and $H_2$ be stabilisers of points in $\Omega_1$ and $\Omega_2$, respectively, so $H_1$ is core-free as $G$ is faithful on $\Omega_1$. Then $G$ is the union of conjugates of $H_1$ and $H_2$, so $\pi(|G|) = \pi(|H_1|) \cup \pi(|H_2|)$. Moreover, $|H_2|$ divides $|H_1|$, so $\pi(|G|) = \pi(|H_1|)$ and $(G,H_1)$ appears in \cite[Table~10.7]{ref:LiebeckPraegerSaxl00}. Writing $T = \soc(G)$, we note that $|G:H_1| > |\Out(T)|$, so $|H_2| \leq |H_1| < |T|$ and $H_2$ is also core-free. However, $T = \bigcup_{g \in G} (H_1 \cap T)^g \cup \bigcup_{g \in G} (H_2 \cap T)^g$, which contradicts Proposition~\ref{prop:as}. 
\end{proof}

\subsection{Theorem~\ref{thm:main}* for affine groups} \label{ss:proofs_affine}

This section is dedicated to proving Theorem~\ref{thm:main}* when the faithful primitive action is affine. We begin with some preliminary lemmas.

\begin{lemma} \label{lem:affine}
Let $G = V{:}H$ be a finite primitive affine group. Let $K$ be a maximal subgroup of $G$. Then either $K$ is a complement of $V$ or $K = V{:}M$ for a maximal subgroup $M$ of $H$.
\end{lemma}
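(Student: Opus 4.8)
The plan is to exploit the fact that $V$ is the unique minimal normal subgroup of $G$ (Remark~\ref{rem:o'nan-scott}\ref{rem:o'nan-scott_affine}) and split according to whether the maximal subgroup $K$ contains $V$ or not. First I would dispose of the case $V \not\leq K$: then $VK = G$ by maximality of $K$, and $V \cap K$ is normalised by $V$ (as $V$ is abelian) and by $K$, hence by $VK = G$; since $V$ is a minimal normal subgroup and $V \cap K \leq V$, we get $V \cap K = 1$ or $V \cap K = V$. The latter is excluded since $V \not\leq K$, so $V \cap K = 1$, and then $|K| = |G|/|V| = |H|$, so $K$ is a complement of $V$ in $G$. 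This gives the first alternative.

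Next I would treat the case $V \leq K$. Then $K/V$ is a subgroup of $G/V \cong H$, and by the correspondence theorem $K/V$ is maximal in $G/V$ precisely because $K$ is maximal in $G$. So $K/V = M$ for a maximal subgroup $M$ of $H$, which (under the identification $G = V{:}H$, so that $H$ is a fixed complement and every subgroup containing $V$ has the form $V{:}(K \cap H)$ — here one uses the Dedekind modular law, $K = K \cap VH = V(K \cap H)$ since $V \leq K$) means $K = V{:}M$ with $M = K \cap H$ maximal in $H$. This is the second alternative, completing the proof.

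The only mild subtlety — hardly an obstacle — is making the identification $K = V{:}M$ precise in the second case: one must check that $K \cap H$ is genuinely a complement to $V$ inside $K$ and that its maximality in $H$ follows from (equivalently, is equivalent to) the maximality of $K$ in $G$. Both are immediate from the modular law and the correspondence theorem once one fixes the complement $H$, so I would state them in one line each rather than belabour them. No case analysis beyond $V \leq K$ versus $V \not\leq K$ is needed, and no properties of primitive affine groups are used beyond the uniqueness and minimality of the normal subgroup $V$.
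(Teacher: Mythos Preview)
Your proof is correct and essentially matches the paper's. The paper phrases the dichotomy as $K\varphi < H$ versus $K\varphi = H$ (where $\varphi\colon G \to H$ is the projection with kernel $V$) and in the latter case invokes irreducibility of $H$ on $V$ rather than minimality of $V$ as a normal subgroup, but this is the same case split and the same argument in different words.
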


\begin{proof}
Let $\p\:G \to H$ be the projection onto $H$ with kernel $V$. First assume that $K\p < H$. Then $K \leq V{:}(K\p)$, so by the maximality of $K$, we deduce that $K = V{:}(K\p)$, where $K\p$ is maximal in $H$. Now assume that $K\p = H$. Suppose that $V \cap K$ is nontrivial, and fix $0 \neq v \in V \cap K$. Since $K\p = H$ and $H$ acts irreducibly on $V$, we deduce that $\< v^K \> = V$, so $V \leq K$, but this implies that $K = G$, which is absurd. Therefore, $V \cap K = 1$, so $K$ is a complement of $G$.
\end{proof}

\begin{lemma} \label{lem:affine_derangement}
Let $M \leq H \leq \GL(V)$ and write $G = V{:}H$. Let $h \in H$ and $v \in V$. Assume that $h$ is a derangement in the action of $H$ on $H/M$ and that $v \not\in \im(h-1)$. Then $(v,h^{-1}) \in G$ is contained in no $G$-conjugate of $H$ and no $G$-conjugate of $V{:}M$.
\end{lemma}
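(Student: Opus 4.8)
The plan is to work in coordinates. Realise $G = V{:}H$ inside $\AGL(V)$, writing a general element as a pair $(x,k)$ with $x \in V$ and $k \in H$, acting on $V$ by the affine map $w \mapsto x+kw$; then $(x_1,k_1)(x_2,k_2) = (x_1+k_1x_2,\,k_1k_2)$, the normal subgroup $V$ is $\{(x,1):x\in V\}$, and $H=\{(0,k):k\in H\}$ is the stabiliser of $0$. I would prove the two assertions — that $(v,h^{-1})$ lies in no $G$-conjugate of $H$, and in no $G$-conjugate of $V{:}M$ — separately, since the two families of conjugates are governed by different data.

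For the conjugates of $H$, I would first compute $(u,1)^{-1}(0,k)(u,1) = ((k-1)u,\,k)$ for $u\in V$ and $k\in H$. Since $G=VH$ and $H$ normalises itself, it follows that the $G$-conjugates of $H$ are exactly the ``twisted diagonals'' $H_w := \{((k-1)w,\,k) : k\in H\}$ as $w$ ranges over $V$. Hence $(v,h^{-1})$ lies in some $G$-conjugate of $H$ if and only if $v\in\im(h^{-1}-1)$. I would then note that $\im(h-1)$ is $\<h\>$-invariant (because $h-1$ commutes with $h$), so $\im(h^{-1}-1)=\im(-h^{-1}(h-1))=h^{-1}\im(h-1)=\im(h-1)$; the hypothesis $v\notin\im(h-1)$ then closes this case.

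For the conjugates of $V{:}M$, I would use that $V\leqn G$ and $V\leq V{:}M$ to see that $V{:}M$ is normalised by $V$; combined with $G=VH$, this shows that every $G$-conjugate of $V{:}M$ has the form $V{:}M^k$ for some $k\in H$. Therefore $(v,h^{-1})$ lies in a $G$-conjugate of $V{:}M$ if and only if $h^{-1}\in M^k$ for some $k\in H$, equivalently $h$ is $H$-conjugate into $M$, equivalently $h$ fixes a point of $H/M$ — which contradicts the assumption that $h$ is a derangement on $H/M$.

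I do not anticipate a genuine obstacle: the statement unwinds to the two elementary facts that conjugating an element of $H$ by $V$ changes its linear part $k$ only by adding a vector of $\im(k-1)$, and that the conjugates of the ``parabolic'' subgroup $V{:}M$ are controlled entirely by the $H$-conjugates of $M$. The only point that warrants a little care is the identity $\im(h^{-1}-1)=\im(h-1)$, which is exactly what lets the hypothesis, phrased for $h$, be applied to the element $(v,h^{-1})$.
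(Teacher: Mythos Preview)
Your proof is correct and follows essentially the same approach as the paper: conjugates of $H$ are point stabilisers in the affine action, and conjugates of $V{:}M$ are controlled by the $H$-conjugates of $M$ via the quotient $G/V\cong H$. The only difference is cosmetic: the paper uses the right-action convention $w(v,g)=(w+v)g$, under which $(v,h^{-1})$ fixing $u$ gives $u(h-1)=v$ directly, whereas your left-action convention yields $v\in\im(h^{-1}-1)$ first and then requires your (correct) observation that $\im(h^{-1}-1)=\im(h-1)$.
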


\begin{proof}
Note that $h^{-1} \not\in \bigcup_{g \in H} M^g$, so $(v,h^{-1}) \not\in \bigcup_{g \in G} (V{:}M)^g$. Suppose that $(v,h^{-1}) \in H^g$ for some $g \in G$. Then $H^g$ is the stabiliser of a vector $u \in V$, so $u = u(v,h^{-1}) = (u+v)h^{-1}$. This means that $u(h-1) = v$, which contradicts the hypothesis that $v \not\in \im(h-1)$.
\end{proof}

We can now establish the main result on affine groups. This is where we use Theorem~\ref{thm:isbell}.

\begin{proposition} \label{prop:technical_affine}
Let $G \leq \Sym(\Omega)$ have exactly two orbits $\Omega_1$ and $\Omega_2$, both of which are nontrivial. Assume that $|\Omega_1|$ divides $|\Omega_2|$. Assume that $G = V{:}H_1$ is a faithful primitive affine group on $\Omega_1 = V$, and assume that $G$ acts primitively on $\Omega_2$. Then $G$ has a derangement.
\end{proposition}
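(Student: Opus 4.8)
The plan is to argue by contradiction: suppose $G$ has no derangement on $\Omega$, and let $H_2$ be the stabiliser in $G$ of a point of $\Omega_2$, so that $G$ is the union of the $G$-conjugates of $H_1$ and of $H_2$. Since $G$ acts primitively on $\Omega_2$, the subgroup $H_2$ is maximal in $G$, so by Lemma~\ref{lem:affine} either $H_2$ is a complement of $V$ in $G$, or $H_2 = V{:}M$ for a maximal subgroup $M$ of $H_1$. I would treat the two cases separately.

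If $H_2$ is a complement of $V$, I would exhibit a derangement by hand. Fix a nonzero $v \in V$ and regard it as the translation $(v,1) \in G$; it has no fixed point on $\Omega_1 = V$. Moreover, since $V \trianglelefteq G$ and $V \cap H_2 = 1$, we have $V \cap H_2^g = (V \cap H_2)^g = 1$ for every $g \in G$, so $v$ lies in no $G$-conjugate of $H_2$ and hence has no fixed point on $\Omega_2$ either. Thus $(v,1)$ is a derangement on $\Omega$, a contradiction. (Here $|\Omega_1| = |\Omega_2| = |V|$ regardless, so the divisibility hypothesis plays no role, and this case is routine.)

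The substantive case is $H_2 = V{:}M$ with $M$ maximal in $H_1$, and this is where the real difficulty lies. Put $d = \dim_{\F_p} V$, so $|\Omega_1| = |V| = p^d$ and $H_1 \leq \GL_d(p)$ acts irreducibly on $V$. Now $|\Omega_2| = |G{:}H_2| = |H_1{:}M|$, so the hypothesis that $|\Omega_1|$ divides $|\Omega_2|$ becomes exactly $p^d \mid |H_1{:}M|$ — precisely the hypothesis of Theorem~\ref{thm:isbell}, applied to $H_1$ acting primitively on $H_1/M$ (primitive because $M$ is maximal) together with the faithful irreducible representation $\rho\colon H_1 \hookrightarrow \GL_d(p)$ afforded by $V$. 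That theorem yields $h \in H_1$ which is a derangement of $H_1$ on $H_1/M$ and which fixes a nonzero vector of $V$; then $h-1$ is singular on $V$, so $\im(h-1) \neq V$, and I would choose $v \in V \setminus \im(h-1)$. By Lemma~\ref{lem:affine_derangement}, the element $(v,h^{-1}) \in G$ then lies in no $G$-conjugate of $H_1$ and in no $G$-conjugate of $V{:}M = H_2$; as these conjugates are exactly the point stabilisers of $G$ on $\Omega_1$ and on $\Omega_2$, the element $(v,h^{-1})$ is a derangement on $\Omega$, the desired contradiction. The crux is the appeal to Theorem~\ref{thm:isbell}, which is far from elementary — it rests on the CFSG, the reductions of Section~\ref{ss:isbell_reduction}, the Harper--Liebeck theorem \cite{ref:HarperLiebeck}, the bounds of Proposition~\ref{prop:p-part}, and the Lie-type base case Proposition~\ref{prop:isbell_simple}; granting it, the only genuinely new step here is to recognise that the representation to feed the theorem is the linear action of $H_1$ on $V$, and then to translate its output into an affine derangement via Lemma~\ref{lem:affine_derangement}.
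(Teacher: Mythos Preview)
Your proof is correct and follows essentially the same route as the paper's own proof: split via Lemma~\ref{lem:affine} into the complement case (where any nontrivial element of $V$ is a derangement, since $V\cap H_1=V\cap H_2=1$) and the case $H_2=V{:}M$, then invoke Theorem~\ref{thm:isbell} and feed the resulting $h$ into Lemma~\ref{lem:affine_derangement}. The only differences are cosmetic --- you phrase it as a contradiction and spell out the complement case more explicitly than the paper does.
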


\begin{proof}
Note that $H_1$ is the stabiliser of a point of $\Omega_1$, and let $H_2$ be the stabiliser of a point of $\Omega_2$, so $H_2$ is maximal. First assume that $H_2$ is a complement of $V$. Then $H_1 \cap V = H_2 \cap V = 1$, so $\{H_1,H_2\}$ is not a normal covering of $G$, so $G$ has a derangement on $\Omega$. By Lemma~\ref{lem:affine}, we may now assume that $H_2 = V{:}M$ for a maximal subgroup $M$ of $H_1$. Since $|\Omega_1|$ divides $|\Omega_2|$, we deduce that $|G:H_1|=|V|$ divides $|G:H_2|=|H_1:M|$. Write $|V| = p^d$. By Theorem~\ref{thm:isbell}, there exists $h \in H_1$ that fixes a nonzero vector in $V$ and is a derangement in the action of $H_1$ on $H_1/M$. In particular, $\ker(h-1) \neq 0$, so $\im(h-1)$ is a proper subspace of $V$. Fix $v \in V$ such that $v \not\in \im(h-1)$. Then $(v,h^{-1})$ is a derangement on $\Omega$ by Lemma~\ref{lem:affine_derangement}.
\end{proof}

\subsection{Proof of Theorems~\ref{thm:main}* and~\ref{thm:main}} \label{ss:proofs_proof}

Before proving Theorem~\ref{thm:main}* we consider a final special case.

\begin{proposition} \label{prop:technical_faithful}
Let $G \leq \Sym(\Omega)$ have exactly two orbits $\Omega_1$ and $\Omega_2$, both of which are nontrivial. Assume that $|\Omega_1|$ divides $|\Omega_2|$ and $G$ acts faithfully and primitively on $\Omega_1$ and $\Omega_2$. Then $G$ has a derangement.
\end{proposition}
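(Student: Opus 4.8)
The plan is to argue by contradiction: suppose $G$ has no derangement on $\Omega=\Omega_1\cup\Omega_2$, and let $H_1,H_2$ be the stabilisers in $G$ of points of $\Omega_1,\Omega_2$. Since $G$ is faithful on each $\Omega_i$ the subgroup $H_i$ is core-free, since $\Omega_i$ is nontrivial it is proper, and since $|\Omega_1|$ divides $|\Omega_2|$ we have $|H_2|$ dividing $|H_1|$; by Remark~\ref{rem:coverings}, the absence of a derangement says exactly that $\{H_1,H_2\}$ is a normal $2$-covering of $G$. I would then apply the O'Nan--Scott Theorem to the faithful primitive group $G\leq\Sym(\Omega_1)$. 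Type (HA) is dispatched by Proposition~\ref{prop:technical_affine} and type (AS) by Proposition~\ref{prop:technical_as}, so the work lies in the remaining types (SD), (HS), (PA), (CD), (HC), (TW), where $\soc(G)=T^k$ for a nonabelian simple group $T$ and some $k\geq 2$; write $\soc(G)=T_1\times\cdots\times T_k$ for the decomposition into minimal normal subgroups.

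The main step is to pass to the socle. For $i=1,2$, the socle is transitive on $\Omega_i$, so an element of $\soc(G)$ fixes a point of $\Omega_i$ exactly when it lies in $\bigcup_{g\in\soc(G)}M_i^g$, where $M_i=H_i\cap\soc(G)$ is the stabiliser in $\soc(G)$ of a point of $\Omega_i$; intersecting the covering $G=\bigcup_g H_1^g\cup\bigcup_g H_2^g$ with $\soc(G)$ shows that $T^k$ is the union of the $\soc(G)$-conjugates of the proper subgroups $M_1$ and $M_2$, with $|M_2|$ dividing $|M_1|$. Furthermore, since $G$ is primitive of one of the listed types on each $\Omega_i$, the point-stabiliser descriptions in \cite{ref:LiebeckPraegerSaxl88} tell us that, relative to the decomposition $T_1\times\cdots\times T_k$, each $M_i$ has one of three shapes: trivial (type (TW)); a \emph{product subgroup} $D_1\times\cdots\times D_k$ with every $D_j<T_j$ proper and the $D_j$ all conjugate under $\Aut(T)$, hence of equal order (type (PA)); or a \emph{diagonal subgroup} $\prod_{B\in\mathcal P}\Delta_B$, where $\mathcal P$ is a partition of $\{1,\dots,k\}$ all of whose blocks $B$ have $|B|\geq 2$ and $\Delta_B\leq\prod_{j\in B}T_j$ is a full (possibly twisted) diagonal (types (SD), (HS), (CD), (HC)).

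It then remains to contradict the existence of such a $2$-covering of $T^k$ by exhibiting an element in no conjugate of $M_1$ or of $M_2$, using two elementary facts: a tuple $(s_1,\dots,s_k)$ lies in a conjugate of a product subgroup $D_1\times\cdots\times D_k$ only if each $s_j$ lies in $\bigcup_g D_j^g$, and it lies in a conjugate of a diagonal subgroup $\prod_B\Delta_B$ only if, in each block $B$, the coordinates $\{s_j:j\in B\}$ pulled back to $T$ through the defining isomorphisms all lie in one conjugacy class of $T$. If at least one $M_i$, say $M_1$, is a diagonal subgroup, let $B$ be the block of its partition containing the index $1$ (so $|B|\geq 2$) and take $s=(s_1,1,\dots,1)$ with $s_1\neq 1$; then $B$ has one coordinate pulling back to a nontrivial element and another to $1$, so $s$ avoids every conjugate of $M_1$, and choosing $s_1$ suitably — any nontrivial element when $M_2$ is diagonal or trivial, and an element of $T_1$ outside $\bigcup_g\pi_1(M_2)^g$, which exists by Jordan's theorem since $\pi_1(M_2)$ is proper, when $M_2$ is a product subgroup — keeps $s$ outside every conjugate of $M_2$ as well. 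If instead both $M_1,M_2$ are product subgroups, projecting $\soc(G)$ onto $T_1$ turns the covering into a normal $2$-covering of $T_1\cong T$ by the core-free subgroups $\pi_1(M_1),\pi_1(M_2)$; since $M_i=(T_{\delta_i})^k$, the relation $|M_2|\mid|M_1|$ forces $|\pi_1(M_2)|\mid|\pi_1(M_1)|$, so Proposition~\ref{prop:as} (applied with ambient almost simple group $T$) gives a contradiction. Finally, if some $M_i$ is trivial the covering collapses to $T^k=\bigcup_g M_{3-i}^g$, which the preceding element constructions rule out for a single proper subgroup $M_{3-i}$ (and if both $M_i$ are trivial this is absurd since $\soc(G)\neq 1$).

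I expect the case in which $M_1$ and $M_2$ are both product subgroups — equivalently $G$ is of type (PA) on both $\Omega_1$ and $\Omega_2$ — to be the main obstacle: it is the only one not settled by writing down a derangement directly, and it rests on Proposition~\ref{prop:as} (hence on the classification-based work of Bubboloni, Spiga and Weigel) together with the observation that the equal-power form $(T_\delta)^k$ of the point stabilisers makes the order divisibility descend to a single coordinate. The other cases should be routine once the O'Nan--Scott point-stabiliser shapes and the two membership criteria above are in place.
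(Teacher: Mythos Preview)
Your argument is correct, but it is organised differently from the paper's and leans on heavier machinery in two places. First, you dispatch type~(HA) by quoting Proposition~\ref{prop:technical_affine} (and hence Theorem~\ref{thm:isbell}); the paper instead observes that whenever $G$ has a normal subgroup $N$ that is regular on one $\Omega_i$, any derangement for $N$ on the other orbit (which exists by Jordan, since $N$ is nontrivial and hence transitive there) is automatically a derangement on both. This short argument already disposes of (HA), (HS), (HC) and (TW) on either orbit, so the paper never needs Proposition~\ref{prop:technical_affine} in this proof. Second, in the (PA)--(PA) case you project onto a single factor and invoke Proposition~\ref{prop:as} (hence the Bubboloni--Spiga--Weigel classification) to rule out the resulting normal $2$-covering of $T$; the paper instead uses \emph{two} coordinates: by \cite[Proposition~2]{ref:Saxl88} there exist $g_1,g_2\in S$ with $g_i$ outside every $\Aut(S)$-conjugate of $M_i\cap S$, and then $(g_1,g_2,1,\dots,1)$ is a derangement directly. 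The paper's route is thus more elementary and, incidentally, does not use the divisibility hypothesis at all once $k\geq 2$. On the other hand, your explicit reduction from $G$-conjugates of $H_i$ to $T^k$-conjugates of $M_i$ (via $G=T^kH_i$) is cleaner than the paper's implicit handling of this point, and it lets you get away with Jordan's theorem rather than Saxl in the mixed diagonal/product case.
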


\begin{proof}
Let $H_1$ and $H_2$ be stabilisers of points in $\Omega_1$ and $\Omega_2$, respectively.

First assume that $G$ is almost simple and suppose that $G$ does not have a derangement. Then $G = \bigcup_{g \in G} H_1^g \cup \bigcup_{g \in G} H_2^g$, which contradicts Proposition~\ref{prop:as}. For the remainder of the proof we may assume that $G$ is not almost simple.

Next assume that $G$ has a normal subgroup $N$ that is regular on $\Omega_2$. Since $G$ is faithful and primitive on $\Omega_1$, the nontrivial normal subgroup $N$ acts transitively on $\Omega_1$. Therefore, there exists $g \in N$ that is a derangement on $\Omega_1$. However, $g$ is a derangement on $\Omega_2$ since $N$ is regular on $\Omega_2$, so $G$ is a derangement on $\Omega$. A symmetric argument handles the case where $G$ has a normal subgroup that is regular on $\Omega_1$. Therefore, for the remainder of the proof we may assume that $G$ has no normal subgroup that is regular on $\Omega_1$ or $\Omega_2$. 

Let $T = \soc(G)$. By Remark~\ref{rem:o'nan-scott}\ref{rem:o'nan-scott_regular}, $T$ is a minimal normal subgroup and there exists a nonabelian simple group $S$ and an integer $k \geq 1$ such that $T = S_1 \times \cdots \times S_k$ where $S_i \cong S$ for each $1 \leq i \leq k$. Since $G$ is not almost simple, $k \geq 2$. Since $T$ is not regular on $\Omega_1$ or $\Omega_2$, by Remark~\ref{rem:o'nan-scott}\ref{rem:o'nan-scott_rest}, $G$ does not have type (TW) on $\Omega_1$ or $\Omega_2$.

Since $T$ is a nontrivial normal subgroup, $T$ is transitive on $\Omega_1$ and $\Omega_2$, so $G = TH_1 = TH_2$, so, by conjugation, $H_1$ and $H_2$ both transitively permute the $k$ simple factors of $T$ since $T$ is a minimal normal subgroup of $G$. Hence, there exist $R_1, R_2 \leq S$ such that for all $1 \leq i \leq k$, the projections of $H_1 \cap T$ and $H_2 \cap T$ onto $S_i$ are $R_1$ and $R_2$ respectively. 

Let $i \in \{1,2\}$. We will make two observations.

Consider the case $R_i < S$. Since $G$ does not have type (TW) on $\Omega_i$, we know that $G$ has type (PA) on $\Omega_i$. In particular, there exists an almost simple group $S \leq A \leq \Aut(S)$ and a maximal subgroup $M_i$ of $A$ such that $H_i \cap T = (M_i \cap S)^k$.

Consider the case $R_i = S$, so $G$ has type (SD) or (CD) on $\Omega_i$. In this case, we can partition $I = \{1, \dots, k\}$ as $\{I_{i1}, \dots, I_{il_i} \}$ where $|I_{ij}| = k/l_i$ and write $H_i \cap T = D_{i1} \times \cdots \times D_{il_i}$ where $D_{ij} = \{ (x^{\alpha_{ij1}}, \dots, x^{\alpha_{ijl_i}}) \mid x \in S \} \leq \prod_{r \in I_{ij}} S_r$ for $\alpha_{ijr} \in \Aut(S)$. In particular, $S_1 \cap H_i = 1$. 

With these observations in place, we can complete the proof by dividing into three cases.

First assume that both $R_1$ and $R_2$ equal $S$. Let $g \in S_1$ be nontrivial. Then $g$ is contained in no conjugate of $H_1$ or $H_2$, so $g$ is a derangement on $\Omega = \Omega_1 \cup \Omega_2$.

Next assume that exactly one of $R_1$ and $R_2$ is equal to $S$. Without loss of generality we may assume that $R_2 = S$. By \cite[Proposition~2]{ref:Saxl88}, there exists $g \in S_1$ such that, viewing $g$ as an element of $S$, we have $g \not\in \bigcup_{a \in A}(M_1 \cap S)^a$, so $g$ is contained in no conjugate of $H_1$, and $g$ is contained in no conjugate of $H_2$ since $g \in S_1$. Therefore, $g$ is a derangement on $\Omega$.

Finally assume that neither $R_1$ nor $R_2$ is equal to $S$. For each $i \in \{1,2\}$, by applying \cite[Proposition~2]{ref:Saxl88}, there exists $g_i \in S_1$ such that, viewing $g_i$ as an element of $S$, we have $g_i \not\in \bigcup_{a \in A}(M_i \cap S)^a$. Hence, $g = (g_1,g_2,1,\dots,1) \in T$ is in no conjugate of $H_1$ or $H_2$, so $g$ is a derangement on $\Omega$.
\end{proof}

\begin{proof}[Proof of Theorem~\ref{thm:main}*]
Let $K_2$ be the kernel of the action of $G$ on $\Omega_2$. By Propositions~\ref{prop:technical_as} and~\ref{prop:technical_affine}, we can assume that the action of $G$ on $\Omega_1$ is not affine or almost simple, and by Proposition~\ref{prop:technical_faithful}, we can assume that $K_2$ is nontrivial. Let $T = \soc(G)$, so $T \cong S^k$ where $S$ is a nonabelian finite simple group and $k \geq 2$. In particular, $G \leq \Aut(S) \wr S_k$. 

First assume that $G$ has a minimal normal subgroup $N$ that acts nontrivially on $\Omega_2$. If $G$ has a unique minimal normal subgroup, then $N \leq K_2$, which is impossible since $K_2$ is trivial on $\Omega_2$. Therefore, $G$ has exactly two minimal normal subgroups, which are necessarily regular on $\Omega_1$ (see Remark~\ref{rem:o'nan-scott}(iii)). Since $N$ is a normal subgroup of $G$ acting nontrivially on $\Omega_2$ and $G$ acts primitively on $\Omega_2$, we know that $N$ acts transitively on $\Omega_2$. Therefore, fix $g \in N$ that is a derangement on $\Omega_2$. Now $g$ is nontrivial on $\Omega_1$ since $G$ is faithful on $\Omega_1$, but $N$ acts regularly on $\Omega_1$, so $g$ is a derangement on $\Omega_1$. Hence, $g$ is a derangement on $\Omega$.

It now remains to exclude the possibility that $T$ acts trivially on $\Omega_2$. For a contradiction, suppose that $T$ does act trivially on $\Omega_2$, so $T \leq K_2$. Write $\overline{G} = G/K_2$ and note that $\overline{G}$ has a faithful primitive action on $\Omega_2$. 

We claim that $\soc(\overline{G})$ is isomorphic to a quotient of a subgroup of $S_k$. To prove this, we first need to introduce some notation. Note that $\overline{G} = (G/T)/(K_2/T)$. Write $X = G/T$ and let $\pi\: X \to \overline{G}$ be the quotient map defined by $K_2/T$. Let $Y \leq X$ be the full preimage under $\pi$ of $\soc(\overline{G})$. Let $W = \Out(S) \wr S_k$, and let $B = \Out(S)^k$ be the base group of $W$, which is a soluble normal subgroup of $W$. Note that $Y \leq X \leq W$. Therefore, if $(Y \cap B)\pi$ is trivial, then $Y \cap B \leq \ker\pi$, so $\soc(\overline{G}) = Y\pi$ is a quotient of $Y/(Y \cap B) \cong YB/B \preccurlyeq S_k$, as required. Hence, to prove the claim, it suffices to prove that $(Y \cap B)\pi$ is trivial. To do this, we divide into two cases.

First assume that $\overline{G}$ is not affine on $\Omega_2$. Then $\soc(\overline{G})$ is a nonabelian characteristically simple group, so, in particular, has no nontrivial soluble normal subgroups. However, $Y \cap B$ is a soluble normal subgroup of $Y$, so $(Y \cap B)\pi$ is a soluble normal subgroup of $Y\pi = \soc(\overline{G})$, which implies that $(Y \cap B)\pi$ is trivial.

Now assume that $\overline{G}$ is affine on $\Omega_2$. Then $\soc(\overline{G})$ is an elementary abelian $p$-group and $|\Omega_2|$ is a power of $p$. Since $|\Omega_1|$ divides $|\Omega_2|$, the degree $|\Omega_1|$ is a power of $p$ too. Since we have assumed that the action of $G$ on $\Omega_1$ is not affine or almost simple, the action of $G$ on $\Omega_1$ must have type (PA) (see Remark~\ref{rem:o'nan-scott}\ref{rem:o'nan-scott_degree} for example). In particular, $S$ has a proper subgroup whose index is a power of $p$. The possibilities for such $S$ and $p$ are given in \cite[Theorem~1]{ref:Guralnick83}, and in each case it is easy to see that $p$ does not divide $|\!\Out(S)|$. Therefore, $\soc(\overline{G})$ and $B \cong \Out(S)^k$ have coprime orders, so $Y\pi = \soc(\overline{G})$ and $B\pi$ have coprime orders. This means that $Y\pi \cap B\pi$ is trivial, but $(Y \cap B)\pi \leq Y\pi \cap B\pi$, so $(Y \cap B)\pi$ is trivial too.

We have proved the claim that $\soc(\overline{G})$ is isomorphic to a quotient of a subgroup of $S_k$.

Since $\soc(\overline{G})$ is a nontrivial normal subgroup of $\overline{G}$ and $\overline{G}$ acts primitively on $\Omega_2$, we know that $\soc(\overline{G})$ acts transitively on $\Omega_2$, so in particular, $|\Omega_2|$ divides $|\!\soc(\overline{G})|$. However, $|\Omega_1|$ divides $|\Omega_2|$ and $\soc(\overline{G})$ is isomorphic to a quotient of a subgroup of $S_k$, so we conclude that $|\Omega_1|$ divides $k!$. 

We claim that there exists a prime $p$ such that $p^k$ divides $|\Omega_1|$. If $G$ has type (PA) on $\Omega_1$, then $|\Omega_1| = d^k$ where $d > 1$ divides $|S|$, so the claim holds. Otherwise, by Remark~\ref{rem:o'nan-scott}\ref{rem:o'nan-scott_degree} for example, $|S|^{k/2}$ divides $|\Omega_1|$, and since $S$ is a nonabelian simple group, there exists a prime $p$ such that $p^2$ divides $|S|$, so $p^k$ divides $|\Omega_1|$, which proves the claim. (Indeed, one can choose $p$ to be the least prime divisor of $|S|$. If $p^2$ does not divide $|S|$, then a Sylow $p$-subgroup has order $p$, so, as a consequence of Burnside's normal $p$-complement theorem (see \cite[(39.2)]{ref:Aschbacher00}), it has a normal complement, which contradicts $S$ being a nonabelian simple group.) 

The greatest power of $p$ that divides $k!$ is $p^m$ where $m = (k-s_p(k))/(p-1)$ where $s_p(k)$ is the sum of the digits in the base $p$ expansion of $k$. In particular, $m < k$, which contradicts $p^k$ dividing $|\Omega_1|$ and $|\Omega_1|$ dividing $k!$. This completes the proof.
\end{proof}

We now deduce Theorem~\ref{thm:main} from Theorem~\ref{thm:main}*.

\begin{proof}[Proof of Theorem~\ref{thm:main}]
Let $H_1$ and $H_2$ be stabilisers of points in $\Omega_1$ and $\Omega_2$, respectively. Let $M_1$ be a maximal subgroup of $G$ that contains $H_1$, and let $M_2 = H_2$, which is also maximal. Let $N_1$ and $N_2$ be the kernels of the primitive actions of $G$ on $\Delta_1 = G/M_1$ and $\Delta_2 = G/M_2$, respectively. Write $\Delta = \Delta_1 \cup \Delta_2$, noting that $|\Delta_1|$ divides $|\Delta_2|$. The kernel of the action of $G$ on $\Delta$ is $N = N_1 \cap N_2$. Write $\overline{X} = XN/N$ for $X \leq G$. An element of $\overline{G}$ that is a derangement on $\Delta$ corresponds to an element of $G$ that is a derangement on $\Delta$ and hence $\Omega$, so it suffices to prove that $\overline{G}$ has a derangement on $\Delta$. If $\overline{N}_1$ acts transitively on $\Delta_2$, then Lemma~\ref{lem:technical} implies that $G$ has a derangement. Now assume that $\overline{N}_1$ does not act transitively on $\Delta_2$. Since $\overline{G}$ acts primitively on $\Delta_2$, we know that $\overline{N}_1$ acts trivially on $\Delta_2$, but $\overline{N}_1$ acts trivially on $\Delta_1$, so $\overline{N}_1$ acts trivially on $\Delta$, so $\overline{N}_1 = 1$. Theorem~\ref{thm:main}* now implies that $\overline{G}$ has a derangement.
\end{proof}

All that remains is to prove Corollary~\ref{cor:prime_power}.

\begin{proof}[Proof of Corollary~\ref{cor:prime_power}]
Let $H_1$ and $H_2$ be stabilisers of points in $\Omega_1$ and $\Omega_2$, respectively. 
For $i \in \{1,2\}$, let $M_i$ be a maximal subgroup of $G$ containing $H_i$ and let $N_i$ be the kernel of the primitive action of $G$ on $\Delta_i = G/M_i$. As in the previous proof, $N = N_1 \cap N_2$ is the kernel of the action on $\Delta = \Delta_1 \cup \Delta_2$ and it suffices to prove that $G/N$ has a derangement on $\Delta$. By interchanging the roles of $\Omega_1$ and $\Omega_2$ if necessary, we may assume that $|\Delta_1|$ divides $|\Delta_2|$. Therefore, Theorem~\ref{thm:main} implies that $G/N$ has a derangement on $\Delta$.
\end{proof}

\section{Proof of Theorems~\ref{thm:summary} and~\ref{thm:reduction}} \label{s:reduction}

We conclude the paper by turning our attention to Theorems~\ref{thm:summary} and~\ref{thm:reduction}. The main technical result in this direction is the following, and we thank Pablo Spiga for sharing the proof of this with us.

\begin{theorem} \label{thm:reduction_technical}
Let $\C$ be a class of finite groups closed under taking normal subgroups.
Assume that Conjecture~\ref{conj:main} holds for all perfect groups in $\C$.
Then Conjecture~\ref{conj:main} holds for all groups in $\C$.
\end{theorem}

\begin{proof}
Let $G \leq \Sym(n)$ be a group in $\C$ and assume that $G$ has two orbits of size $\frac{n}{2} > 1$.
Let $H_1$ and $H_2$ be stabilisers in $G$ of points in different orbits. It suffices to prove that 
\[
G \neq \bigcup_{g \in G}H_1^g \cup \bigcup_{g \in G}H_2^g.
\]
We proceed by induction on the composition length of $G$. For the base case, assume that $G$ is simple. If $G$ is perfect, then the result holds by hypothesis. Therefore, we may assume that $G$ has prime order. This forces $H_1 = H_2 = 1$, so
\[
\bigcup_{g \in G}H_1^g \cup \bigcup_{g \in G}H_2^g = 1 < G.
\]

For the inductive step, assume that the composition length of $G$ is $c > 1$. If $G$ is perfect, then the result holds by hypothesis. Therefore, we may assume that $G$ has a maximal normal subgroup $N$ such that $G/N$ has prime order.

First assume that $G = H_1N = H_2N$. Then 
\[
|N:H_1 \cap N| = |H_1N:H_1| = |G:H_1| = |G:H_2| = |H_2N:H_2| = |N:H_2 \cap N|.
\]
The class $\C$ is closed under taking normal subgroups, so $N$ is also contained in $\C$.
Since $N$ has composition length $c-1$, by induction, 
\[
N \neq \bigcup_{g \in N}(H_1 \cap N)^g \cup \bigcup_{g \in N}(H_2 \cap N)^g = \left(\bigcup_{g \in N}H_1^g \cup \bigcup_{g \in N}H_2^g\right) \cap N.
\]
In particular, 
\[
\bigcup_{g \in N}H_1^g \cup \bigcup_{g \in N}H_2^g \neq G.
\]
Therefore, since $G = H_1N = H_2N$, we have
\[
\bigcup_{g \in G}H_1^g \cup \bigcup_{g \in G}H_2^g = \bigcup_{g \in N}H_1^g \cup \bigcup_{g \in N}H_2^g \neq G.
\]

It remains to assume that $H_1N < G$ or $H_2N < G$. If $H_1N < G$ and $H_2N < G$, then since $G/N$ has prime order, $H_1 \leq N$ and $H_2 \leq N$, so
\[
\bigcup_{g \in G}H_1^g \cup \bigcup_{g \in G}H_2^g \leq N < G.
\]
Therefore, without loss of generality, we may assume that $H_1N < G$ and $H_2N = G$. For a contradiction, suppose that 
\[
G = \bigcup_{g \in G}H_1^g \cup \bigcup_{g \in G}H_2^g.
\]
Since $G/N$ has prime order, $H_1 \leq N$, which implies that
\[
G = N \cup \bigcup_{g \in G}H_2^g.
\]
In particular, every element of $G$ that is a derangement on $G/H_2$ is contained in $N$. This means that $D(G) \leq N$, where $D(G)$ is the subgroup of $G$ generated by the elements of $G$ that are derangements on $G/H_2$. In particular, $|G:N|$ also divides $|G:D(G)|$, and, as noted in \cite[Corollary~1.2]{ref:BaileyCameronGiudiciRoyle21}, $|G:D(G)|$ divides $|G:H_2|-1$, so $|G:N|$ divides $|G:H_2|-1$. However, $H_1 \leq N$, so $|G:N|$ divides $|G:H_1| = |G:H_2|$, which is a contradiction.
\end{proof}

We record two immediate consequences of Theorem~\ref{thm:reduction_technical}.

\begin{corollary} \label{cor:soluble}
Conjecture~\ref{conj:main} holds for soluble groups.
\end{corollary}

\begin{proof}
Apply Theorem~\ref{thm:reduction_technical} with $\C$ as the class of soluble groups, noting that there are no nontrivial perfect soluble groups.
\end{proof}

\begin{corollary} \label{cor:reduction}
For all integers $m \geq 1$, if Conjecture~\ref{conj:main} holds for all perfect groups of order at most $m$, then Conjecture~\ref{conj:main} holds for all groups of order at most $m$. In particular, to prove Conjecture~\ref{conj:main}, it suffices to assume that $G$ is perfect.
\end{corollary}

\begin{proof}
Apply Theorem~\ref{thm:reduction_technical} with $\C$ as the class of finite groups of order at most $m$.
\end{proof}

Theorem~\ref{thm:reduction} is an immediate consequence of Corollary~\ref{cor:reduction}. It remains to prove Theorem~\ref{thm:summary}.

\begin{proof}[Proof of Theorem~\ref{thm:summary}]
Part~(i) is Corollary~\ref{cor:soluble}. Applying Theorem~\ref{thm:reduction_technical} with $\C$ as the class of almost simple groups reduces part~(ii) to the case where $G$ is simple (since the outer automorphism group of a simple group is soluble), and Conjecture~\ref{conj:main} holds for simple groups by Remark~\ref{rem:as}. Applying, Corollary~\ref{cor:reduction} reduces part~(iii) to the perfect groups of order at most 50000, but \textsc{Magma} \cite{ref:Magma} has a database of perfect groups of order at most 50000 and a straightforward computation verifies that Conjecture~\ref{conj:main} holds for all of them (our code is given in \cite{ref:HarperGH2024}).
\end{proof}

\vspace{11pt}

\begin{multicols}{2}
\noindent David Ellis \newline
School of Mathematics \newline
University of Bristol \newline
Bristol, BS8 1UG, UK \newline
\texttt{david.ellis@bristol.ac.uk}

\noindent Scott Harper \newline
School of Mathematics  \newline
University of Birmingham \newline
Birmingham, B15 2TT, UK \newline
\texttt{s.harper.3@bham.ac.uk}
\end{multicols}

\end{document}